\documentclass{siamart220329}
\usepackage{mathrsfs}
\usepackage{enumerate}
\usepackage{amssymb,amsmath,amsfonts}
\usepackage{graphicx,subfigure,epstopdf}
\usepackage{color}
\usepackage{multirow}
\usepackage{booktabs}
\usepackage{tikz}
\usepackage{cleveref}
\usepackage[text={145mm, 216mm}, asymmetric]{geometry}

\allowdisplaybreaks

\usepackage[draft]{changes}
\definechangesauthor[name=YANG Zongze, color=green]{ZZ}
\definechangesauthor[name=DUAN Beiping, color=blue]{BP}
\let\oldsout\sout
\renewcommand{\sout}[1]{\ifmmode\text{\oldsout{\ensuremath{#1}}}\else\oldsout{#1}\fi}

\definecolor{darkred}{rgb}{.7,0,0}

\definecolor{green}{rgb}{0,0.7,0}

\newtheorem{remark}{Remark}[section]

\def\cC{\mathcal{C}}
\def\cA{\mathcal{A}}
\def\bR{\mathbb{R}}
\def\i{\mathrm{i}}
\def\cI{\mathcal{I}}

\def\cB{\mathcal{B}}

\def\cS{\mathcal{S}}
\def\wrho{\widetilde{\rho}}

\def\mbC{\mathbf{C}}
\def\mba{\mathbf{a}}
\def\wg{\widetilde{g}}
\def\IM{\textup{\texttt{Im}}}
\def\RE{\textup{\texttt{Re}}}

\def\wsigma{\widetilde{\sigma}}

\newcommand{\vertiii}[1]
{{\left\vert\kern-0.25ex\left
\vert\kern-0.25ex\left\vert #1
    \right\vert\kern-0.25ex\right
\vert\kern-0.25ex\right\vert}}

\newtheorem{assumption}{\textsc{Assumption}}

\graphicspath{{./figures/}}

\headers{approximation for nonlocal diffusion equations}{B. Duan and Z. Yang}
\title{A quadrature scheme for steady-state diffusion equations 
    involving fractional power of regularly accretive operator\thanks{
        Submitted to the editors \today.
    }
}

\author{Beiping Duan\thanks{Faculty of Computational Mathematics and Cybernetics, 
    Shenzhen MSU-BIT University, Shenzhen  518172, P.R. China
    (\email {duanbeiping@smbu.edu.cn}).}
\and Zongze Yang\thanks{Corresponding author.
	 Department of Applied Mathematics, 
    The Hong Kong Polytechnic University, Kowloon, Hong Kong
    (\email{zongze.yang@polyu.edu.hk}).}
}

\begin{document}
	
\maketitle

\begin{abstract}
In this paper we construct a quadrature scheme to numerically solve the nonlocal diffusion equation $(\cA^\alpha+b\cI)u=f$ with $\cA^\alpha$ the $\alpha$-th power of the regularly accretive operator $\cA$.  Rigorous error analysis is carried out and sharp error bounds (up to some negligible constants) are obtained. The error estimates include a wide range of cases in which the regularity index and spectral angle of $\cA$, the smoothness of $f$, the size of $b$ and $\alpha$ are all involved. The quadrature scheme is exponentially convergent with respect to the step size and is root-exponentially convergent with respect to the number of solves. Some numerical tests are presented in the  last section to verify the sharpness of our estimates. Furthermore, both the scheme and the error bounds can be utilized directly to  solve and analyze time-dependent problems.
\end{abstract}

\begin{MSCcodes}
65N12, 65R20, 65N30, 65N50, 35S15
\end{MSCcodes}

\begin{keywords}
fractional powers of regularly accretive operators, quadrature scheme, nonlocal diffusion equations
\end{keywords}

\section{Introduction}\label{Se:1}

\subsection{Motivation and problem formulation}
Mathematical investigations for equations involving nonlocal operators have
received much attention  due to their wide range of applications, see  e.g.,
\cite{constantin1999behavior,bakunin2008turbulence,eringen2002nonlocal,gilboa2007nonlocal,gilboa2008nonlocal,duvant2012inequalities,mikki2020theory},
and we also recommend the nice review paper \cite{d2020numerical} to the
interested readers. Two most commonly used nonlocal operators in space are
integral fractional Laplacian and fractional powers of elliptic operators, where
the former can be derived from the Fourier representation and the latter 
originates from functional calculus. Numerical approaches for both kinds
of nonlocal operators are attracting more and more attention and many
numerical methods have been proposed and developed, such as the mapped
Chebyshev spectral-Galerkin method  \cite{sheng2020fast}, spectral methods
\cite{zhang2019error}, finite element methods and adaptive finite element
methods \cite{acosta2017fractional,bonito2019numerical,ainsworth2017aspects},
and the hybrid finite element-spectral method  \cite{ainsworth2018hybrid}
et al. For numerical approximation for fractional powers of elliptic operators,
most of the research papers focus on transplanting the nonlocal target problem
into local systems, for example, by applying Gauss-Jacobi quadrature or
trapezoidal rule to corresponding integral representations
\cite{aceto2019rational,bonito2015numerical}, by Vabishchevich's idea
\cite{Vabishchevich15,DLP-Pade,duan1}, by the best uniform rational approximation
\cite{Harizanov-JCP,harizanov2018optimal}, and by the Caffarelli-Silvestre
extension \cite{nochetto2015pde,chen2020efficient}, et al. Interested readers
may refer to the comprehensive review paper  \cite{harizanov2020survey}. 

We begin with the definition of the fractional power of a second-order elliptic operator in a bounded
domain. Let $\Omega$ denote a bounded polygonal domain in $\bR^d(d\ge 1)$ with a Lipschitz continuous boundary $\Gamma=\partial \Omega$ and suppose $\Gamma=\Gamma_D\cup \Gamma_N$ and $\Gamma_D\cap \Gamma_N=\emptyset$. We further suppose $\Gamma_D$ is nonempty and does not contain any measure whose dimension is less than $d-1$. To avoid complicated discussions for inhomogeneous boundary conditions, we only consider homogeneous Dirichlet and Neumann boundary conditions, correspondingly on $\Gamma_D$ and $\Gamma_N$. We use $V\subset H^1(\Omega)$ to represent the function set equipped with $H^1(\Omega)$-norm in which the traces of the functions vanish on $\Gamma_D$. 

	We introduce a sesquilinear form(see the definition in the next section) $A(\cdot,\cdot)$: 
	\begin{equation}\label{eqn-sesquilinear}
		A(w,v)=\int_{\Omega}\nabla w\, \mbC (x)(\overline{\nabla v})^T + (\mba(x)\cdot\nabla) w \overline{v} +r(x)w\overline{v}\,dx\qquad \forall w,v\in V,
	\end{equation}
	where $\mbC$ is a  complex-valued  $d\times d$  matrix, $\mba(x)$ is a complex-valued $d$-dimensional vector and $r(x)$ is a complex-valued scalar function. We further assume that the sesquilinear form $A(\cdot,\cdot)$ satisfies strong ellipticity
	\begin{equation}\label{prop-e}
		\Re (A(v,v))\ge c_0\|v\|_{V}^2 \quad \mbox{for all } v\in V,
	\end{equation}
	and continuity
	\begin{equation}\label{prop-c}
		|A(w,v)|\le c_1\|w\|_{V}\|v\|_{V} \quad \mbox{for all } w,v \in V,
	\end{equation}
	where $c_0,c_1$ are positive real numbers. Note that the two inequalities above also imply $c_0\le c_1$.
	
	Let $V_a^*$ denote the  set of all antilinear functionals on $V$, then  by the Lax-Milgram theorem we know for any $G_a\in V_a^*$ there exists a unique $w=T G_a\in V$ with $T:V_a^*\rightarrow V$ such that for any $v\in V$ it holds $A(T G_a,v)=G_a(v)$. Let $\cA=T^{-1}$ denote the second order operator associated with the sesquilinear form $A(\cdot,\cdot)$.

Most work related to the fractional powers of elliptic operators focuses on
solving $\cA^\alpha u=f$ or equivalently $u=\cA^{-\alpha}f$ with $\cA$ a
self-adjoint operator, and to our best knowledge,
\cite{bonito2016numerical} and \cite{bonito2019sinc} are the only
papers to consider non-Hermitian cases, which revisited the error estimates for
the Bonoti-Pasciak quadrature scheme proposed in \cite{bonito2015numerical} when
$\cA$ is regularly accretive. 

In this paper we investigate numerical approaches to solve the
 following nonlocal diffusion equation:
\begin{equation}\label{eqn-p1}
    (\cA^\alpha +b\cI)u=f, \quad \mbox{with }u|_{x\in\Gamma_D}=0,\;\frac{\partial u}{\partial n}\Big|_{x\in \Gamma_N}=0,
\end{equation} 
where $\alpha\in (0,1)$, $b>0$, 
and $\cA^\alpha$ denotes the $\alpha$-th power of $\cA$, defined by the Dunford-Taylor integral formula
\begin{equation}\label{Dunford-Taylor}
    \cA^\alpha=\frac{1}{2\pi \i}\int_{\cC'}z^\alpha(z\cI -\cA)^{-1}dz,
\end{equation}
with $\cC'$ an integral contour in complex plane surrounding the spectrum of $\cA$. Without loss of generality,  we assume $b\in[1,+\infty)$  hereafter to avoid introducing more tedious discussions.

Apart from purely mathematical interest,
\eqref{eqn-p1} can be obtained from time discretization of parabolic equations
involving $\cA^\alpha$. For example, when we apply linearized backward Euler
scheme to the model problem
\begin{equation}\label{super-diffusion}
    \frac{\partial w(x,t)}{\partial t} +\cA^\alpha w(x,t)=g(x,t,w),
\end{equation}
we obtain
\begin{equation}\label{back-Euler}
    \left(\cA^\alpha+\frac{1}{\Delta t}\right)w^{n+1}(x)=g(x,t^{n+1},w^n)+\frac{w^{n}(x)}{\Delta t}
\end{equation}
with $\Delta t$ the temporal step size and $w^{n+1}(x)$ the semidiscrete solution at $t_{n+1}=(n+1)\Delta t$. It is easy to show that \eqref{back-Euler} is unconditionally stable for $g(x,t,w)=g(x,t)$, see \cite{petr2016numerical}.  So at each time level the target system has the same formulation as \eqref{eqn-p1} with $b=\Delta t^{-1}$. Compared with the case of $b\equiv 0$, fewer papers study numerical approaches for \eqref{eqn-p1}. Methods based on the  Caffarelli–Silvestre extension and Vabishchevich's idea can not be applied directly any more even if $\cA$ is Hermitian. In \cite{harizanov2020numerical} the authors proposed a rational approximation scheme to solve the discrete counterpart of \eqref{eqn-p1} for a Hermitian $\cA$. Two approaches are constructed in their paper that  are based on the best diagonal  rational approximations for $\xi^\alpha$ and $\frac{1}{\xi^{-\alpha}+b}$ on $\xi\in[0,1]$, respectively. In \cite{burrage2012efficient}, several approaches were employed to solving \eqref{super-diffusion} for $\alpha\in(\frac{1}{2},1]$. The approaches are based on the contour integral method proposed in \cite{hale2008computing}, and rational Krylov subspace-based techniques.

\subsection{Our approach}
In this paper, we construct the following integral approach to evaluate the inverse of $\cB^\alpha+t\cI$ for $t\ge 0$
\begin{equation}\label{int-formula-A}
        (\cB^\alpha +t\cI)^{-1}=	\frac{\sin\pi\alpha}{\alpha\pi}\int_{-\infty}^\infty\frac{(1+e^{-s/\alpha}\cB)^{-1}}{e^{s}+2t\cos\pi\alpha +t^2e^{-s}}ds,
\end{equation} 
where $\cB$ can be any operator satisfying some loose restrictions. We first approximate $\cA$ by $\cA_h$, which in practice can be obtained by a finite difference method, finite element method or other numerical schemes. The  `semidiscrete' solution $u_h=(\cA_h^\alpha +b\cI)^{-1}f_h$ then can be represented by \eqref{int-formula-A} and the `fully discrete' solution $U_h$ follows from applying the trapezoidal rule to the integral representation \eqref{int-formula-A} and cutting the summation to finite terms
\begin{equation}\label{quadrature-Uh}
    U_h:=U_{h,\tau}^{M,N}=\frac{\sin \pi \alpha}{\alpha\pi}\tau\sum_{n=-M}^{N}\frac{(1+e^{-n\tau/\alpha}\cA_h)^{-1}f_h}{e^{n\tau}+2b\cos\pi\alpha +b^2e^{-n\tau}},
\end{equation}
where $\tau$ is the quadrature step size. 

We give rigorous analysis for this scheme, and error bounds for $\|u-u_h\|_{L^2(\Omega)}$ and $\|u_h-U_h\|_{L^2(\Omega)}$ are demonstrated. In the two main theorems of this paper -- \Cref{theorem-space} and \Cref{theorem-quadrature}, the parameters appeared in the target equation \eqref{eqn-p1} including $\alpha,b$, the smoothness of $f$, and the elliptic regularity index and the spectral angle of $\cA$, are all taken into consideration. 

The outline of this paper is as follows. In Section 2 we introduce some basic concepts of accretive operators and give the integral representation of $(\cB^\alpha+t\cI)^{-1}$. In Section 3 we present the error estimates for $\|u-u_h\|_{L^2(\Omega)}$.  The error bound for $\|u_h-U_h\|_{L^2(\Omega)}$ is given in Section 4  that shows that it is exponentially decaying with respect to the quadrature step size $\tau$, and is root-exponentially convergent with respect to the number of solves  under optimal choice of $M, N$ and $\tau$. In Section 5 some numerical examples are presented to confirm our theoretical analysis.

\section{Preliminary and integral approach}\label{sec:pre}
We first introduce some basic concepts related to our topic, which can be found in \cite{kato1961}. For the readers' convenience we list them below.

Let $X$ be a Hilbert space. A complex-valued function $\Psi(v,w)$ defined for $v,w$ belonging to a linear subset of $X$ is called a \textit{sesquilinear} form if it is linear in $v$ and semilinear in $w$. We use $D(\Psi)$ to denote the domain of $\Psi$. For a given sesquilinear form $\Psi(\cdot,\cdot)$, $\Psi^*(v,w)=\overline{\Psi(w,v)}$ defines another sesquilinear form $\Psi^*$ with $D(\Psi^*)=D(\Psi)$. We say $\Psi^*$ is the {\textit{adjoint form}} of $\Psi$. $\Psi$ is said to be Hermitian or symmetric if $\Psi=\Psi^*$.

A Hermitian form $\Phi$ is said to be {\textit{nonnegative}} if $\Phi(v,v)\ge 0$ for all $v\in D(\Phi)$. A nonnegative  Hermitian form $\Phi$ is said to be {\textit{closed}} if $v_n\in D(\Phi)$, $v_n\rightarrow v\in X$ and $\Phi(v_n-v_m,v_n-v_m)\rightarrow 0$ is sufficient to derive $v\in D(\Phi)$ and $\Phi(v_n,v_n)\rightarrow \Phi(v,v)$. 

For a given sesquilinear form, one can decompose it into two Hermitian forms by
\begin{equation*}
    \Psi=\frac{1}{2}\left(\Psi+\Psi^*\right)+\i\frac{1}{2\i}\left(\Psi-\Psi^*\right):=\Psi_{\RE}+\i\Psi_{\IM},
\end{equation*}
which are referred as the {\textit{real}} and {\textit{imaginary}} parts of $\Psi$, respectively. It is worth to point out that $\Psi_{\RE}$ and $\Psi_{\IM}$ are not real-valued. 

\begin{definition}\label{def-regular}
 A sesquilinear form $\Psi(\cdot,\cdot)$ will be said to be {\rm regular} if 
    \begin{enumerate}[1)]
        \item the domain of $\Psi(\cdot,\cdot)$ is dense in $X$;
        \item $\Psi_{\RE}$ is a closed, nonnegative Hermitian form;
        \item there exists some $\widetilde\beta\ge 0$ such that $|\Psi_{\IM}(v,v)|\le \widetilde\beta \Psi_{\RE}(v,v)$ holds for $\forall v\in D(\Psi)$. 
    \end{enumerate}

An operator will be said to be {\rm regular} if it is associated with a regular sesquilinear form. The smallest number $\widetilde\beta$ is called {\rm the index of $\Psi$} or {\rm the index of the associated operator}.
\end{definition}

An operator $\cB$ is said to be \textit{accretive} if $\Re(\cB v,v)\ge 0$ for $\forall v\in D(\cB)$. Further, we say $\cB$ is \textit{maximal accretive} if $\lambda+\cB$ is surjective for $\lambda\ge 0$.

Obviously, the sesquilinear form $A(\cdot,\cdot)$ satisfies 1) and 2) in Definition \eqref{def-regular} with $X=L^2(\Omega)$. In addition,  \eqref{prop-e} and \eqref{prop-c} imply 
\begin{equation*}
    \left|A_{\IM}(v,v)\right| \le \frac{\sqrt{c_1^2-c_0^2}}{c_0}A_{\RE}(v,v).
\end{equation*}
So $\cA$ is maximal regularly accretive. Hereafter we use $\beta$ to denote the index of $\cA$ and $\|\cdot\|$ to represent the $L^2(\Omega)$-norm.

\begin{lemma}\label{lemma-integral}
    Suppose $\cB:D(\cB) \rightarrow X$ is an operator on the Banach space $X$ and its resolvent  contains $\Sigma_{\omega}\cup \{z: |z|< \lambda_0\}$, where $\Sigma_\omega=\{z\in \mathbb{C}: \pi-|\arg z|<\omega\}$ 
    with constant $\lambda_0 > 0$ and $\omega \in (0,\pi)$. Then the following integral formula holds for $\alpha\in(0,1)$ and $t\ge 0$
    \begin{equation}\label{int_formula}
        (\cB^\alpha +t\cI)^{-1}=\frac{\sin\pi\alpha}{\pi}\int_{0}^{+\infty}(\rho+\cB)^{-1}\frac{\rho^\alpha}{\rho^{2\alpha}+2t\cos\pi\alpha \rho^\alpha+t^2}d\rho.
    \end{equation}
\end{lemma}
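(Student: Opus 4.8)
The plan is to reduce the operator identity \eqref{int_formula} to an elementary scalar jump computation carried out inside the holomorphic functional calculus of $\cB$, by deforming the defining Dunford--Taylor contour \eqref{Dunford-Taylor} onto the branch cut of $z\mapsto z^\alpha$. First I would check that $\cB^\alpha+t\cI$ is boundedly invertible so that the left-hand side makes sense: the hypothesis $\rho(\cB)\supseteq\Sigma_\omega\cup\{z:|z|<\lambda_0\}$ forces $\sigma(\cB)\subset\{z:|\arg z|\le\pi-\omega\}$, whence $z\mapsto z^\alpha$ maps a neighborhood of $\sigma(\cB)$ into the sector $\{w:|\arg w|\le\alpha(\pi-\omega)\}$; since $\alpha<1$ this sector excludes the ray $\arg w=\pi$, so $z^\alpha+t\ne0$ there for every $t\ge0$. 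Consequently $g(z):=(z^\alpha+t)^{-1}$ is holomorphic on a neighborhood of $\sigma(\cB)$ with $g(z)=O(|z|^{-\alpha})$ at infinity, and the composition rule of the functional calculus yields $(\cB^\alpha+t\cI)^{-1}=g(\cB)=\frac{1}{2\pi\i}\int_{\cC}(z^\alpha+t)^{-1}(z\cI-\cB)^{-1}\,dz$ for an admissible sectorial contour $\cC$ enclosing $\sigma(\cB)$ and avoiding $(-\infty,0]$.

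Next I would establish that the right-hand side of \eqref{int_formula} converges as a bounded operator and that $\cC$ may be collapsed onto the two banks of the negative real axis. The key point supplied by the hypothesis is that the whole ray $(-\infty,0]$ lies in $\rho(\cB)$ (it is covered by $\Sigma_\omega$ for $|z|\ge\lambda_0$ and by the disk near $0$), so the resolvent $(z\cI-\cB)^{-1}$ is analytic in a neighborhood of the cut and the only singularity of the integrand across it is the branch point of $z^\alpha$. Using the sectorial resolvent bound $\|(z\cI-\cB)^{-1}\|\le C|z|^{-1}$ together with $g(z)=O(|z|^{-\alpha})$, the arc at infinity contributes $O(|z|^{-\alpha})\to0$, and the small arc of radius $r$ around the origin contributes $O(r^{1-\alpha})\to0$ (here $\alpha<1$ is used; for $t>0$, $g$ is even bounded near $0$). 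The same estimates show the resulting improper $\rho$-integral converges, since near $\rho=\infty$ its integrand behaves like $\rho^{-1-\alpha}$ and near $\rho=0$ like $\rho^{\alpha}$.

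It then remains to compute the jump of the integrand across the cut. On the upper and lower banks $z=\rho e^{\pm\i\pi}$ one has $z^\alpha=\rho^\alpha e^{\pm\i\pi\alpha}$ while $(z\cI-\cB)^{-1}=-(\rho+\cB)^{-1}$ is single-valued. Writing $g_\mp=(\rho^\alpha e^{\mp\i\pi\alpha}+t)^{-1}$, I would factor the common denominator as $(\rho^\alpha+te^{\i\pi\alpha})(\rho^\alpha+te^{-\i\pi\alpha})=\rho^{2\alpha}+2t\cos\pi\alpha\,\rho^\alpha+t^2$ and compute the numerator $\rho^\alpha(e^{\i\pi\alpha}-e^{-\i\pi\alpha})=2\i\sin\pi\alpha\,\rho^\alpha$, so that $g_--g_+=\dfrac{2\i\sin\pi\alpha\,\rho^\alpha}{\rho^{2\alpha}+2t\cos\pi\alpha\,\rho^\alpha+t^2}$. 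Combining this jump with the prefactor $\frac{1}{2\pi\i}$ and the orientation of the collapsed contour produces exactly $\frac{\sin\pi\alpha}{\pi}\int_0^\infty(\rho+\cB)^{-1}\frac{\rho^\alpha}{\rho^{2\alpha}+2t\cos\pi\alpha\,\rho^\alpha+t^2}\,d\rho$, which is \eqref{int_formula}; the substitution $\rho=e^{s/\alpha}$ then turns it into the form \eqref{int-formula-A} used in the scheme, with the extra factor $1/\alpha$ absorbed into $\sin\pi\alpha/(\alpha\pi)$.

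The scalar algebra of the jump is routine, so I expect the main obstacle to be the rigorous justification of the contour surgery: securing the sectorial resolvent estimate $\|(z\cI-\cB)^{-1}\|\le C|z|^{-1}$ from the stated hypotheses, verifying that the arcs at $0$ and $\infty$ truly vanish (uniformly in the two banks), and keeping the orientation and signs consistent so that the difference $g_--g_+$, and not $g_+-g_-$, appears with the correct overall sign. An alternative would be to prove the scalar identity for $(\lambda^\alpha+t)^{-1}$ first and then lift it by inserting the Dunford--Taylor formula for $\cB^\alpha$ and applying Fubini, but that route incurs an interchange-of-integration argument for operator-valued integrals that the resolvent-contour approach avoids entirely.
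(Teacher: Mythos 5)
Your proposal is correct and follows essentially the same route as the paper: start from the Cauchy/Dunford--Taylor representation of $(\cB^\alpha+t\cI)^{-1}$, collapse the keyhole contour onto the negative real axis, compute the jump $g_--g_+=2\i\sin(\pi\alpha)\,\rho^\alpha/(\rho^{2\alpha}+2t\cos\pi\alpha\,\rho^\alpha+t^2)$, and show the small circle of radius $\delta$ contributes $O(\delta^{1-\alpha})$ (or $O(\delta/t)$ for $t>0$) and vanishes. The only cosmetic difference is that you discuss the arc at infinity and the sectorial bound $\|(z\cI-\cB)^{-1}\|\le C|z|^{-1}$ explicitly, whereas the paper takes the two rays directly to infinity and leaves the integrability at $\rho=\infty$ implicit.
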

\begin{proof}
    We cut the $z$-plane along the negative real line then by the Cauchy integral formula we know
    \begin{equation}\label{int-ori}
        (\cB^\alpha +t\cI)^{-1}=\frac{1}{2\pi \i}\int_{\cC}(z^\alpha +t)^{-1}(z\cI -\cB)^{-1}dz,
    \end{equation}
    where $\cC$ is an integral contour in the complex plane surrounding the spectrum of $\cB$ and avoiding the branch point $z=0$.
    \begin{figure}
        \centering
        \begin{tikzpicture}[scale=0.5]
            \draw [-stealth](-4,0) -- (4,0)node[right]{Re};
            \draw [-stealth](0,-3) -- (0,3)node[above]{Im};
            \draw [-stealth](-3.6,3) -- (-1.8,1.5);
            \draw (-1.8,1.5) -- (-0.36,0.3);
            \draw [-stealth](-0.36,-0.3) -- (-1.8,-1.5);		
            \draw(-3.6,-3) -- (-1.8,-1.5);
            \draw (-0.36,-0.3) arc(219.8:500.2:0.4686);
            \draw [->](0.4686,0.01) -- (0.4686,0);
            \node at (-1.6,2.1) {$\cC_1$};
            \node at (0.7,0.7) {$\cC_2$};
            \node at (-1.6,-2.1) {$\cC_3$};
        \end{tikzpicture}
        \caption{Integral curve on the complex plane.} \label{fig:IntCur}
    \end{figure}
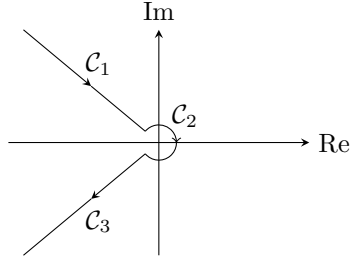
    We choose $\cC=\cC_1\cup\cC_2\cup\cC_3$, as shown in  \Cref{fig:IntCur}, where $\cC_1=\rho e^{\kappa\pi \i}$ and $\cC_3=\rho e^{-\kappa\pi \i}$ with $\rho\in[\delta,+\infty]$, and $\cC_2=\delta e^{\theta \i}$ with $\theta\in [-\kappa\pi,\kappa\pi]$, $\kappa\in (\frac{\pi-\omega}{\pi},1]$. Here $\delta$ is chosen small enough such that the spectrum lies on the right side of $\cC$. Set $\kappa=1$, then
    \begin{equation*}
        \begin{aligned}
            (\cB^\alpha +t\cI)^{-1}=&\frac{1}{2\pi \i}\int_{\cC}(z^\alpha +t)^{-1}(z\cI -\cB)^{-1}dz\\
            =&\frac{1}{2\pi \i}\int_{+\infty}^{\delta}e^{\pi \i}(\rho^\alpha e^{\alpha\pi \i}+t)^{-1}(\rho e^{\pi \i}\cI-\cB)^{-1}d\rho\\
            &+\frac{1}{2\pi \i}\int_{\delta}^{+\infty}e^{-\pi \i}(\rho^\alpha e^{-\alpha\pi \i}+t)^{-1}(\rho e^{-\pi \i}\cI-\cB)^{-1}d\rho\\
            &+\frac{\delta}{2\pi}\int_{-\pi}^{\pi}e^{\theta \i}(\delta^\alpha e^{\alpha\theta \i}+t)^{-1}(\delta e^{\theta \i}\cI-\cB)^{-1}d\theta\\
            :=&I_1+I_3+I_2.
        \end{aligned}
    \end{equation*}
    After some manipulations one can obtain 
    \begin{equation*}
        I_1+I_3=\frac{\sin\pi\alpha}{\pi }\int_{\delta}^{+\infty}(\rho+\cB)^{-1}\frac{\rho^\alpha}{\rho^{2\alpha}+2t\cos\pi\alpha \rho^\alpha+t^2}d\rho
    \end{equation*}
    and 
    \begin{equation*}
        \|I_2\|_X\le \frac{\delta}{2\pi}\int_{-\pi}^{\pi}\left|(\delta^\alpha e^{\alpha\theta \i}+t)^{-1}\right| \|(\delta e^{\theta \i}\cI-\cB)^{-1}\|_X d\theta.
    \end{equation*}
    Since the resolvent of $\cB$ contains $|z|\le \lambda_0$  we know for $\delta\le \frac{\lambda_0}{2}$,  $\delta e^{\theta \i}\cI-\cB$ has continuous inverse, so
    \begin{equation*}
    \begin{aligned}
        \|I_2\|_X\le \frac{c\delta}{2\pi}\int_{-\pi}^{\pi}\left|(\delta^\alpha e^{\alpha\theta \i}+t)^{-1}\right| d\theta.
    \end{aligned}
    \end{equation*}	
    If $t=0$, we have $|(\delta^\alpha e^{\alpha\theta \i}+t)^{-1}|\le \delta^{-\alpha}$. Otherwise taking $\delta^\alpha\le \frac{t}{2}$ we can get its upper bound $\frac{2}{t}$. 
    Thus taking $\delta\rightarrow 0^+$ it follows $\|I_2\|_X\rightarrow 0$ then the  desired formula follows.
\end{proof}

Since $\cA$ is maximal regularly accretive, $\sigma(\cA)$ is contained in the numerical range of $\cA$,
 i.e.~\[\sigma(\cA) \subset \big\{\mu: \mu = \frac{(\cA v, v)}{\|v\|^2}, v\in V, v\ne0\big\}.\]
Thus appealing to \eqref{prop-e} we know $\cA$  satisfies the conditions in  \Cref{lemma-integral}. It is worth to point out that for $t=0$, \eqref{int_formula} reduces to the Balakrishnan formula  \cite{balakrishnan,kato1961,bonito2015numerical}.

\section{Error analysis in space}
In this section we shall demonstrate the error bound for $\|u-u_h\|$, and to this end we first introduce some Sobolev spaces that are needed in later analysis.

\subsection{Characterization of \texorpdfstring{$D(\cA^{\alpha})$}{D(A\^alpha)}}
Let $(\cdot,\cdot)$ denote the inner product in complex $L^2(\Omega)$ space and we use $\langle\cdot,\cdot \rangle$ to represent the dual pair that is linear with respect to the first variable and is antilinear with respect to the second variable. Furthermore, we denote by $V_l^*$ the linear functional space on $V$. 

Corresponding to the operator $\cA$  defined at the beginning of this paper, we introduce its dual by the following way. For any $G_l\in V_l^*$, then by the Lax-Milgram theorem it follows that there exists a unique $T^*G_l\in V$ with $T^*:V_l^*\rightarrow V$ such that $A^*(v,T^*G_l)=G_l(v)$ holds for any $v\in V$. We then define  $\cA^*=(T^*)^{-1}$. 

Appealing to \eqref{prop-e} and \eqref{prop-c} one immediately gets that $\left[A_{\RE}(w,w)\right]^{1/2}$ provides an equivalent norm on $V$. We denote by $\cS$ the self-adjoint operator associate with $A_{\RE}$.  The self-adjoint operator $\cS$ naturally induces a Hilbert space $\dot{H}^\mu$, given by $\dot{H}^{\mu}=D(\cS^{\mu/2})$.  It is known that the eigenpairs of $\cS$ is countable, and we denote them by $\{\lambda_j,\phi_j\}_{j=1}^\infty$ with $\|\phi_j\|_{L^2(\Omega)} = 1$. In terms of the eigenfunction expansion we can characterize $\dot{H}^\mu$ by 
\begin{equation*}
    \dot{H}^\mu:=D(\cS^{\mu/2})=\Bigg\{\sum_{j=1}^{\infty}c_j\phi_j: \sum_{j=1}^{\infty}|c_j|^2\lambda_j^\mu<\infty\Bigg\}.
\end{equation*}
The corresponding inner product is 
\begin{equation*}
    (w,v)_{\mu}=\sum_{j=1}^{\infty}\lambda_j^\mu (w,\phi_j)\overline{(v,\phi_j)}.
\end{equation*}
Furthermore, it is well known that $\dot{H}^1=V$ and $\dot{H}^0=L^2(\Omega)$. The antilinear and linear functionals  on $\dot{H}^\mu$, say $H^{-\mu}_a$ and $H^{-\mu}_l$ are respectively given by
\begin{equation*}
    H^{-\mu}_a=\big\{\langle v,\cdot \rangle: v\in \dot{H}^{\mu}\big\}, \quad\text{and}\quad H^{-\mu}_l=\big\{\langle \cdot,v \rangle: v\in \dot{H}^{\mu}\big\}.
\end{equation*}
Obviously we have $V_a^*=H^{-1}_a$ and $V_l^*=H^{-1}_l$. Assume \eqref{prop-e} and \eqref{prop-c} hold, then it follows for $\mu\in[0,1)$ \cite{kato1961}
\begin{equation*}
    D(\cA^{\mu/2})=D((\cA^*)^{\mu/2})=D(\cS^{\mu/2}).
\end{equation*}

Let $H^s(\Omega)(s>0)$ denote the Sobolev space of order $s$. We introduce the following spaces equipped with their natural norms:
\begin{equation*}
    \widetilde{H}^s:=\left\{
    \begin{matrix}
        \dot{H}^s  & \mbox{ for }& s\in [0,1],\\
        {H}^s(\Omega)\cap V  & \mbox{ for }& s>1.\\
    \end{matrix}
    \right.
\end{equation*}

\begin{assumption}\label{equivalent-space}
	Assume that for $s\in[0,\gamma]$ with $\gamma\in[0,1]$,  $D(\cA^{\frac{s+1}{2}}){\subset}\widetilde{H}^{1+s}$.
\end{assumption}

The case of $s=0$ is contained in the famous Kato square root problem that has been intensively studied under different boundary conditions, we refer \cite{axelsson2006kato} and the references therein for the interested readers. For the case of $s\in(0,\gamma]$, it can be demonstrated that the inclusion relation holds provided that $\gamma$ is the  elliptic regularity index of $\cA$, that is, $T$ is a bounded map from ${H}_a^{-1+s}$ into $\widetilde{H}^{1+s}$ for $s\in(0, \gamma]$, see \cite[Theorem 4.4]{bonito2016numerical}.
\subsection{Spatial error}
To apply the finite element method, we triangulate the domain $\Omega$ into finitely many simplices and we use $\mathcal{T}_h$ to represent the mesh and $h$ to denote the mesh size. We further suppose that the mesh is qusi-uniform and any simplex of the mesh with dimension less than $d$ lies on $\Gamma$ is either contained in $\Gamma_D$ or $\Gamma_N$. Let $P_1$ be the piecewise linear finite element space defined on $\mathcal{T}_h$ and $V_h=P_1\cap V$. Let $\cA_h$ denote the discrete counterpart of $\cA$ given by
\begin{equation*}
    (\cA_h w_h, v_h):=A(w_h,v_h)\quad \forall w_h,\quad u_h \in V_h.
\end{equation*}
Let $u_h=(\cA_h^\alpha + b\cI)^{-1}f_h$ be the finite element approximation for \eqref{eqn-p1} with $f_h=\pi_h f$,  where $\pi_h$ represents the $L^2$-projection mapping from $L^2(\Omega)$ onto  $V_h$ given by \[ (\pi_h f, v_h) = (f, v_h), \forall v_h \in V_h, \] then by virtue of  \Cref{lemma-integral} we know
\begin{equation}\label{u_h}
    u_h=\frac{\sin\pi\alpha}{\pi}\int_{0}^{+\infty}(\rho+\cA_h)^{-1}f_h\frac{\rho^\alpha}{\rho^{2\alpha}+2b\cos\pi\alpha \rho^\alpha+b^2}d\rho.
\end{equation}
To give the error bound for $\|u-u_h\|$  we introduce two auxiliary functions $w_\rho$ and $w_{\rho,h}$ that satisfy
\begin{equation*}
    (\rho+\cA)w_{\rho}=f \quad \mbox{ and }\quad  (\rho+\cA_h)w_{\rho,h}=f_h,
\end{equation*}
respectively. 

\begin{lemma}\label{regu-est}
    Suppose $s\in[0,1]$ and $g\in D(\cA^q)$ with $s\ge q$, then we have 
    $$\|\cA^s(\rho+\cA)^{-1}g\|\le c \wrho^{s-q-1} \|\cA^{q}g\|,$$
    where $\wrho=\max\{1,\rho\}$ and $c$ is a positive constant independent of $\rho$. The analogous inequality holds with $\cA^{*}$ replacing $\cA$ above.
    
\end{lemma}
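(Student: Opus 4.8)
The plan is to reduce the claim to a $\rho$-dependent operator-norm bound and then prove that bound by interpolating between two elementary resolvent estimates. First I would exploit that $\cA^{q}$ and the resolvent $(\rho+\cA)^{-1}$ are both functions of $\cA$ and hence commute on $D(\cA^{q})$. Writing $\sigma:=s-q\in[0,1]$ (here $0\le q\le s\le1$), for $g\in D(\cA^{q})$ one has $(\rho+\cA)^{-1}g\in D(\cA)\subset D(\cA^{s})$ and
\begin{equation*}
\cA^{s}(\rho+\cA)^{-1}g=\cA^{\sigma}(\rho+\cA)^{-1}\big(\cA^{q}g\big),
\end{equation*}
so that $\|\cA^{s}(\rho+\cA)^{-1}g\|\le \|\cA^{\sigma}(\rho+\cA)^{-1}\|\,\|\cA^{q}g\|$. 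Thus it suffices to prove the operator bound $\|\cA^{\sigma}(\rho+\cA)^{-1}\|\le c\,\wrho^{\sigma-1}$ for $\sigma\in[0,1]$ and $\rho\ge0$.

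Next I would establish the two endpoints $\sigma=0$ and $\sigma=1$. Because $\cA$ is maximal accretive its numerical range, hence $\sigma(\cA)$, lies in $\{\Re z\ge c_0\}$ by \eqref{prop-e}; in particular $-\rho\le0$ stays at distance at least $c_0$ from $\sigma(\cA)$. Combining the accretive bound $\|(\rho+\cA)^{-1}\|\le \rho^{-1}$ (valid for $\rho>0$) with the boundedness of $\cA^{-1}$ near the origin gives $\|(\rho+\cA)^{-1}\|\le c\,\wrho^{-1}$ for all $\rho\ge0$, which is the case $\sigma=0$. The case $\sigma=1$ follows from $\cA(\rho+\cA)^{-1}=\cI-\rho(\rho+\cA)^{-1}$ together with $\rho/\wrho\le1$, giving $\|\cA(\rho+\cA)^{-1}\|\le c$. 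For intermediate $\sigma$ I would interpolate: applying the moment (interpolation) inequality $\|\cA^{\sigma}x\|\le c\|\cA x\|^{\sigma}\|x\|^{1-\sigma}$, valid for a sectorial operator such as $\cA$, to $x=(\rho+\cA)^{-1}y$ yields
\begin{equation*}
\|\cA^{\sigma}(\rho+\cA)^{-1}y\|\le c\,\|\cA(\rho+\cA)^{-1}y\|^{\sigma}\,\|(\rho+\cA)^{-1}y\|^{1-\sigma}\le c\,\wrho^{\sigma-1}\|y\|,
\end{equation*}
which is exactly the desired bound. An alternative, entirely self-contained route avoids the moment inequality: represent $\cA^{\sigma}(\rho+\cA)^{-1}$ by a Dunford--Taylor contour integral over the boundary of a sector containing $\sigma(\cA)$ (as in \Cref{lemma-integral}), insert the sectorial resolvent estimate $\|(z-\cA)^{-1}\|\le c/|z|$, and reduce to the scalar integral $\int_{0}^{\infty} r^{\sigma-1}(\rho+r)^{-1}\,dr=\rho^{\sigma-1}\pi/\sin\pi\sigma$.

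The main obstacle is uniformity as $\rho\to0^{+}$. The naive scalar estimate produces the factor $\rho^{\sigma-1}$, which blows up for $\sigma<1$, whereas the claimed bound carries the truncated quantity $\wrho^{\sigma-1}$ and must stay bounded for small $\rho$. The resolution is precisely the strong ellipticity \eqref{prop-e}: it forces $\sigma(\cA)$ to be bounded away from the origin, so $(z-\cA)^{-1}$ remains bounded near $z=0$ and the small-$|z|$ part of the contour (equivalently, the small-$\rho$ regime) stays under control; concretely I would split into $\rho\ge1$ and $\rho\le1$, using sectorial decay for the former and the near-origin boundedness of $\cA^{-1}$ for the latter. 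Finally, the statement for $\cA^{*}$ is immediate: the adjoint form $A^{*}$ satisfies $A^{*}_{\RE}=A_{\RE}$ and $A^{*}_{\IM}=-A_{\IM}$, so $\cA^{*}$ is regularly accretive with the same index and the same constant $c_0$, and every estimate above transfers verbatim.
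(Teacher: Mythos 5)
Your proposal is correct and follows essentially the same route as the paper: both reduce the claim to the two endpoint bounds $\|(\rho+\cA)^{-1}\wg\|\le c\min\{1,\rho^{-1}\}\|\wg\|$ and $\|\cA(\rho+\cA)^{-1}\wg\|\le 2\|\wg\|$, then conclude by the moment inequality $\|\cA^{\sigma}x\|\le c\|\cA x\|^{\sigma}\|x\|^{1-\sigma}$. The only cosmetic differences are that the paper obtains the first endpoint by testing the variational formulation with \eqref{prop-e} rather than quoting accretivity plus boundedness of $\cA^{-1}$, and justifies the moment inequality via bounded imaginary powers and complex interpolation rather than asserting it directly for sectorial operators.
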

\begin{proof}
    Considering the auxiliary equation for $g\in D(\cA^q)$
    \begin{equation*}
        (\rho+\cA)y_\rho=\cA^q g:=\wg,
    \end{equation*}
    test it by $y_\rho$ and take the real part on both sides, then we get
    \begin{equation*}
        \rho\|y_\rho\|^2+\Re(A(y_\rho,y_\rho))=\Re((\wg,y_\rho)).
    \end{equation*}
    Appealing to \eqref{prop-e} it follows
    \begin{equation*}
        \rho\|y_\rho\|^2+c_0\|y_\rho\|^2_V\le \Re((\wg,y_\rho))\le \|\wg\| \|y_\rho\|
    \end{equation*}
    which implies 
    $$\|y_\rho\|\le \rho^{-1}\|\wg\|,\quad \mbox{and}\quad c_0\|y_\rho\|_V^2\le \|\wg\|\|y_\rho\|.$$ 
    So one can get
    \begin{equation*}
        \|y_\rho\|\le c\min\left\{1,\rho^{-1}\right\}\|\wg\|.
    \end{equation*}
    Note that 
    \begin{equation*}
        \|\cA y_\rho\|=\|\cA(\rho+\cA)^{-1}\wg\|=\|\wg-\rho(\rho+\cA)^{-1}\wg\|\le 2\|\wg\|,
    \end{equation*}
    which implies $y_\rho\in D(\cA)$. 
    Thanks to \cite[Theorem 4.29]{lunardi2009interpolation} we know the  purely imaginary power of $\cA$ is bounded from $L^2(\Omega)$ to $L^2(\Omega)$, i.e., $\|\cA^{it}\|\le e^{\pi|t|/2}$ for $t\in \mathbb{R}$. That is, we can connect the domain of the fractional powers of $\cA$ to the interpolation space obtained by the complex method, say
    \begin{equation*}
        \left[L^2(\Omega),D(\cA)\right]_{s}=D(\cA^s),\quad \mbox{for }s\in [0,1].
    \end{equation*}
    So applying the interpolation inequality we have for $\forall w\in D(\cA)$,
    \begin{equation*}
        \|\cA^s w\|\le c \|w\|_{[L^2(\Omega),D(A)]_s}\le c\|\cA w\|^s \|w\|^{1-s}, \quad s\in(0,1).
    \end{equation*}
    Combining the above estimates we obtain for $g\in D(\cA^q)$
    \begin{equation*}
        \|\cA^s(\rho+\cA)^{-1}g\|=\|\cA^{s-q}(\rho+\cA)^{-1}\cA^q g\|=\|\cA^{s-q}y_\rho\| \le c\min\left\{1,\rho^{s-q-1}\right\}\|\wg\|.
    \end{equation*}
\end{proof}

We also need the following standard estimate for the projection operator $\pi_h$:
\begin{lemma}\label{proj-err}
    For $s\in [0,1]$ and $s+\sigma\le 2$ with $\sigma\ge0$, there exists a constant $C=C(s,\sigma)$ independent of the mesh size $h$ such that
    \begin{equation*}
        \|v-\pi_h v\|_{\widetilde{H}^s} \le C h^\sigma \|v\|_{\widetilde{H}^{s+\sigma}}, \quad \forall v\in  \widetilde{H}^{s+\sigma}.
    \end{equation*}
\end{lemma}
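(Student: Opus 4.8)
The plan is to reduce the claim to four ``corner'' estimates and then recover the full two-parameter family by operator interpolation. Writing $I-\pi_h$ for the projection-error operator and setting $r=s+\sigma$, the admissible pairs $(s,r)$ fill the trapezoid $\{0\le s\le 1,\ s\le r\le 2\}$, whose vertices are $(0,0)$, $(0,2)$, $(1,1)$ and $(1,2)$. I would first establish the estimate at these four vertices. The vertex $(0,0)$ is immediate: since $\pi_h$ is the $L^2$-orthogonal projection onto $V_h$, we have $\|v-\pi_h v\|\le\|v\|$. The vertex $(0,2)$ is the classical optimal $L^2$ bound: using that $\pi_h v$ is the $L^2$-best approximation together with a standard quasi-interpolant $I_h$ (e.g.\ Scott--Zhang), $\|v-\pi_h v\|=\min_{w_h\in V_h}\|v-w_h\|\le\|v-I_h v\|\le C h^2\|v\|_{\widetilde{H}^2}$, recalling $\widetilde{H}^2=H^2(\Omega)\cap V$.

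The two remaining vertices involve the $\widetilde{H}^1=V$ norm and are where the quasi-uniformity of $\mathcal{T}_h$ enters. The vertex $(1,1)$ is the $H^1$-stability of the $L^2$-projection, $\|\pi_h v\|_V\le C\|v\|_V$, which on a quasi-uniform mesh is a known (but nontrivial) fact; it gives $\|v-\pi_h v\|_{\widetilde{H}^1}\le C\|v\|_{\widetilde{H}^1}$. The vertex $(1,2)$ then follows by the usual triangle-inequality-plus-inverse-inequality argument: inserting $I_h v$ and using the inverse inequality $\|w_h\|_V\le Ch^{-1}\|w_h\|$ valid on quasi-uniform meshes,
\[
\|v-\pi_h v\|_V\le\|v-I_h v\|_V+Ch^{-1}\|I_h v-\pi_h v\|\le\|v-I_h v\|_V+Ch^{-1}\big(\|v-I_h v\|+\|v-\pi_h v\|\big),
\]
and bounding each term by the vertex-$(0,2)$ estimate and the interpolation error yields $\|v-\pi_h v\|_{\widetilde{H}^1}\le Ch\|v\|_{\widetilde{H}^2}$.

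Finally I would propagate these to the whole trapezoid by interpolation. Interpolating the pair $(0,0)$--$(0,2)$ gives $\|v-\pi_h v\|\le Ch^{r}\|v\|_{\widetilde{H}^r}$ for every $r\in[0,2]$, and interpolating $(1,1)$--$(1,2)$ gives $\|v-\pi_h v\|_{\widetilde{H}^1}\le Ch^{r-1}\|v\|_{\widetilde{H}^r}$ for every $r\in[1,2]$; here one uses the identifications $[\dot{H}^0,\widetilde{H}^2]_{\theta}=\widetilde{H}^{2\theta}$ and $[\widetilde{H}^1,\widetilde{H}^2]_{\theta}=\widetilde{H}^{1+\theta}$, which hold because the $\dot{H}^\mu$ ($\mu\in[0,1]$) form the complex interpolation scale generated by $\cS$ and match the spaces $\widetilde{H}^s$ across $s=1$. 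To reach a general target $(s,r)$ I would interpolate these two families against each other in the error-norm index, with parameter $\theta=s$: choosing regularities $r_0\in[0,2]$ and $r_1\in[1,2]$ with $(1-s)r_0+sr_1=r$ --- always possible since the left-hand side sweeps $[s,2]\ni r$ --- the interpolated error norm is $[\widetilde{H}^0,\widetilde{H}^1]_s=\widetilde{H}^s$, the interpolated regularity is $\widetilde{H}^r$, and the combined rate exponent is $(1-s)r_0+s(r_1-1)=r-s=\sigma$, which is exactly the claim.

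The main obstacle I anticipate is not any single computation but the bookkeeping of the interpolation spaces: one must verify that the fractional-order scale $\{\widetilde{H}^\mu\}$ is genuinely an interpolation scale across the regularity threshold $s=1$, where its definition switches from $\dot{H}^s=D(\cS^{s/2})$ to $H^s(\Omega)\cap V$, so that the interpolation functor reproduces the intermediate spaces with equivalent norms. The $H^1$-stability of $\pi_h$ at vertex $(1,1)$ is the other ingredient that genuinely uses the mesh hypothesis, and I would cite it rather than reprove it.
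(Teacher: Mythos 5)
Your proposal is correct in outline but takes a genuinely different, self-contained route from the paper. The paper disposes of the case $\sigma>0$ by citing \cite[Lemma 5.1]{bonito2016numerical} outright, and handles $\sigma=0$ exactly as you do at your vertices $(0,0)$ and $(1,1)$: $L^2$- and $H^1$-stability of $\pi_h$ on a quasi-uniform mesh plus interpolation within the range $s\in[0,1]$, where the identification $[\widetilde{H}^0,\widetilde{H}^1]_s=\dot{H}^s=\widetilde{H}^s$ is immediate because the $\dot{H}^\mu$ are \emph{defined} as the spectral scale of $\cS$. Your four corner estimates are all standard and correct, and the two-parameter interpolation scheme does reproduce the full trapezoid with the right exponent $\sigma=r-s$. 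What your approach buys is transparency about which hypotheses enter where (quasi-uniformity only at the two $V$-norm vertices); what it costs is that the entire burden shifts onto the claim you flag at the end, namely that $\{\widetilde{H}^\mu\}_{\mu\in[0,2]}$ is an interpolation scale across the threshold $\mu=1$, e.g.\ $[\widetilde{H}^0,\widetilde{H}^2]_{r/2}=\widetilde{H}^{r}$ and $[\widetilde{H}^{r_0},\widetilde{H}^{r_1}]_s=\widetilde{H}^{(1-s)r_0+sr_1}$ when $r_0<1<r_1$. This is not bookkeeping: the definition of $\widetilde{H}^\mu$ switches from $D(\cS^{\mu/2})$ to $H^\mu(\Omega)\cap V$ at $\mu=1$, and on a Lipschitz polygon with mixed boundary conditions the identification $[L^2(\Omega),H^2(\Omega)\cap V]_{1/2}=V$ (needed before reiteration can take over) is a genuine result of Grisvard/Lions--Magenes type, with well-known exceptional exponents lurking nearby (the $H^{1/2}_{00}$ phenomenon). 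Since that identification is essentially the content of the cited lemma's proof, your argument is not wrong but is incomplete precisely at the point you would need to either prove or cite it; with that one reference supplied, your proof closes.
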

\begin{proof}
	See \cite[Lemma 5.1]{bonito2016numerical} for the case of $\sigma>0$. Since $\pi_h$ is stable in both $L^2(\Omega)$ and $H^1(\Omega)$, the case of $\sigma=0$ follows from interpolation. 
\end{proof}
 Appealing to \eqref{prop-e} and \eqref{prop-c} and the Galerkin orthogonality it follows
\begin{equation}\label{H1-est0}
\begin{aligned}
    &c_0\|w_\rho-w_{\rho,h}\|^2_V+\rho\|w_\rho-w_{\rho,h}\|^2\\
    &\le \Re\left(A(w_\rho-w_{\rho,h},w_\rho-w_{\rho,h})+\rho(w_\rho-w_{\rho,h},w_\rho-w_{\rho,h})\right)\\
    &=\Re\left(A(w_\rho-w_{\rho,h},w_\rho-\pi_hw_{\rho})+\rho(w_\rho-w_{\rho,h},w_\rho- \pi_h w_{\rho})\right)\\
    &\le c_1\|w_\rho-w_{\rho,h}\|_V\|w_\rho-\pi_hw_{\rho}\|_V +\rho\|w_\rho-w_{\rho,h}\|\|w_\rho-\pi_h w_{\rho}\|,
\end{aligned}
\end{equation}
which implies
\begin{equation}\label{H1-est1}
	\begin{aligned}
		\|w_\rho-w_{\rho,h}\|_V+\sqrt{\rho}\|w_\rho-w_{\rho,h}\|
		\le c\|w_\rho-\pi_hw_{\rho}\|_V +c\sqrt{\rho}\|w_\rho-\pi_h w_{\rho}\|.
	\end{aligned}
\end{equation}
Appealing to  \Cref{proj-err}, and \Cref{regu-est} we get
\begin{equation}\label{H1-est2}
	\begin{aligned}
		&\|w_\rho-w_{\rho,h}\|_V+\sqrt{\rho}\|w_\rho-w_{\rho,h}\|\le ch^{\sigma}\|w_\rho\|_{\widetilde{H}^{\sigma+1}}+c\sqrt{\rho}h^{\sigma'}\|w_\rho\|_{\widetilde{H}^{\sigma'}}\\
		&\le ch^{\sigma}\|\cA^{\frac{\sigma+1}{2}}w_\rho\|+c\sqrt{\rho}h^{\sigma'}\|\cA^{\frac{\sigma'}{2}}w_\rho\|\\
		&\le c h^{\sigma}\wrho^{\max(\frac{\sigma-\delta-1}{2},-1)}\|\cA^{\frac{\delta}{2}}f\|+c h^{\sigma'}\wrho^{\max(\frac{\sigma'-\delta-1}{2},-\frac{1}{2})}\|\cA^{\frac{\delta}{2}}f\|
	\end{aligned}
\end{equation}
where $\sigma\in [0,\gamma]$ and $\sigma'\in[0,\gamma+1]$, and the second line was obtained under Assumption \ref{equivalent-space}.
Now we turn to the $L^2$-estimate for $w_\rho-w_{\rho,h}$. By the Aubin-Nitsche trick we consider the following auxiliary equation
\begin{equation}\label{eqn-z-rho}
    \cA^*z_\rho +\rho z_{\rho}=w_\rho-w_{\rho,h}.
\end{equation}
Taking the duality and testing the obtained  equation by $w_\rho-w_{\rho,h}$ and integrating by parts we get
\begin{equation*}
    \|w_\rho-w_{\rho,h}\|^2=\left(w_\rho-w_{\rho,h},\cA^*z_\rho +\rho z_{\rho}\right)=A(w_\rho-w_{\rho,h},z_\rho)+\rho(w_\rho-w_{\rho,h},z_\rho).
\end{equation*}
By virtue of the Galerkin orthogonality and the Cauchy-Schwarz inequality, we get
\begin{equation}\label{L2-est}
\begin{aligned}
    &\|w_\rho-w_{\rho,h}\|^2
    =A(w_\rho-w_{\rho,h},z_\rho-\pi_h z_\rho)+\rho(w_\rho-w_{\rho,h},z_\rho-\pi_h z_\rho)\\
    &\le c\left(\|w_\rho-w_{\rho,h}\|_V+\sqrt{\rho}\|w_\rho-w_{\rho,h}\| \right)  \left(\|z_\rho-\pi_h z_\rho \|_V+\sqrt{\rho }\| z_\rho-\pi_h z_{\rho}\|\right).
\end{aligned}
\end{equation}
Using  \Cref{proj-err}, \Cref{regu-est} and recalling the definition of $z_\rho$ we have for $\wsigma\in [0,\gamma]$
\begin{equation}\label{error-z}
	\|z_\rho-\pi_h z_\rho\|_V+\sqrt{\rho}\|z_\rho-\pi_h z_\rho\| \le c h^{\wsigma}\wrho^{\frac{\wsigma-1}{2}}\|w_\rho - w_{\rho,h}\|,
\end{equation}
where we implicitly used Assumption \ref{equivalent-space}. Insert \eqref{H1-est2} and \eqref{error-z} into \eqref{L2-est} then for $\sigma,\wsigma\in[0,\gamma]$, $\sigma'\in[0,\gamma+1]$ we have
\begin{equation}\label{L2-est-f}
	\|w_\rho-w_{\rho,h}\|\le c\left(h^{\sigma+\wsigma}\wrho^{\max(\frac{\sigma+\wsigma-\delta}{2}-1,\frac{\wsigma-1}{2}-1)}+h^{\sigma'+\wsigma}\wrho^{\max(\frac{\sigma'+\wsigma-\delta}{2}-1,\frac{\wsigma}{2}-1)}\right)\|\cA^{\frac{\delta}{2}}f\|.
\end{equation}

Let $w_\rho^\varepsilon$ and $w_{\rho,h}^\varepsilon$ denote the solutions of the following equations
\begin{equation*}
	(\rho+\varepsilon\cA)w^\varepsilon_{\rho}=f \quad \mbox{ and }\quad  (\rho+\varepsilon\cA_h)w^\varepsilon_{\rho,h}=f_h,
\end{equation*}
then  dividing both sides of the two equations by $\varepsilon$ and utilizing \eqref{L2-est-f} we can get the following estimate:
\begin{lemma}\label{L2-est-ep}
	Suppose Assumption \ref{equivalent-space} holds and $f\in D(\cA^{\frac{\delta}{2}}) $ with $\delta\in [0,2]$, then we have
	\begin{equation}\label{est-w-rho-ep}
	\|w^\varepsilon_\rho-w^\varepsilon_{\rho,h}\|\le c \varepsilon^{-1} \left(h^{\sigma+\wsigma}\wrho^{\max(\frac{\sigma+\wsigma-\delta}{2}-1,\frac{\wsigma-1}{2}-1)}+h^{\sigma'+\wsigma}\wrho^{\max(\frac{\sigma'+\wsigma-\delta}{2}-1,\frac{\wsigma}{2}-1)}\right)\|\cA^{\frac{\delta}{2}}f\|,
	\end{equation}
	where $\sigma,\wsigma\in[0,\gamma]$, $\sigma'\in[0,\gamma+1]$ and $\wrho=\max\{1,\rho/\varepsilon\}$.
\end{lemma}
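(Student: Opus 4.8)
The plan is to deduce \eqref{est-w-rho-ep} from the already-established bound \eqref{L2-est-f} by a simple rescaling argument, which is exactly the ``dividing both sides by $\varepsilon$'' step flagged in the text. Dividing the defining equations $(\rho+\varepsilon\cA)w^\varepsilon_\rho=f$ and $(\rho+\varepsilon\cA_h)w^\varepsilon_{\rho,h}=f_h$ by $\varepsilon$, I would rewrite them as
\begin{equation*}
  \left(\frac{\rho}{\varepsilon}+\cA\right)w^\varepsilon_\rho=\frac{f}{\varepsilon}, \qquad \left(\frac{\rho}{\varepsilon}+\cA_h\right)w^\varepsilon_{\rho,h}=\frac{f_h}{\varepsilon}.
\end{equation*}
Since $\pi_h$ is linear, $f_h/\varepsilon=\pi_h(f/\varepsilon)$, so $w^\varepsilon_\rho$ and $w^\varepsilon_{\rho,h}$ satisfy precisely the same pair of continuous and discrete resolvent equations that define $w_\rho$ and $w_{\rho,h}$, but with the resolvent parameter $\rho$ replaced by $\rho/\varepsilon$ and the data $f$ replaced by $f/\varepsilon$. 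The entire derivation leading to \eqref{L2-est-f} used only that $w_\rho,w_{\rho,h}$ solve such equations for a nonnegative parameter with $L^2$-projected data, so it applies verbatim to $w^\varepsilon_\rho,w^\varepsilon_{\rho,h}$.

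Next I would apply \eqref{L2-est-f} under these two substitutions and unwind the scaling on the right-hand side. The quantity $\wrho=\max\{1,\rho\}$ appearing in \eqref{L2-est-f} becomes $\max\{1,\rho/\varepsilon\}$, which is exactly the $\wrho$ declared in the statement of the lemma; the two powers of $\wrho$ and the admissible ranges $\sigma,\wsigma\in[0,\gamma]$, $\sigma'\in[0,\gamma+1]$ are therefore inherited unchanged. The data term transforms by homogeneity as $\|\cA^{\delta/2}(f/\varepsilon)\|=\varepsilon^{-1}\|\cA^{\delta/2}f\|$, which produces the explicit prefactor $\varepsilon^{-1}$. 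Collecting these factors gives \eqref{est-w-rho-ep} directly.

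There is no genuine analytic difficulty here: the lemma is purely a change of variables $\rho\mapsto\rho/\varepsilon$, $f\mapsto f/\varepsilon$ applied to \eqref{L2-est-f}. The one point that I would verify explicitly is that the constant $c$ in \eqref{L2-est-f} is genuinely independent of both the resolvent parameter and the source term, so that the rescaling introduces no hidden $\varepsilon$-dependence beyond the displayed $\varepsilon^{-1}$. This independence traces back to \Cref{regu-est} and \Cref{proj-err}, whose constants are stated to be independent of $\rho$ and $h$ and, being linear estimates, independent of the data. Granting this, the conclusion follows immediately.
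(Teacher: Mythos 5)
Your proposal is correct and is exactly the paper's own argument: the authors likewise divide the defining equations by $\varepsilon$ to reduce to \eqref{L2-est-f} with $\rho\mapsto\rho/\varepsilon$ and $f\mapsto f/\varepsilon$, from which the $\varepsilon^{-1}$ prefactor and $\wrho=\max\{1,\rho/\varepsilon\}$ follow. Your added check that the constant in \eqref{L2-est-f} is independent of the resolvent parameter and the data is a reasonable point to make explicit, but introduces nothing beyond what the paper implicitly relies on.
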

Now we are at the position to present the estimate for $u-u_h$.
\begin{theorem}\label{theorem-space}
        Suppose Assumption \ref{equivalent-space} holds and $f\in D(\cA^{\frac{\delta}{2}})$ with $\delta\in[0,2]$. Denote $\varepsilon=b^{-1/\alpha}$ then we have
        \begin{equation*}
           \|u-u_h\|\le  c_\alpha
           \left\{
           \begin{matrix}
           	 C(\varepsilon)c_h  h^{\min(2\alpha+\delta,2\gamma)}\|\cA^{\frac{\delta}{2}}f\|, & \quad \varepsilon\ge h^2(b\le h^{-2\alpha}),\\
           	\varepsilon^{\alpha}h^{\min(\delta,2\gamma)}\|\cA^{\frac{\delta}{2}}f\|, & \quad \varepsilon< h^2(b> h^{-2\alpha}),
           \end{matrix}
           \right.
       \end{equation*}
   where 
   \begin{equation*}
   	c_h, C(\varepsilon)=\left\{
\begin{matrix}
	   	1,& \varepsilon^{\alpha+\min(0,\frac{\delta}{2}-\gamma)}, & 2\alpha+\delta> 2\gamma,\\
	1+|\ln h|,& 1, & 2\alpha+\delta\le 2\gamma,
\end{matrix}
   	\right.
   \end{equation*}  
and $c_\alpha$ is a constant depending on $\alpha$ and is uniformly bounded for $\alpha\in (0,1)$.
\end{theorem}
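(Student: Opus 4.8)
The plan is to reduce everything, via a single scaling substitution, to the fundamental estimate of \Cref{L2-est-ep}. I would start from the representation \eqref{u_h} for $u_h$ together with its exact counterpart $u=(\cA^\alpha+b\cI)^{-1}f$ furnished by \Cref{lemma-integral} with $t=b$, and substitute $\rho=\eta/\varepsilon$ with $\varepsilon=b^{-1/\alpha}$ in both. Since $\rho^\alpha=b\,\eta^\alpha$ and $(\rho+\cA)^{-1}=\varepsilon(\eta+\varepsilon\cA)^{-1}$, the weight collapses to the normalized kernel
\[
K_\alpha(\eta):=\frac{\eta^\alpha}{\eta^{2\alpha}+2\cos\pi\alpha\,\eta^\alpha+1},
\]
a factor $\varepsilon^\alpha$ is pulled out front, and one obtains
\[
u-u_h=\frac{\sin\pi\alpha}{\pi}\,\varepsilon^\alpha\int_0^\infty\bigl(w_\eta^\varepsilon-w_{\eta,h}^\varepsilon\bigr)K_\alpha(\eta)\,d\eta,
\]
where $w_\eta^\varepsilon=(\eta+\varepsilon\cA)^{-1}f$ and $w_{\eta,h}^\varepsilon=(\eta+\varepsilon\cAh)^{-1}f_h$ are precisely the functions bounded in \Cref{L2-est-ep}. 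Moving the $L^2$-norm inside the integral isolates the two quantities to control: the $\alpha$-dependent prefactor and a family of weighted integrals of $K_\alpha$.

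Next I would insert the bound of \Cref{L2-est-ep}, which contributes a factor $c\,\varepsilon^{-1}\|\cA^{\delta/2}f\|$ together with two terms of the shape $h^{\sigma+\wsigma}\wrho^{\,p}$ and $h^{\sigma'+\wsigma}\wrho^{\,p'}$, where now $\wrho=\max\{1,\eta/\varepsilon\}$ and $p,p'$ are the exponents produced by the maxima in that lemma. The task thus becomes estimating integrals of the form
\[
\varepsilon^{\alpha-1}\int_0^\infty\max\{1,\eta/\varepsilon\}^{\,p}\,K_\alpha(\eta)\,d\eta .
\]
The structural facts I would use are that $\eta^{2\alpha}+2\cos\pi\alpha\,\eta^\alpha+1=(\eta^\alpha+\cos\pi\alpha)^2+\sin^2\pi\alpha\ge\sin^2\pi\alpha$, so that $K_\alpha(\eta)\simeq\eta^{\alpha}$ as $\eta\to0^+$ and $K_\alpha(\eta)\simeq\eta^{-\alpha}$ as $\eta\to\infty$. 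I would then split the integral at $\eta=\varepsilon$: on $[0,\varepsilon]$ one has $\wrho=1$ and the integrand is simply $K_\alpha$, whereas on $[\varepsilon,\infty)$ one has $\wrho=\varepsilon^{-1}\eta$, turning the integrand into a pure power $\varepsilon^{-p}\eta^{p}K_\alpha(\eta)$ whose convergence at infinity requires $p<\alpha-1$ and whose behaviour at the splitting point is governed by the sign of $p+\alpha+1$.

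The heart of the argument is to choose the free parameters $\sigma,\wsigma\in[0,\gamma]$ and $\sigma'\in[0,\gamma+1]$ so as to maximise the $h$-exponent while keeping the $\eta$-integral finite. Writing the tail contribution in the form $\varepsilon^{\alpha+\delta/2}\bigl(h^2/\varepsilon\bigr)^{\theta/2}$ with $\theta=\sigma+\wsigma$ makes the threshold $\varepsilon=h^2$ (i.e.\ $b=h^{-2\alpha}$) transparent: when $\varepsilon\ge h^2$ the base $h^2/\varepsilon\le1$ favours $\theta$ as large as possible, up to $\min(2\alpha+\delta,2\gamma)$, which yields the spatial rate $h^{\min(2\alpha+\delta,2\gamma)}$; when $\varepsilon<h^2$ the opposite balance is optimal and the estimate degenerates to $\varepsilon^\alpha h^{\min(\delta,2\gamma)}$. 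Pushing $\theta$ to its integrability limit $2\alpha+\delta$ is admissible exactly when $2\alpha+\delta\le2\gamma$; at that critical balance the tail integral is only logarithmically divergent and is cut off by the lower limit, producing the factor $c_h=1+|\ln h|$. When instead $2\alpha+\delta>2\gamma$ the regularity ceiling $\theta=2\gamma$ is reached first, the integral converges, and the surplus weight records the extra power $C(\varepsilon)=\varepsilon^{\alpha+\min(0,\delta/2-\gamma)}$, where the $\min$ reflects the swap between the two candidate exponents inside the maxima of \Cref{L2-est-ep} (the second candidate governing once $\delta$ exceeds $\sigma+1$).

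The main obstacle is the bookkeeping of this optimisation: each of the two terms carries two candidate exponents inside a $\max$, and the sharp bound is the envelope of all the resulting cases across both integration regions, so establishing the stated piecewise form of $c_h$ and $C(\varepsilon)$ demands a careful case analysis of the signs of $p+\alpha+1$, $p-\alpha+1$, and of $2\alpha+\delta-2\gamma$. A second, more delicate point is the uniform boundedness of $c_\alpha$ as $\alpha\to0^+$ and $\alpha\to1^-$: the lower bound $\sin^2\pi\alpha$ on the denominator of $K_\alpha$ degenerates at the endpoints, and must be compensated by the prefactor $\sin\pi\alpha$ together with the $\alpha$-dependence of the elementary moments $\int_0^\infty\eta^{q}K_\alpha(\eta)\,d\eta$. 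After the substitution $x=\eta^\alpha$ these reduce to integrals of the type $\int_0^\infty x^{s-1}(1+2x\cos\pi\alpha+x^2)^{-1}\,dx$, which are evaluable in closed form (yielding expressions in $\sin\pi\alpha$ and the Gamma/Beta functions); verifying that all these contributions combine to a constant that stays bounded on the whole of $(0,1)$ is the part I expect to require the most care.
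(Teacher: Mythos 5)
Your overall strategy coincides with the paper's: rescale by $\rho=\eta/\varepsilon$ with $\varepsilon=b^{-1/\alpha}$, pull out $\varepsilon^\alpha$, insert the bound of \Cref{L2-est-ep}, and optimise $\sigma,\wsigma,\sigma'$ subject to integrability of the weighted kernel. However, there is a concrete gap in your treatment of the tail that breaks the argument precisely in the case that produces the headline rate. When $2\alpha+\delta\le 2\gamma$ you must take $\theta=\sigma+\wsigma=2\alpha+\delta$, which forces the tail exponent to $p=\alpha-1$, so the integrand behaves like $\eta^{-1}$ as $\eta\to\infty$ and the integral over $[\varepsilon,\infty)$ diverges logarithmically \emph{at the upper limit}, not at the splitting point: your claim that the critical integral ``is cut off by the lower limit, producing the factor $c_h=1+|\ln h|$'' is incorrect, and with only the single split at $\eta=\varepsilon$ there is no mechanism in your setup to generate a factor $|\ln h|$ from a divergence at $+\infty$. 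The paper closes this by splitting the tail additionally at $\rho=h^{-2}$ (its terms $I_3$ and $I_4$) and, crucially, \emph{re-choosing} the parameters on $[h^{-2},\infty)$ (taking $\wsigma=0$, $\sigma=\min(\max(0,\delta-1),\gamma)$, $\sigma'=\min(\delta,\gamma+1)$) so that the far tail converges and its contribution $h^{2\alpha+\min(\delta,1+\gamma)}$ is absorbed, while the middle piece $\int_1^{h^{-2}}\rho^{-1}\,d\rho$ yields the $|\ln h|$. Without this extra cutoff and parameter swap your plan cannot establish the bound for $2\alpha+\delta\le2\gamma$ (e.g.\ every $\alpha$ when $\gamma=1$, $\delta=0$).

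A secondary, fixable weakness is the uniformity in $\alpha$. Bounding the denominator below by $\sin^2\pi\alpha$ and cancelling against the prefactor $\sin\pi\alpha$ leaves a residual $1/\sin\pi\alpha$, and the closed-form Beta-function moments you invoke to rescue this apply to integrals over all of $(0,\infty)$, not to the truncated pieces your decomposition produces. The paper instead splits the analysis at $\alpha=1/2$: for $\alpha>1/2$ it integrates the kernel exactly via the antiderivative $\arctan\frac{\rho+\cos\pi\alpha}{\sin\pi\alpha}$ (which cancels the $\sin\pi\alpha$ prefactor against the uniformly bounded arctan increment) and uses the pointwise bound $\rho^{2\alpha}+2\rho^\alpha\cos\pi\alpha+1\ge\frac14\rho^{2\alpha}$ for $\rho\ge|2\cos\pi\alpha|^{1/\alpha}$. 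You would need some version of this device to make $c_\alpha$ genuinely uniform on $(0,1)$.
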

\begin{proof}
    Let $\varepsilon=b^{-1/\alpha}$, then by change of variables we have
    \begin{equation*}
    \begin{aligned}
        \|u-u_h\|\le  \frac{\varepsilon^\alpha\sin\pi\alpha}{\pi}\int_{0}^\infty\frac{\rho^\alpha \|w^\varepsilon_{\rho}-w^\varepsilon_{\rho,h}\|}{\rho^{2\alpha}+2 \rho^\alpha \cos\pi\alpha +1}d\rho.
    \end{aligned}
    \end{equation*}
We split the integral into four parts, say $\int_0^\varepsilon+\int_\varepsilon^{1}+\int_1^{h^{-2}}+\int_{h^{-2}}^\infty$, denoted by $I_1,I_2,I_3$ and $I_4$,  respectively. Recalling the estimate for $\|w^{\varepsilon}_{\rho}-w^{\varepsilon}_{\rho,h}\|$ in  \Cref{L2-est-ep}, we further split $\{I_{k}\}_{k=2}^4$ into $\{I_{k1},I_{k2}\}_{k=2}^4$ corresponding to the two terms on the right hand side of \eqref{est-w-rho-ep}.

We first focus on the case of $\alpha\in (0,1/2]$. Use \Cref{L2-est-ep} and take $\sigma=\wsigma=\sigma'=\gamma$, then we obtain
\begin{equation}\label{I1-a}
    I_1\le c h^{2\gamma}\varepsilon^{\alpha-1}\frac{\sin \pi\alpha}{\pi}\int_0^\varepsilon\rho^\alpha d\rho
    = c\varepsilon^{2\alpha}h^{2\gamma}\frac{\sin \pi\alpha}{(1+\alpha)\pi}.
\end{equation} 
For $I_2$, we have 
\begin{equation*}
\begin{aligned}
    I_{21}&\le  c \varepsilon^{2\alpha}\frac{\sin \pi\alpha}{\pi}h^{\sigma+\wsigma}\int_{\varepsilon}^{1}\left(\frac{\rho}{\varepsilon}\right)^{\alpha+\max(\frac{\sigma+\wsigma-\delta}{2}-1,\frac{\wsigma-1}{2}-1)}d\frac{\rho}{\varepsilon}\\
    &\le c \frac{\sin \pi\alpha}{\pi}h^{\sigma+\wsigma}
    \left\{
    \begin{matrix}
    	\frac{\varepsilon^{2\alpha}+\varepsilon^{\alpha+\frac{1}{2}\min(\delta-\sigma-\wsigma,1-\wsigma)}}{\alpha+\max(\frac{\sigma+\wsigma-\delta}{2},\frac{\wsigma-1}{2})}, & \alpha+\max(\frac{\sigma+\wsigma-\delta}{2},\frac{\wsigma-1}{2})\neq 0, \\
    	\varepsilon^{2\alpha} |\ln\varepsilon|, & \mbox{otherwise},
    \end{matrix}
    \right.    
\end{aligned}
\end{equation*}
and 
\begin{equation*}
	\begin{aligned}
		I_{22}&\le  c \varepsilon^{2\alpha}\frac{\sin \pi\alpha}{\pi}h^{\sigma'+\wsigma}\int_{\varepsilon}^{1}\left(\frac{\rho}{\varepsilon}\right)^{\alpha+\max(\frac{\sigma'+\wsigma-\delta}{2}-1,\frac{\wsigma}{2}-1)}d\frac{\rho}{\varepsilon}\\
		&\le c \frac{\sin \pi\alpha}{\pi}h^{\sigma'+\wsigma}
		\left\{
		\begin{matrix}
			\frac{\varepsilon^{2\alpha}+\varepsilon^{\alpha+\frac{1}{2}\min(\delta-\sigma'-\wsigma,-\wsigma)}}{\alpha+\max(\frac{\sigma'+\wsigma-\delta}{2},\frac{\wsigma}{2})}, & \alpha+\max(\frac{\sigma'+\wsigma-\delta}{2},\frac{\wsigma}{2})\neq 0, \\
			\varepsilon^{2\alpha}|\ln \varepsilon|, & \mbox{otherwise}.
		\end{matrix}
		\right.    
	\end{aligned}
\end{equation*}
For $I_3$, replace the denominator by $\rho^{2\alpha}$ then we get
\begin{equation*}
	\begin{aligned}
		&I_{31}\le  c \frac{\sin \pi\alpha}{\pi}h^{\sigma+\wsigma}\int_{1}^{h^{-2}}\left(\frac{\rho}{\varepsilon}\right)^{-\alpha+\max(\frac{\sigma+\wsigma-\delta}{2},\frac{\wsigma-1}{2})-1}d\frac{\rho}{\varepsilon}\\
		&\le c \frac{\sin \pi\alpha}{\pi}h^{\sigma+\wsigma}
		\left\{
		\begin{matrix}
			\frac{\varepsilon^{\alpha+\frac{1}{2}\min(\delta-\sigma-\wsigma,1-\wsigma)}}{\max(\frac{\sigma+\wsigma-\delta}{2},\frac{\wsigma-1}{2})-\alpha}(1+h^{2\alpha+\min(\delta-\sigma-\wsigma,1-\widetilde{\sigma})}), & \max(\frac{\sigma+\wsigma-\delta}{2},\frac{\wsigma-1}{2})\neq \alpha, \\
			2|\ln h|, & \mbox{otherwise},
		\end{matrix}
		\right.    
	\end{aligned}
\end{equation*}
and
\begin{equation*}
	\begin{aligned}
		&I_{32}\le  c \frac{\sin \pi\alpha}{\pi}h^{\sigma'+\wsigma}\int_{1}^{h^{-2}}\left(\frac{\rho}{\varepsilon}\right)^{-\alpha+\max(\frac{\sigma'+\wsigma-\delta}{2},\frac{\wsigma}{2})-1}d\frac{\rho}{\varepsilon}\\
		&\le c \frac{\sin \pi\alpha}{\pi}h^{\sigma'+\wsigma}
		\left\{
		\begin{matrix}
			\frac{\varepsilon^{\alpha+\frac{1}{2}\min(\delta-\sigma'-\wsigma,-\wsigma)}}{\max(\frac{\sigma'+\wsigma-\delta}{2},\frac{\wsigma}{2})-\alpha}(1+h^{2\alpha+\min(\delta-\sigma'-\wsigma,-\widetilde{\sigma})}), & \max(\frac{\sigma'+\wsigma-\delta}{2},\frac{\wsigma}{2})\neq \alpha, \\
			2|\ln h|, & \mbox{otherwise}.
		\end{matrix}
		\right.    
	\end{aligned}
\end{equation*}
For the last term $I_4$, take $\wsigma =0$ then we have for $\sigma\neq2\alpha+\delta$
\begin{equation*}
	\begin{aligned}
		I_{41}&\le  c \frac{\sin \pi\alpha}{\pi}h^{\sigma+\wsigma}\int_{h^{-2}}^{+\infty}\left(\frac{\rho}{\varepsilon}\right)^{-\alpha+\max(\frac{\sigma+\wsigma-\delta}{2},\frac{\wsigma-1}{2})-1}d\frac{\rho}{\varepsilon}\\
		&=  \frac{c\sin \pi\alpha}{\pi|\frac{1}{2}\max(\sigma-\delta,-1)-\alpha|}\varepsilon^{\alpha+\frac{1}{2}\min(\delta-\sigma,1)} h^{2\alpha+\min(\delta,1+\sigma)},
	\end{aligned}
\end{equation*}
and for $\sigma'\neq 2\alpha+\delta$
\begin{equation*}
	\begin{aligned}
		I_{42}&\le  c \frac{\sin \pi\alpha}{\pi}h^{\sigma'+\wsigma}\int_{h^{-2}}^{+\infty}\left(\frac{\rho}{\varepsilon}\right)^{-\alpha+\max(\frac{\sigma'+\wsigma-\delta}{2},\frac{\wsigma}{2})-1}d\frac{\rho}{\varepsilon}\\
		&=  \frac{c\sin \pi\alpha}{\pi|\frac{1}{2}\max(\sigma'-\delta,0)-\alpha|}\varepsilon^{\alpha+\frac{1}{2}\min(\delta-\sigma',0)} h^{2\alpha+\min(\delta,\sigma')}.
	\end{aligned}
\end{equation*}
Next we turn to the situation of $\alpha\in (1/2,1)$. In this case we still take $\sigma=\wsigma=\sigma'=\gamma$ then it follows
\begin{equation*}
    \begin{aligned}
       I_1 &\le \varepsilon^{\alpha-1}h^{2\gamma}\frac{\sin \pi\alpha}{\pi}\int_0^\varepsilon\frac{\rho^\alpha}{(\rho^\alpha+\cos\pi\alpha)^2+\sin^2\pi\alpha}d\rho\\
        &= \varepsilon^{\alpha-1}h^{2\gamma}\frac{\sin \pi\alpha}{\pi\alpha}\int_0^{\varepsilon^\alpha}\frac{\rho^{1/\alpha}}{(\rho+\cos\pi\alpha)^2+\sin^2\pi\alpha}d\rho\\
        &\le  \varepsilon^{\alpha}h^{2\gamma}\frac{1}{\pi\alpha}\left.\arctan\frac{\rho+\cos\pi\alpha}{\sin\pi\alpha}\right|_{0}^{\varepsilon^\alpha}\le \frac{\varepsilon^{\alpha}}{\alpha}h^{2\gamma}.
    \end{aligned}
\end{equation*}
Consider $I_2$, by  change of variable $\rho^\alpha\rightarrow \rho$ and the Cauchy-Schwarz inequality we get 
\begin{equation*}
\begin{aligned}
	    I_{21}&\le  c h^{\sigma+\wsigma}\varepsilon^{\alpha+\frac{1}{2}\min(\delta-\sigma-\wsigma,1-\wsigma)}\frac{\sin \pi\alpha}{\alpha\pi}\int_{\varepsilon^\alpha}^{1}\frac{\rho^{\frac{1}{2\alpha}\max(\sigma+\wsigma-\delta,\wsigma-1)}}{(\rho+\cos\pi\alpha)^2+\sin^2\pi\alpha}d\rho\\
	    &\le ch^{\sigma+\wsigma}\varepsilon^{\alpha+\frac{1}{2}\min(\delta-\sigma-\wsigma,1-\wsigma)}\frac{1}{\pi\alpha}\max\left\{1,\varepsilon^{\frac{1}{2}\max(\sigma+\wsigma-\delta,\wsigma-1)}\right\}\left.\arctan\frac{\rho+\cos\pi\alpha}{\sin\pi\alpha}\right|_{\varepsilon^\alpha}^{1}\\
	    &\le \frac{ch^{\sigma+\wsigma}}{\alpha}\max\left\{\varepsilon^{\alpha+\frac{1}{2}\min(\delta-\sigma-\wsigma,1-\wsigma)},\varepsilon^{\alpha}\right\}
\end{aligned}
\end{equation*}
and similarly 
\begin{equation*}
	\begin{aligned}
		I_{22}&\le \frac{ch^{\sigma'+\wsigma}}{\alpha}\max\left\{\varepsilon^{\alpha+\frac{1}{2}\min(\delta-\sigma'-\wsigma,-\wsigma)},\varepsilon^{\alpha}\right\}.
	\end{aligned}
\end{equation*}
Next we turn to $I_3$. Let $l=|2\cos\pi\alpha|^{1/\alpha}$ then for $\rho\ge l$ it follows $\rho^\alpha+\cos\pi\alpha\ge \frac{1}{2}\rho^\alpha$, so if $l\le 1$, or equivalently  $\alpha\in(\frac{1}{2},\frac{2}{3}]$, by virtue of the following inequality
\begin{equation}\label{den-est}
	\rho^{2\alpha}+2\rho^\alpha\cos\pi\alpha+1=(\rho^\alpha+\cos\pi\alpha)^2+\sin^2\pi\alpha\ge \frac{1}{4}\rho^{2\alpha}
\end{equation}
we can get the same estimates for $I_{31}$ and $I_{32}$ as the case $\alpha\le \frac{1}{2}$. 
Otherwise, we split $I_{3k}(k=1,2)$ into two terms $\int_{1}^{l}+\int_{l}^{h^{-2}}$ denoted by $I_{3k}^1$ and $I_{3k}^2$, respectively. By change of variable $\rho^\alpha\rightarrow \rho$ for $I_{3k}^1$ and repeating the procedures as for  $I_{2}$ we have 
\begin{equation*}
	\begin{aligned}
		I_{31}^1&\le  c h^{\sigma+\wsigma}\varepsilon^{\alpha+\frac{1}{2}\min(\delta-\sigma-\wsigma,1-\wsigma)}\frac{\sin \pi\alpha}{\alpha\pi}\int_{1}^{l^\alpha}\frac{\rho^{\frac{1}{2\alpha}\max(\sigma+\wsigma-\delta,\wsigma-1)}}{(\rho+\cos\pi\alpha)^2+\sin^2\pi\alpha}d\rho\\
		&\le ch^{\sigma+\wsigma}\varepsilon^{\alpha+\frac{1}{2}\min(\delta-\sigma-\wsigma,1-\wsigma)}\frac{1}{\pi\alpha}\left.\arctan\frac{\rho+\cos\pi\alpha}{\sin\pi\alpha}\right|_{\varepsilon^\alpha}^{1}\\
		&\le \frac{ch^{\sigma+\wsigma}}{\alpha}\varepsilon^{\alpha+\frac{1}{2}\min(\delta-\sigma-\wsigma,1-\wsigma)}
	\end{aligned}
\end{equation*}
and similarly
\begin{equation*}
	\begin{aligned}
		I_{32}^1&\le \frac{ch^{\sigma'+\wsigma}}{\alpha}\varepsilon^{\alpha+\frac{1}{2}\min(\delta-\sigma'-\wsigma,-\wsigma)}.
	\end{aligned}
\end{equation*}
For $I_{3k}^2$ and $I_4$, utilizing \eqref{den-est} and repeating the same procedures as for $I_3$ and $I_4$ for the case $\alpha\le \frac{1}{2}$ then the same estimates follow. 

If $\varepsilon\ge h^{2}$, then take $\sigma=\wsigma=\min\{\alpha+\frac{\delta}{2},\gamma\}$ for $I_{21}$, $I_{31}$  and $\wsigma=0$, $\sigma'=\min\{2\gamma,2\alpha+\delta\}$ for $I_{22}$, $I_{32}$ it follows 
\begin{equation}\label{I2-F}
	I_2\le c_\alpha\left\{ 
	\begin{matrix}
		  \varepsilon^{\alpha+\min(0,\frac{\delta}{2}-\gamma)}h^{2\gamma}, &\quad 2\alpha+\delta> 2\gamma, \\
		h^{2\alpha+\delta}, &\quad 2\alpha+\delta\le 2\gamma,\\
	\end{matrix}
	\right.
\end{equation}
and 
\begin{equation}\label{I3-F}
	I_3\le c_\alpha\left\{ 
	\begin{matrix}
		\varepsilon^{\alpha+\min(\frac{\delta}{2}-\gamma,0)} h^{2\gamma}, &\quad 2\alpha+\delta> 2\gamma, \\
		|\ln h|h^{2\alpha+\delta}, &\quad 2\alpha+\delta\le 2\gamma.\\
	\end{matrix}
	\right.
\end{equation}
 If $\varepsilon\le h^{2}$, take $\sigma=\wsigma=\min(\gamma,\frac{\delta}{2})$ in $I_{21}$, $I_{31}$ and $\widetilde{\sigma}=0$, $\sigma'=\min(\delta,2\gamma)$ in $I_{22}$ and $I_{32}$ then
\begin{equation}\label{I2-F1}
	I_2,I_3\le c_\alpha \varepsilon^{\alpha} h^{\min(\delta,2\gamma)}.
\end{equation} 
Lastly for $I_4$, we take $\sigma=\min(\max(0,\delta-1),\gamma)$ and $\sigma'=\min(\delta,\gamma+1)$ then it follows
\begin{equation}\label{I4-F}
	I_4\le c_\alpha \varepsilon^{\alpha}h^{2\alpha+\min(\delta,1+\gamma)}. 
\end{equation}
The boundedness of $c_\alpha$ with respect to $\alpha$ is due to the factor $\sin\pi\alpha$.
\end{proof}
\begin{remark}\label{remark-space}
	For $\gamma=1$,  \Cref{theorem-space} can be simplified into the following form
        \begin{equation*}
	\|u-u_h\|\le  c_\alpha
	\left\{
	\begin{matrix}
		 C'(\varepsilon)c_h  h^{\min(2\alpha+\delta,2)}\|\cA^{\frac{\delta}{2}}f\|, & \quad b\le h^{-2\alpha},\\
		b^{-1}h^{\delta}\|\cA^{\frac{\delta}{2}}f\|, & \quad b> h^{-2\alpha},
	\end{matrix}
	\right.
\end{equation*}
where $C'(\varepsilon)=\varepsilon^{\max(\alpha+\frac{\delta}{2}-1,0)}$ and $c_h=1+|\ln h|$ if $2\alpha+\delta\le 2$,  otherwise $c_h=1$.
\end{remark}

\section{Error analysis for quadrature}
In this section we focus on the quadrature error. Appealing to \eqref{u_h}, we transplant the domain from $(0,+\infty)$ to $(-\infty,+\infty)$ by letting $\rho^\alpha=e^s$ then 
\begin{equation}\label{uh}
    \begin{aligned}
    u_h=	\frac{\sin\pi\alpha}{\alpha\pi}\int_{-\infty}^\infty\frac{(1+e^{-s/\alpha}\cA_h)^{-1}f_h}{e^{s}+2b\cos\pi\alpha +b^2e^{-s}}ds.
\end{aligned}
\end{equation}
To evaluate the integral we apply the trapezoidal rule, say
\begin{equation}\label{buh}
    u_h\approx u_{h,\tau}=\frac{\sin \pi \alpha}{\alpha\pi}\tau\sum_{n=-\infty}^{\infty}\frac{(1+e^{-n\tau/\alpha}\cA_h)^{-1}f_h}{e^{n\tau}+2b\cos\pi\alpha +b^2e^{-n\tau}}.
\end{equation}
The exponentially decaying property of the integrand allows us to cut the summation  to $n=-M,-M+1,\cdots, N-1,N$ with desired accuracy. That is, in practice our scheme is
\begin{equation}\label{Uh}
    U_{h,\tau}^{M,N}=\frac{\sin \pi \alpha}{\alpha\pi}\tau\sum_{n=-M}^{N}\frac{(1+e^{-n\tau/\alpha}\cA_h)^{-1}f_h}{e^{n\tau}+2b\cos\pi\alpha +b^2e^{-n\tau}}.
\end{equation}
Next we shall give the error between $u_h$ and $u_{h,\tau}$ then $u_h-U_{h,\tau}^{M,N}$ follows from the triangle inequality. To this end we need the resolvent estimate for the discrete operator $\cA_h$.
\begin{lemma}\label{resol-est}
    Let $\cA$ be the regularly accretive operator with index $\beta$ and $\cA_h$ be the discrete counterpart. The resolvent of $\cA_h$ satisfies the inequality
    \begin{equation}\label{resol-est-dis-1}
        \left\|(z\cI -\cA_h)^{-1}\right\|\le \left\{
        \begin{matrix}
            &\left[|z|\sin(\left|\arg z\right|-\theta)\right]^{-1},&\theta<\left|\arg z\right|\le \frac{\pi}{2}+\theta, \\
            & |z|^{-1}, &\left|\arg z\right| >\frac{\pi}{2}+\theta,
        \end{matrix}	
        \right.
    \end{equation}
where $\theta=\arctan \beta<\frac{\pi}{2}$. And for $\Re{z}\le \frac{c_0}{2}$ it follows
    \begin{equation}\label{resol-est-dis-2}
    \left\|(z\cI -\cA_h)^{-1}\right\|\le \frac{2}{c_0}.
    \end{equation}
\end{lemma}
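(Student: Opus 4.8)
The plan is to reduce the whole estimate to a computation of the numerical range of $\cA_h$ together with the elementary bound relating the resolvent norm to the distance to that numerical range. First I would compute the numerical range. For any $v_h\in V_h\setminus\{0\}$, the definition $(\cA_h w_h,v_h)=A(w_h,v_h)$ gives
\[
\frac{(\cA_h v_h,v_h)}{\|v_h\|^2}=\frac{A(v_h,v_h)}{\|v_h\|^2}=\frac{A_{\RE}(v_h,v_h)+\i A_{\IM}(v_h,v_h)}{\|v_h\|^2}.
\]
Since $A_{\RE}(v_h,v_h)=\Re(A(v_h,v_h))\ge c_0\|v_h\|_V^2\ge c_0\|v_h\|^2$ by \eqref{prop-e}, and since $\cA$ has index $\beta$ so that $|A_{\IM}(v_h,v_h)|\le\beta A_{\RE}(v_h,v_h)$, every such Rayleigh quotient $w$ obeys $\Re w\ge c_0$ and $|\arg w|\le\arctan\beta=\theta$. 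Hence the numerical range $W(\cA_h)$ lies in the truncated sector $S_\theta\cap\{\Re w\ge c_0\}$, where $S_\theta=\{w:|\arg w|\le\theta\}$.

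Next I would invoke the standard numerical-range resolvent bound: for $z\notin\overline{W(\cA_h)}$ one has $\|(z\cI-\cA_h)^{-1}\|\le 1/\operatorname{dist}(z,W(\cA_h))$. As $V_h$ is finite-dimensional this is elementary: setting $v_h=(z\cI-\cA_h)^{-1}u_h$ gives $(u_h,v_h)=(z-w)\|v_h\|^2$ with $w=(\cA_h v_h,v_h)/\|v_h\|^2\in W(\cA_h)$, whence $\operatorname{dist}(z,W(\cA_h))\|v_h\|^2\le|(u_h,v_h)|\le\|u_h\|\|v_h\|$; invertibility for $z\notin\overline{W(\cA_h)}$ holds because $\sigma(\cA_h)\subset\overline{W(\cA_h)}$ in finite dimensions. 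What remains is purely geometric: evaluate $\operatorname{dist}(z,S_\theta)$. For $\theta<|\arg z|\le\frac{\pi}{2}+\theta$ the nearest point of $S_\theta$ sits on the edge ray at angle $\pm\theta$, and the orthogonal distance is $|z|\sin(|\arg z|-\theta)$; for $|\arg z|>\frac{\pi}{2}+\theta$ the orthogonal foot falls past the vertex, so the closest point is the origin and the distance is $|z|$. Because $W(\cA_h)\subset S_\theta$, we have $\operatorname{dist}(z,W(\cA_h))\ge\operatorname{dist}(z,S_\theta)$, and these two regimes produce exactly \eqref{resol-est-dis-1}.

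For the second inequality I would instead exploit the sharper containment $W(\cA_h)\subset\{\Re w\ge c_0\}$. If $\Re z\le c_0/2$, then for every $w\in W(\cA_h)$ we have $|z-w|\ge\Re w-\Re z\ge c_0-c_0/2=c_0/2$, so $\operatorname{dist}(z,W(\cA_h))\ge c_0/2$ and the numerical-range bound yields $\|(z\cI-\cA_h)^{-1}\|\le 2/c_0$, which is \eqref{resol-est-dis-2}.

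The one point requiring care, and the step I would flag as the main obstacle, is ensuring the numerical-range bound is applied in the correct inner product: $\cA_h$ is generally non-self-adjoint, and $\|(z\cI-\cA_h)^{-1}\|\le1/\operatorname{dist}(z,W(\cA_h))$ is valid only when both $W(\cA_h)$ and the operator norm $\|\cdot\|$ are taken with respect to the \emph{same} ($L^2$) inner product. Since $\cA_h$ is defined precisely through the $L^2$ pairing $(\cA_h w_h,v_h)=A(w_h,v_h)$ and $\|\cdot\|$ denotes the $L^2$-norm, everything is consistent and no passage to a weighted or graph norm is needed.
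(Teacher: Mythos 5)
Your proposal is correct and follows essentially the same route as the paper: locate the numerical range of $\cA_h$ inside the sector $\{|\arg w|\le\theta\}$ (and inside $\{\Re w\ge c_0\}$ for \eqref{resol-est-dis-2}) and then apply the distance-to-numerical-range resolvent bound. The paper merely states the containment and defers the geometric distance computation and the resolvent inequality to Kato's Theorem 2.2, whereas you carry those steps out explicitly; the content is the same.
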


\begin{proof}
	    Note that $|\Im\left(\cA v,v\right)|\le \beta\Re\left(\cA v,v\right)$ for all $v\in V$ implies the numerical range of $\cA$ is a subset of the sector $\Sigma'_\theta:=\{z\in \mathbb{C}; |\arg z|\le \theta\}$ with $\theta=\arctan \beta<\frac{\pi}{2}$. 
	Since all the eigenvalues of $\cA_h$ lie in the numerical range of $\cA_h$ which is a subset of the numerical range of $\cA$,  the distance between $z$ and the the numerical range of $\cA_h$ is bigger than the distance of $z$ to the sector $\Sigma'_{\theta}$, denoted by $d$, say
	\begin{equation*}
		\left|z-\frac{(\cA_h v,v)}{\|v\|^2}\right|\ge d,\quad \forall v\in V_h, v\ne 0.
	\end{equation*}
The desired estimates follows from repeating the  arguments in the proof of \cite[Theorem 2.2]{kato1961}.
\end{proof}

Denote 
\begin{equation*}
    \kappa(x,s)=\frac{(1+e^{-s/\alpha}\cA_h)^{-1}f_h}{e^{s}+2b\cos\pi\alpha +b^2e^{-s}},
\end{equation*}
then we know the poles of $\kappa(\cdot,s)$ are those that make $(1+e^{-s/\alpha}\cA_h)^{-1}$ not invertible and those which are the zeros of $e^{s}+2b\cos\pi\alpha +b^2e^{-s}$. After manipulations we obtain the poles of the integrand are
$$s_1(k)=\alpha \ln |\sigma(\cA_h)|+\alpha(\arg \sigma(-\cA_h)+2k\pi)\i\quad \mbox{and} \quad s_2(k)= \ln b + \left((2k-1)\pi\pm\alpha\pi\right)\i$$
where $\sigma(\cA_h)$ denotes the spectrum of $\cA_h$ and $k=0,\pm 1,\pm 2\cdots$. Calculations show that 
\begin{equation}\label{m-pole1}
    \min_{k}\left|\Im\left(s_1(k)\right)\right|\ge \left|\Im\left(s_1(0)\right)\right|\ge\alpha(\pi-\arctan\beta):=\kappa_1
\end{equation}
and
\begin{equation}\label{m-pole2}
    \min_{k}\left|\Im\left(s_2(k)\right)\right|\ge (1-\alpha)\pi:=\kappa_2.
\end{equation}
Let $\kappa'$ be any number in $(0, \min\{\kappa_1,\kappa_2\}]$, then thanks to \cite[Theorem 5.1]{trefethen2014exponentially} we know
\begin{equation}\label{est-L2-err}
    \|u_h-u_{h,\tau}\|\le \frac{2\sin\alpha\pi}{\alpha\pi}\frac{\sup_{p\in(-\kappa',\kappa')}M(p)}{e^{2\pi \kappa'/\tau}-1},
\end{equation}
with 
\begin{equation*}
    M(p)=\int_{-\infty}^{+\infty}\|\kappa(x,s+p\i)\|ds<\infty,\quad  p\in (-\kappa',\kappa').
\end{equation*}
Obviously, a bigger $\kappa'$ leads to faster convergence if $M(p)$ is independent of $\kappa'$. However, it is easy to observe that $\|\kappa(x,s+p\i)\|$ is unbounded 
when $\kappa'=\min\{\kappa_1,\kappa_2\}$ and $|p|$ gets close to $\kappa'$,
which may lead to $M(p)$ going to infinity. Next we shall give the bound of $M(p)$ with respect to $p$.

Appealing to Lemma \ref{resol-est} we have for $\pi-\frac{|p|}{\alpha}>\theta$
\begin{equation*}
	\left\|\big(-e^{(s+p\i)/\alpha}-\cA_h\big)^{-1}\right\|\le
		e^{-s/\alpha}\sin^{-1}\left(\min\Big\{\pi-\frac{|p|}{\alpha}-\theta,\frac{\pi}{2}\Big\}\right),
\end{equation*}
which implies for $|p|<\kappa_1$
\begin{equation*}
	\left\|\big(1+e^{-(s+p\i)/\alpha}\cA_h\big)^{-1}\right\|\le 
		\sin^{-1}\left(\min\big\{\pi-\frac{|p|}{\alpha}-\theta,\frac{\pi}{2}\big\}\right)
\end{equation*}
with $\theta=\arctan \beta<\frac{\pi}{2}$. So  for $|p|<\min\{\kappa_1,\kappa_2\}$  we have 
\begin{equation}\label{est-num}
	\left\|\big(1+e^{-(s+p\i)/\alpha}\cA_h\big)^{-1}f_h\right\|\le 
		\sin^{-1}\left(\min\Big\{\frac{\kappa_1-|p|}{\alpha},\frac{\pi}{2}\Big\}\right)\;\|f_h\|.
\end{equation} 
 \begin{remark}
	In fact, Lemma \ref{resol-est} implies for $s\le \alpha\ln\frac{c_0}{2}$
	\begin{equation*}
		\left\|\big(1+e^{-(s+p\i)/\alpha}\cA_h\big)^{-1}f_h\right\|\le e^{s/\alpha-\ln\frac{c_0}{2}} \; \|f_h\|,
	\end{equation*}
	which ensures that the integrand $\|\kappa(x,s+p\i)\|$ still decays exponentially for $s\rightarrow -\infty$ regardless of whether $b=0$ or not.
\end{remark}

Let $d(z)=e^{z}+2b\cos\pi\alpha +b^2e^{-z}$ and denote $z_0=\ln b \pm (1-\alpha)\pi \i$ that are the zeros of $d(z)$ with minimum $|\Im z|$. Since $d(z)$ is analytic on the whole $z$-plane, a Taylor expansion on $z=z_0$ gives
\begin{equation*}
\begin{aligned}
    d(z)&=\sum_{n=1}^{+\infty}\frac{e^{z_0}+(-1)^nb^2e^{-z_0}}{n!}(z-z_0)^n\\
    &=2b\i\sin(\pm\alpha\pi)\sum_{n=1}^{\infty}\frac{(z-z_0)^{2n-1}}{(2n-1)!}-2b\cos(\alpha\pi)\sum_{n=1}^{\infty}\frac{(z-z_0)^{2n}}{(2n)!}.
\end{aligned}
\end{equation*}
So for $|z-z_0|\le \frac{1}{2}\min\{1,|\tan\alpha\pi|\}:=s_\alpha$ with $\alpha\in (0,1)$, it follows
\begin{equation}\label{est-den}
\begin{aligned}
    |d(z)|&\ge 2b\sin(\alpha\pi)|z-z_0|-2b\sin(\alpha\pi)\sum_{n=1}^{\infty}\frac{|z-z_0|^{2n+1}}{(2n+1)!}-2b|\cos(\alpha\pi)|\sum_{n=1}^{\infty}\frac{|z-z_0|^{2n}}{(2n)!}\\
    &\ge 2b\sin(\alpha\pi)|z-z_0|-\frac{b}{3}\sin(\alpha\pi)\frac{|z-z_0|^3}{1-|z-z_0|^2}-b|\cos(\alpha\pi)|\frac{|z-z_0|^2}{1-|z-z_0|^2}\\
    &\ge \frac{11}{9}b\sin(\alpha\pi)|z-z_0|>b\sin(\alpha\pi) |z-z_0|.
\end{aligned}
\end{equation}

Let $\epsilon=\kappa_2-|p|$, then for $\epsilon\le s_\alpha$ we know there exists some $s^*\ge 0$ such that $|s^*+\ln b+p\i-z_0|=s_\alpha$. In this case we split the integral $\int_{-\infty}^{\infty}\|\kappa(x,s+p \i)\| ds$ into three parts, say, $\int_{-\infty}^{\ln b-s^*}\cdot \; ds$, $\int_{\ln b-s^*}^{\ln b+s^*}\cdot \; ds$ and $\int_{\ln b+s^*}^{+\infty}\cdot \; ds$. Then for the second part, appealing to \eqref{est-num} and \eqref{est-den} we have
\begin{equation}\label{est-kappa-2}
\begin{aligned}
    &\int_{\ln b-s^*}^{\ln b+s^*}\|\kappa(x,s+p \i)\| ds\le \frac{2 \sin^{-1}\left(\min\big\{\frac{\kappa_1-|p|}{\alpha},\frac{\pi}{2}\big\}\right)}{b\sin\alpha\pi} \int_{0}^{s^*}\frac{\|f_h\|}{\sqrt{\epsilon^2+s^2}} ds\\
    &= \frac{2\sin^{-1}\left(\min\big\{\frac{\kappa_1-|p|}{\alpha},\frac{\pi}{2}\big\}\right)}{b\sin\alpha\pi}\left(\ln\left(s^*+\sqrt{\epsilon^2+s^*}\right)-\ln \epsilon\right)\|f_h\|.
\end{aligned}
\end{equation}
Now, we consider the first and third terms. In fact, trivial calculations show that for $\forall z=s+p\i$
\begin{equation}\label{derivative-dz}
    \begin{aligned}
        \frac{\partial}{\partial s}|d(z)|^2&\,=2(e^s+b^2 e^{-s}+2b\cos\alpha\pi\cos p)(e^s-b^2 e^{-s})\\
        &\hskip-8pt\left\{
            \begin{aligned}
            &\ge 2(e^s+b^2 e^{-s}-2b)(e^s-b^2 e^{-s}), &\quad \text{if } s > \ln b, \\
            & =0,& \quad \text{if } s = \ln b, \\
            &\le 2(e^s+b^2 e^{-s}-2b)(e^s-b^2 e^{-s}), &\quad \text{if } s < \ln b,
            \end{aligned}
        \right.
    \end{aligned}
\end{equation}
which implies $\partial_s |d(s +p\i)| > 0$ for $s > \ln b$ and $\partial_s |d(s+p\i)|< 0$ for $s<\ln b$. That is  $|d(s+p\i)|$ increases as $|s-\ln b|$ increases. Denote 
\begin{equation*}
    \underline{d}(s)=\left(e^{2s}+b^4e^{-2s}-4be^{s}-4b^3e^{-s}+(6+s_\alpha^2\sin^2(\alpha\pi))b^2\right)^{1/2}
\end{equation*}
then it is easy to check that $\underline{d}^2(s)\le |d(z)|^2$ for $s\in (-\infty,+\infty)$. So
\begin{equation}\label{est-kappa-1-3}
\begin{aligned}
    &\int_{-\infty}^{\ln b-s^*} \|\kappa(x,s+p \i)\|\; ds+ \int_{\ln b+s^*}^{\infty} \|\kappa(x,s+p \i)\|\; ds \\
    &\le 2 \int_{s^*}^\infty\frac{\sin^{-1}\left(\min\Big\{\frac{\kappa_1-|p|}{\alpha},\frac{\pi}{2}\Big\}\right)\|f_h\|}{b(e^{2s}+e^{-2s}-4e^{s}-4e^{-s}+6+s_\alpha^2\sin^2(\alpha\pi))^{1/2}}ds\\
    &\le \frac{c(s^*)}{b}\sin^{-1}\left(\min\Big\{\frac{\kappa_1-|p|}{\alpha},\frac{\pi}{2}\Big\}\right)\|f_h\|
\end{aligned}
\end{equation}
with $c(s^*)$ depending only on $s^*$ and $\alpha$. 

For $\epsilon>s_\alpha$, say, $|p|<(1-\alpha)\pi-s_\alpha$, noticing that 
\begin{equation*}
\begin{aligned}
        |d(s+p\i)|\ge |d(\ln b+p\i)|&=2b|\cos p+\cos\pi\alpha|=4b\left|\cos\left(\frac{|p|+\pi\alpha}{2}\right)\cos\left(\frac{|p|-\pi\alpha}{2}\right)\right|
\end{aligned}
\end{equation*}
and
\begin{equation*}
    \frac{\pi\alpha}{2}\le\frac{|p|+\pi\alpha}{2}\le \frac{\pi-s_\alpha}{2}, \quad -\frac{\pi\alpha}{2}\le \frac{|p|-\pi\alpha}{2}\le \frac{\pi}{2}-\frac{s_\alpha}{2}-\pi\alpha,
\end{equation*}
we have for $|p|\le \kappa'$
\begin{equation}\label{est-den-2}
    |d(s+p\i)|\ge 4b\min\left\{\cos\frac{\pi\alpha}{2}, \sin\frac{s_\alpha}{2}\right\}
                  \min\left\{\cos\frac{\pi\alpha}{2}, \sin\Big(\frac{s_\alpha}{2} + \pi\alpha\Big)\right\}
\end{equation}
which is independent of $p$. So taking $s^*=0$ and repeating the procedures of $\epsilon\le s_\alpha$  it follows
\begin{equation}\label{est-kappa}
    \int_{-\infty}^{\infty}\|\kappa(x,s+p \i)\| ds\le  \frac{c_\alpha}{b}\sin^{-1}\left(\min\Big\{\frac{\kappa_1-|p|}{\alpha},\frac{\pi}{2}\Big\}\right)\|f_h\|
\end{equation}
with $c_\alpha$ depending only on $\alpha$. 

The analysis above implies for $p=\kappa_2-\epsilon$ with $\forall \epsilon \in (0,\kappa_2)$, 
\begin{equation}\label{est-kappa-F}
    \int_{-\infty}^{\infty}\|\kappa(x,s+p \i)\| ds\le  \frac{\tilde{c}_\alpha}{b} (1+|\ln\epsilon|)\sin^{-1}\left(\min\Big\{\frac{\kappa_1-|p|}{\alpha},\frac{\pi}{2}\Big\}\right)\|f_h\|
\end{equation}
with $\tilde{c}_\alpha$ depending only on $\alpha$. This gives the following theorem:

\begin{theorem}\label{theorem-quadrature}
    Let $A(\cdot,\cdot)$ denote the sesquilinear form given in \eqref{eqn-sesquilinear} that satisfies \eqref{prop-e} and \eqref{prop-c}, and denote by $\cA$ the regular accretive operator associated with it and $\cA_h$ the corresponding discrete counterpart. Let $\kappa_1=\alpha(\pi-\arctan\beta)$, $\kappa_2=(1-\alpha)\pi$, then for $\tau<\pi-\arctan\beta$ we have
    \begin{equation*}
        \|u_h-u_{h,\tau}\|\le \frac{C(\alpha)}{b} C(\tau) e^{-2\pi\min(\kappa_1,\kappa_2)/\tau}\|f_h\|,
    \end{equation*}
where 
\begin{equation*}
	C(\tau)=\left\{
	\begin{matrix}
		\sin^{-1}\left(\min\Big\{\frac{\kappa_1-\kappa_2}{\alpha}+{\frac{1-\alpha}{\alpha}}\tau,\frac{\pi}{2}\Big\}\right) (1+|\ln \tau|),&\quad \kappa_1>\kappa_2,\\
		\sin^{-1}\left(\min\Big\{\tau,\frac{\pi}{2}\Big\}\right) (1+|\ln (\kappa_2-\kappa_1+\alpha\tau)|),&\quad \kappa_1\le\kappa_2,\\
	\end{matrix}
	\right.
\end{equation*}
and $C(\alpha)$ is a constant depends only on $\alpha$.
\end{theorem}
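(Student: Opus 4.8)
The plan is to optimize the abstract trapezoidal-rule bound \eqref{est-L2-err} over the admissible strip half-width $\kappa'$, using the $M(p)$-estimate \eqref{est-kappa-F} that has already been established. Writing $|p|=\kappa'$ and $\epsilon=\kappa_2-\kappa'$, that estimate shows the numerator $\sup_{p\in(-\kappa',\kappa')}M(p)$ grows only like $\tfrac1b(1+|\ln\epsilon|)\sin^{-1}(\min\{(\kappa_1-\kappa')/\alpha,\pi/2\})\|f_h\|$; since both the cosecant factor and the logarithmic factor are increasing in $|p|$, the supremum over the open strip is controlled by evaluating \eqref{est-kappa-F} in the limit $|p|\to\kappa'$. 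The denominator $e^{2\pi\kappa'/\tau}-1$, on the other hand, decays like $e^{-2\pi\kappa'/\tau}$. Pushing $\kappa'$ all the way to $\min\{\kappa_1,\kappa_2\}$ would maximize the exponential gain but send the numerator to infinity, so the entire proof reduces to choosing the gap $\min\{\kappa_1,\kappa_2\}-\kappa'$ of the correct order in $\tau$.

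First I would split into the two regimes dictated by which pole family from \eqref{m-pole1}--\eqref{m-pole2} is closest to the real axis. When $\kappa_1>\kappa_2$ the nearest singularities are the zeros of $d(z)$ at height $\kappa_2$, and I would take $\kappa'=\kappa_2-(1-\alpha)\tau$, so that $\epsilon=(1-\alpha)\tau$ and $(\kappa_1-\kappa')/\alpha=(\kappa_1-\kappa_2)/\alpha+\tfrac{1-\alpha}{\alpha}\tau$, which is exactly the cosecant argument in the claimed $C(\tau)$. When $\kappa_1\le\kappa_2$ the resolvent poles at height $\kappa_1$ dominate, and I would instead set $\kappa'=\kappa_1-\alpha\tau$, giving $(\kappa_1-\kappa')/\alpha=\tau$ and $\epsilon=\kappa_2-\kappa_1+\alpha\tau$, reproducing the second branch of $C(\tau)$. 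In both cases $\kappa'<\min\{\kappa_1,\kappa_2\}$ holds automatically, so that \eqref{est-kappa-F} applies and the cosecant and logarithmic factors stay finite; moreover $\kappa'>0$ precisely because the hypothesis $\tau<\pi-\arctan\beta$ forces $\kappa_1-\alpha\tau=\alpha(\pi-\arctan\beta-\tau)>0$, and a fortiori $\kappa_2-(1-\alpha)\tau>0$.

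With these choices, the denominator satisfies $e^{2\pi\kappa'/\tau}-1\ge c\,e^{2\pi\kappa'/\tau}$ (legitimate because $\kappa'/\tau$ is bounded below over the admissible range of $\tau$), and I would expand
\begin{equation*}
e^{-2\pi\kappa'/\tau}=e^{-2\pi\min(\kappa_1,\kappa_2)/\tau}\,e^{2\pi(\min(\kappa_1,\kappa_2)-\kappa')/\tau},
\end{equation*}
where the last factor equals $e^{2\pi(1-\alpha)}$ in the first case and $e^{2\pi\alpha}$ in the second. Because the gap $\min(\kappa_1,\kappa_2)-\kappa'$ is exactly proportional to $\tau$, this correction is a constant bounded uniformly over $\alpha\in(0,1)$, which I would absorb into $C(\alpha)$ together with the prefactor $\tfrac{2\sin\alpha\pi}{\alpha\pi}$, the $\tilde c_\alpha$ of \eqref{est-kappa-F}, and (in the first case) the bound $1+|\ln((1-\alpha)\tau)|\le c_\alpha(1+|\ln\tau|)$. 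What survives is precisely $\tfrac{C(\alpha)}{b}\,C(\tau)\,e^{-2\pi\min(\kappa_1,\kappa_2)/\tau}\|f_h\|$.

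I expect the only genuinely delicate point to be the scaling of the gap: taking $\epsilon$ proportional to $\tau$ is what prevents the spurious factor $e^{2\pi(\min(\kappa_1,\kappa_2)-\kappa')/\tau}$ from either exploding (were the gap $o(\tau)$) or degrading the sharp exponent $\min(\kappa_1,\kappa_2)$ in the leading term (were the gap a fixed constant). The coefficients $(1-\alpha)$ and $\alpha$ are chosen so that the cosecant and logarithmic factors coincide with the stated $C(\tau)$ rather than merely being bounded by it; verifying this alignment — and confirming that the combined bound \eqref{est-kappa-F}, which is valid for every $\epsilon\in(0,\kappa_2)$ (and in particular for the possibly large $\epsilon=\kappa_2-\kappa_1+\alpha\tau$ arising when $\kappa_1\le\kappa_2$, handled through the $p$-independent estimate \eqref{est-den-2}), may be invoked at $|p|=\kappa'$ — is where the bookkeeping concentrates.
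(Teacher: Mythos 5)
Your proposal follows essentially the same route as the paper's proof: both regimes are handled by the same choices $\kappa'=\kappa_2-(1-\alpha)\tau$ (when $\kappa_1>\kappa_2$) and $\kappa'=\kappa_1-\alpha\tau$ (when $\kappa_1\le\kappa_2$), plugged into the trapezoidal bound \eqref{est-L2-err} via the $M(p)$-estimate \eqref{est-kappa-F}, which is exactly what the paper does. You in fact supply slightly more bookkeeping than the paper (the factor $e^{2\pi(\min(\kappa_1,\kappa_2)-\kappa')/\tau}$ being a bounded constant because the gap is proportional to $\tau$), so the argument is correct and matches the intended proof.
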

\begin{proof}
    For $\kappa_1> \kappa_2$, say, $\alpha > \frac{\pi}{2\pi-\theta}$ with $\theta=\arctan \beta$, appealing to  \eqref{est-kappa-2}, \eqref{est-kappa-1-3} and \eqref{est-kappa}, take $\kappa'=\kappa_2-(1-\alpha)\tau $ we have
    \begin{equation*}
        \sup_{p\in(-\kappa',\kappa')}  M(p) \le \frac{C(\alpha)}{b} \sin^{-1}\left(\min\Big\{\frac{\kappa_1-\kappa_2}{\alpha}+\frac{1-\alpha}{\alpha}\tau,\frac{\pi}{2}\Big\}\right) (1+|\ln \tau|)\|f_h\|,
    \end{equation*}
and for $\kappa_1\le \kappa_2$, taking $\kappa'= \kappa_1 - \alpha\tau$ it follows
\begin{equation*}
\begin{aligned}
    \sup_{p\in(-\kappa',\kappa')}  M(p)  \le \frac{C(\alpha)}{b} \sin^{-1}\left(\min\Big\{\tau,\frac{\pi}{2}\Big\}\right) (1+|\ln (\kappa_2-\kappa_1+\alpha\tau)|)\|f_h\|
\end{aligned}
\end{equation*}
that yields the desired estimate.
\end{proof}
\begin{remark}
	One can observe that in the worst case scenario, $C(\tau)=\mathcal{O}(\tau^{-1}(1+|\ln \tau|))$ as $\tau \rightarrow 0$, which is negligible compared with the exponentially decaying term $e^{-2\pi\min(\kappa_1,\kappa_2)/\tau}$. In fact, for $\alpha$ close to $1$, $C(\tau)=\mathcal{O}(1+|\ln \tau|).$
\end{remark}
 Applying \Cref{resol-est} we have 
\begin{equation*}
	\|(1+e^{-n\tau/\alpha}\cA_h)^{-1} f_h\|\le 
	\left\{ 
	\begin{matrix}
		\frac{2}{c_0} e^{n\tau/\alpha}\|f_h\|, & n\le 0,\\
		\|f_h\|, & n>0,
	\end{matrix}
\right.	
\end{equation*}
so
\begin{equation*}
	\|u_{h,\tau}-U_{h,\tau}^{M,N}\|\le c e^{-N\tau}\|f_h\|+c' b^{-2}e^{-(1+\alpha^{-1})M\tau}\|f_h\|.
\end{equation*}
Recalling  \Cref{theorem-quadrature} and using triangle inequality  it follows
\begin{equation}\label{est-uh-Uh}
	\|u_h-U_{h,\tau}^{M,N}\|\le \left(\frac{C(\alpha)}{b}C(\tau) e^{-\frac{2\pi\min(\kappa_1,\kappa_2)}{\tau}}+ce^{-N\tau}+c'b^{-2}e^{-(1+\alpha^{-1})M\tau}\right)\|f_h\|.
\end{equation} 
In practice the optimal way to choose $\tau,M$ and $N$ is to balance the three terms on the right hand side of \eqref{est-uh-Uh}. Thus, ignoring the negligible coefficients $C(\alpha)$, $C(\tau)$ and $c,c'$ we can choose 
\begin{equation}\label{choice-para}
	M=\max\left\{\frac{\alpha}{\alpha+1}\left(\frac{2\pi\min(\kappa_1,\kappa_2)}{\tau^2}-\frac{\ln b}{\tau}\right),0\right\},\quad N=\frac{2\pi\min(\kappa_1,\kappa_2)}{\tau^2}+\frac{\ln b}{\tau},
\end{equation}
then for $M>0$, in terms of $(1+\alpha^{-1})M+N$ we  arrive at 
\begin{equation}\label{root-exp-1}
	\|u_h-U_{h,\tau}^{M,N}\|\le \frac{C'(\alpha)C(\tau)}{b} e^{-\sqrt{\pi\min(\kappa_1,\kappa_2)\left[(1+\alpha^{-1})M+N\right]}}.
\end{equation}
That is, the error decays root-exponentially with respect to the number of solves.

\section{Numerical examples}
In this section, some numerical tests on two dimensional  domain $\Omega = (0, 1) \times (0, 1)$ 
for various $\cA$ and $f$ under homogeneous boundary conditions are presented to verify our theoretical
estimates in \Cref{theorem-space} and \Cref{theorem-quadrature}.
As application, the space-fractional Allen-Cahn equation is solved at the end of this section.
All the computation is performed by Firedrake \cite{Rathgeber2017}, 
an automated system for the solution of partial differential equations using the finite element method.

We will adopt the following three operators to do all the tests.
\begin{enumerate}
	\item[$\cA_1$]: Laplace operator
	\begin{align*}
		C(x) = 1, \quad a(x) = 0, \quad r = 0.
	\end{align*}
	\item[$\cA_2$]: General real elliptic operator
	\begin{align*}
		C(x) = 
		\begin{pmatrix}
			1 + 0.5\sin \pi x & 0.5\cos \pi x, \\
			0.5\sin \pi y & 1 + 0.5\cos \pi y
		\end{pmatrix},\  a(x) = (0.5 + y, 0.5 + x)^T,\  r = 0.
	\end{align*}
	\item[$\cA_3$]: General complex elliptic operator
	\begin{align*}
		C(x)  = 
		\begin{pmatrix}
			0.5 + 5x\i + y & x - y  \\
			-xy\i & 0.5 + x + 5y \i
		\end{pmatrix},
		\quad a(x) = 0, \quad r = 0.
	\end{align*}
\end{enumerate}
The elliptic regularity index $\gamma$ for the three operators are all supposed to be $1$. For each operator, we test the convergence under the following three source terms:
\begin{enumerate}
	\item[\bf  A .] $\widetilde{H}^{2}$ source term $f(x, y)=xy(1-x)(1-y):=f_1;$
	\item[\bf  B .] $\widetilde{H}^{1}$ source term $f(x, y)=(xy)^{0.51}\big((1-x)(1-y)\big)^{0.51}:=f_2;$
	\item[\bf  C .] $\widetilde{H}^{0.5-\epsilon}$ source term $f(x, y)=1:=f_3.$
\end{enumerate}
 Furthermore, in those places without declaration, the value of $b$ we take is $1$.  
\subsection{Numerical results for space convergence}
In order to verify the theoretical estimates in  \Cref{theorem-space}, we use the quadrature scheme \eqref{Uh} to produce $u_h$ in which we choose $N=M=200$ and $\tau=\frac{3}{20}$ such that the quadrature error is negligible. We compute $u_h$ under various uniform meshes whose mesh sizes vary from $h=2^{-3}$ to $h=2^{-7}$ in steps of $\frac{1}{2}$, and then calculate the errors $\|u_h-u_{\rm ref}\|$. The reference solution $u_{\rm ref}$ is obtained by means of the same approach as $u_h$ with finer mesh size $h=2^{-10}$.  

The numerical results of the three operators $\cA_1$, $\cA_2$ and $\cA_3$ are separately shown in \Cref{tab:space-2d-laplace}, \Cref{tab:space-2d-gro} and \Cref{tab:space-2d-gco}. Each table includes three parts that correspond to the three  different source terms $f_1,f_2$ and $f_3$ we presented at the beginning of this subsection. One can see that the numerical convergence orders fit well with the theoretical values.

For the variation of errors with respect to $b$, we choose $\cA_1$ to do the test. We fix $h=2^{-7}$ and compute the numerical solution $u_h(x,b)$ under different $b$. The plots in \Cref{fig:rangeb} give the results of $\|u_h(x,b)-u_{\rm ref}(x,b)\|$ as functions of $b$, in which different column corresponds to different source term. The reference solutions $u_{\rm ref}(x,b)$ under various $b$ are obtained on a finer mesh with mesh size $h=2^{-10}$. Recalling \Cref{remark-space} we know for $f=f_1$, $\|u-u_h\|\le c b^{-1}h^2\|\Delta f\|$ which is exactly verified by \Cref{fig:rangeb}(a) and \Cref{fig:rangeb}(d). The results for $f_2$ and $f_3$ are shown in \Cref{fig:rangeb}(b), \Cref{fig:rangeb}(e) and \Cref{fig:rangeb}(c) \Cref{fig:rangeb}(f), respectively. The solid lines are numerical results and the dashed lines represent predicted decaying rates. One can observe that the numerical results coincide with our theoretical predictions. Corresponding to \Cref{fig:rangeb}, we also give the results for $b < 1$ in \Cref{fig:rangeb_all}. One can see that the proposed scheme is robust with respect to  $b \in (0, \infty)$.

\begin{table}
\centering\scriptsize
\caption{Spatial errors with respect to $h$ for $\cA_1$}\label{tab:space-2d-laplace}
\begin{tabular}{rrrrrrrrrrrr}
\toprule
            &&\multicolumn{2}{c}{$\alpha=0.10$}
            &\multicolumn{2}{c}{$\alpha=0.30$}
            &\multicolumn{2}{c}{$\alpha=0.50$}
            &\multicolumn{2}{c}{$\alpha=0.70$}
            &\multicolumn{2}{c}{$\alpha=0.90$}
\\\cmidrule(lr){3-4}\cmidrule(lr){5-6}\cmidrule(lr){7-8}\cmidrule(lr){9-10}\cmidrule(lr){11-12}
      &  $N$ & $E_{L^2}$ &  conv. & $E_{L^2}$ &  conv. & $E_{L^2}$ &  conv. & $E_{L^2}$ &  conv. & $E_{L^2}$ &  conv.\\
\midrule\multirow{6}{*}{$f_1$}
    &      8 &  2.36e-04 &     -  &  1.78e-04 &     -  &  1.42e-04 &     -  &  1.10e-04 &     -  &  7.87e-05 &     - \\
    &     16 &  5.39e-05 &   2.13 &  4.25e-05 &   2.07 &  3.52e-05 &   2.02 &  2.75e-05 &   1.99 &  1.99e-05 &   1.98\\
    &     32 &  1.28e-05 &   2.07 &  1.04e-05 &   2.03 &  8.77e-06 &   2.01 &  6.88e-06 &   2.00 &  5.00e-06 &   2.00\\
    &     64 &  3.13e-06 &   2.04 &  2.59e-06 &   2.01 &  2.18e-06 &   2.00 &  1.72e-06 &   2.00 &  1.25e-06 &   2.00\\
    &    128 &  7.72e-07 &   2.02 &  6.42e-07 &   2.01 &  5.42e-07 &   2.01 &  4.25e-07 &   2.01 &  3.09e-07 &   2.01\\
\cmidrule{2-12}
&\multicolumn{2}{c}{theor. conv.}&   2.0 &           &   2.0 &           &   2.0 &           &   2.0 &           &   2.0\\
\midrule\multirow{6}{*}{$f_2$}
    &      8 &  5.49e-03 &     -  &  2.17e-03 &     -  &  9.56e-04 &     -  &  5.68e-04 &     -  &  3.80e-04 &     - \\
    &     16 &  2.40e-03 &   1.20 &  7.10e-04 &   1.61 &  2.49e-04 &   1.94 &  1.42e-04 &   2.00 &  9.59e-05 &   1.99\\
    &     32 &  1.06e-03 &   1.17 &  2.34e-04 &   1.60 &  6.44e-05 &   1.95 &  3.55e-05 &   2.00 &  2.40e-05 &   2.00\\
    &     64 &  4.73e-04 &   1.17 &  7.74e-05 &   1.60 &  1.66e-05 &   1.96 &  8.83e-06 &   2.01 &  5.99e-06 &   2.00\\
    &    128 &  2.10e-04 &   1.17 &  2.55e-05 &   1.60 &  4.24e-06 &   1.97 &  2.18e-06 &   2.02 &  1.48e-06 &   2.01\\
\cmidrule{2-12}
& \multicolumn{2}{c}{theor. conv.} &   1.2 &           &   1.6 &           &   2.0 &           &   2.0 &           &   2.0\\
\midrule\multirow{6}{*}{$f_3$}
    &      8 &  1.20e-01 &     -  &  3.96e-02 &     -  &  1.26e-02 &     -  &  4.73e-03 &     -  &  2.36e-03 &     - \\
    &     16 &  7.73e-02 &   0.63 &  1.95e-02 &   1.03 &  4.60e-03 &   1.45 &  1.35e-03 &   1.81 &  6.09e-04 &   1.95\\
    &     32 &  4.94e-02 &   0.64 &  9.38e-03 &   1.05 &  1.65e-03 &   1.48 &  3.77e-04 &   1.84 &  1.54e-04 &   1.98\\
    &     64 &  3.13e-02 &   0.66 &  4.46e-03 &   1.07 &  5.87e-04 &   1.49 &  1.04e-04 &   1.86 &  3.87e-05 &   1.99\\
    &    128 &  1.95e-02 &   0.68 &  2.09e-03 &   1.09 &  2.07e-04 &   1.51 &  2.82e-05 &   1.88 &  9.62e-06 &   2.01\\
\cmidrule{2-12}
&\multicolumn{2}{c}{theor. conv.}&   0.7 &           &   1.1 &           &   1.5 &        &   1.9 &          &   2.0\\
\bottomrule
\end{tabular}
\end{table}
\begin{table}
\centering\scriptsize
\caption{Spatial errors with respect to $h$ for $\cA_2$}\label{tab:space-2d-gro}
\begin{tabular}{rrrrrrrrrrrr}
\toprule
           &&\multicolumn{2}{c}{$\alpha=0.10$}
            &\multicolumn{2}{c}{$\alpha=0.30$}
            &\multicolumn{2}{c}{$\alpha=0.50$}
            &\multicolumn{2}{c}{$\alpha=0.70$}
            &\multicolumn{2}{c}{$\alpha=0.90$}
\\\cmidrule(lr){3-4}\cmidrule(lr){5-6}\cmidrule(lr){7-8}\cmidrule(lr){9-10}\cmidrule(lr){11-12}
       & $N$ & $E_{L^2}$ &  conv. & $E_{L^2}$ &  conv. & $E_{L^2}$ &  conv. & $E_{L^2}$ &  conv. & $E_{L^2}$ &  conv.\\
\midrule\multirow{6}{*}{$f_1$}
    &      8 &  2.46e-04 &     -  &  2.10e-04 &     -  &  1.78e-04 &     -  &  1.37e-04 &     -  &  9.61e-05 &     - \\
    &     16 &  5.66e-05 &   2.12 &  5.13e-05 &   2.03 &  4.51e-05 &   1.98 &  3.52e-05 &   1.96 &  2.50e-05 &   1.94\\
    &     32 &  1.35e-05 &   2.07 &  1.27e-05 &   2.01 &  1.13e-05 &   1.99 &  8.90e-06 &   1.99 &  6.32e-06 &   1.98\\
    &     64 &  3.29e-06 &   2.04 &  3.16e-06 &   2.01 &  2.83e-06 &   2.00 &  2.23e-06 &   2.00 &  1.58e-06 &   2.00\\
    &    128 &  8.11e-07 &   2.02 &  7.84e-07 &   2.01 &  7.02e-07 &   2.01 &  5.51e-07 &   2.01 &  3.92e-07 &   2.01\\
\cmidrule{2-12}
& \multicolumn{2}{c}{theor. conv.} &   2.0 &           &   2.0 &           &   2.0 &           &   2.0 &           &   2.0\\
\midrule\multirow{6}{*}{$f_2$}
    &      8 &  5.58e-03 &     -  &  2.38e-03 &     -  &  1.19e-03 &     -  &  7.31e-04 &     -  &  4.77e-04 &     - \\
    &     16 &  2.43e-03 &   1.20 &  7.76e-04 &   1.62 &  3.14e-04 &   1.92 &  1.87e-04 &   1.97 &  1.23e-04 &   1.95\\
    &     32 &  1.07e-03 &   1.18 &  2.53e-04 &   1.62 &  8.14e-05 &   1.95 &  4.71e-05 &   1.99 &  3.11e-05 &   1.99\\
    &     64 &  4.78e-04 &   1.17 &  8.27e-05 &   1.61 &  2.09e-05 &   1.96 &  1.18e-05 &   2.00 &  7.77e-06 &   2.00\\
    &    128 &  2.12e-04 &   1.17 &  2.70e-05 &   1.62 &  5.32e-06 &   1.98 &  2.91e-06 &   2.01 &  1.92e-06 &   2.01\\
\cmidrule{2-12}
&  \multicolumn{2}{c}{theor. conv.}&   1.2 &           &   1.6 &           &   2.0 &           &   2.0 &        &   2.0\\
\midrule\multirow{6}{*}{$f_3$}
    &      8 &  1.21e-01 &     -  &  4.15e-02 &     -  &  1.43e-02 &     -  &  5.92e-03 &     -  &  3.03e-03 &     - \\
    &     16 &  7.78e-02 &   0.63 &  2.03e-02 &   1.03 &  5.19e-03 &   1.46 &  1.70e-03 &   1.80 &  7.95e-04 &   1.93\\
    &     32 &  4.98e-02 &   0.64 &  9.77e-03 &   1.06 &  1.84e-03 &   1.49 &  4.71e-04 &   1.85 &  2.02e-04 &   1.97\\
    &     64 &  3.15e-02 &   0.66 &  4.64e-03 &   1.07 &  6.49e-04 &   1.51 &  1.29e-04 &   1.87 &  5.09e-05 &   1.99\\
    &    128 &  1.96e-02 &   0.68 &  2.17e-03 &   1.09 &  2.27e-04 &   1.52 &  3.46e-05 &   1.89 &  1.26e-05 &   2.01\\
\cmidrule{2-12}
& \multicolumn{2}{c}{theor. conv.} &   0.7 &           &   1.1 &           &   1.5 &           &   1.9 &           &   2.0\\
\bottomrule
\end{tabular}
\end{table}

\begin{table}
\centering\scriptsize
\caption{Spatial errors with respect to $h$ for $\cA_3$}\label{tab:space-2d-gco}
\begin{tabular}{rrrrrrrrrrrr}
    \toprule
             &&\multicolumn{2}{c}{$\alpha=0.10$}
             &\multicolumn{2}{c}{$\alpha=0.30$}
             &\multicolumn{2}{c}{$\alpha=0.50$}
             &\multicolumn{2}{c}{$\alpha=0.70$}
             &\multicolumn{2}{c}{$\alpha=0.90$}
\\\cmidrule(lr){3-4}\cmidrule(lr){5-6}\cmidrule(lr){7-8}\cmidrule(lr){9-10}\cmidrule(lr){11-12}
       & $N$ & $E_{L^2}$ &  conv. & $E_{L^2}$ &  conv. & $E_{L^2}$ &  conv. & $E_{L^2}$ &  conv. & $E_{L^2}$ &  conv.\\
\midrule\multirow{6}{*}{$f_1$}
        &    8 &  2.29e-04 &     -  &  1.71e-04 &     -  &  1.28e-04 &     -  &  8.50e-05 &     -  &  5.10e-05 &     - \\
        &   16 &  5.23e-05 &   2.13 &  4.09e-05 &   2.07 &  3.13e-05 &   2.03 &  2.10e-05 &   2.02 &  1.27e-05 &   2.01\\
        &   32 &  1.25e-05 &   2.07 &  1.01e-05 &   2.02 &  7.76e-06 &   2.01 &  5.22e-06 &   2.01 &  3.16e-06 &   2.00\\
        &   64 &  3.04e-06 &   2.03 &  2.50e-06 &   2.01 &  1.93e-06 &   2.01 &  1.30e-06 &   2.00 &  7.88e-07 &   2.00\\
        &  128 &  7.52e-07 &   2.02 &  6.20e-07 &   2.01 &  4.79e-07 &   2.01 &  3.22e-07 &   2.01 &  1.95e-07 &   2.01\\
\cmidrule{2-12}
    &\multicolumn{2}{c}{theor. conv.}&   2.0 &           &   2.0 &           &   2.0 &           &   2.0 &           &   2.0\\
\midrule\multirow{6}{*}{$f_2$}
    &        8 &  5.26e-03 &     -  &  2.00e-03 &     -  &  9.24e-04 &     -  &  5.07e-04 &     -  &  2.81e-04 &     - \\
    &       16 &  2.28e-03 &   1.21 &  6.30e-04 &   1.66 &  2.34e-04 &   1.98 &  1.24e-04 &   2.03 &  6.91e-05 &   2.03\\
    &       32 &  1.01e-03 &   1.18 &  2.03e-04 &   1.64 &  5.94e-05 &   1.98 &  3.06e-05 &   2.02 &  1.72e-05 &   2.01\\
    &       64 &  4.48e-04 &   1.17 &  6.60e-05 &   1.62 &  1.51e-05 &   1.97 &  7.61e-06 &   2.01 &  4.28e-06 &   2.01\\
    &      128 &  1.98e-04 &   1.17 &  2.16e-05 &   1.61 &  3.84e-06 &   1.98 &  1.88e-06 &   2.02 &  1.06e-06 &   2.01\\
\cmidrule{2-12}
    &\multicolumn{2}{c}{theor. conv.}&   1.2 &           &   1.6 &           &   2.0 &           &   2.0 &           &   2.0\\
\midrule\multirow{6}{*}{$f_3$}
    &        8 &  1.14e-01 &     -  &  3.47e-02 &     -  &  1.13e-02 &     -  &  4.48e-03 &     -  &  2.07e-03 &     - \\
    &       16 &  7.34e-02 &   0.64 &  1.68e-02 &   1.05 &  3.97e-03 &   1.51 &  1.25e-03 &   1.84 &  5.27e-04 &   1.97\\
    &       32 &  4.69e-02 &   0.65 &  8.00e-03 &   1.07 &  1.38e-03 &   1.52 &  3.41e-04 &   1.87 &  1.33e-04 &   1.99\\
    &       64 &  2.96e-02 &   0.66 &  3.79e-03 &   1.08 &  4.84e-04 &   1.52 &  9.24e-05 &   1.89 &  3.34e-05 &   1.99\\
    &      128 &  1.84e-02 &   0.69 &  1.77e-03 &   1.10 &  1.69e-04 &   1.52 &  2.48e-05 &   1.90 &  8.29e-06 &   2.01\\
\cmidrule{2-12}
    &\multicolumn{2}{c}{theor. conv.}&   0.7 &           &   1.1 &           &   1.5 &           &   1.9 &           &   2.0\\
    \bottomrule
\end{tabular}
\end{table}

\begin{figure}
    \centering
    \subfigure[\bf $f=f_1$]{\includegraphics[width=.32\textwidth]{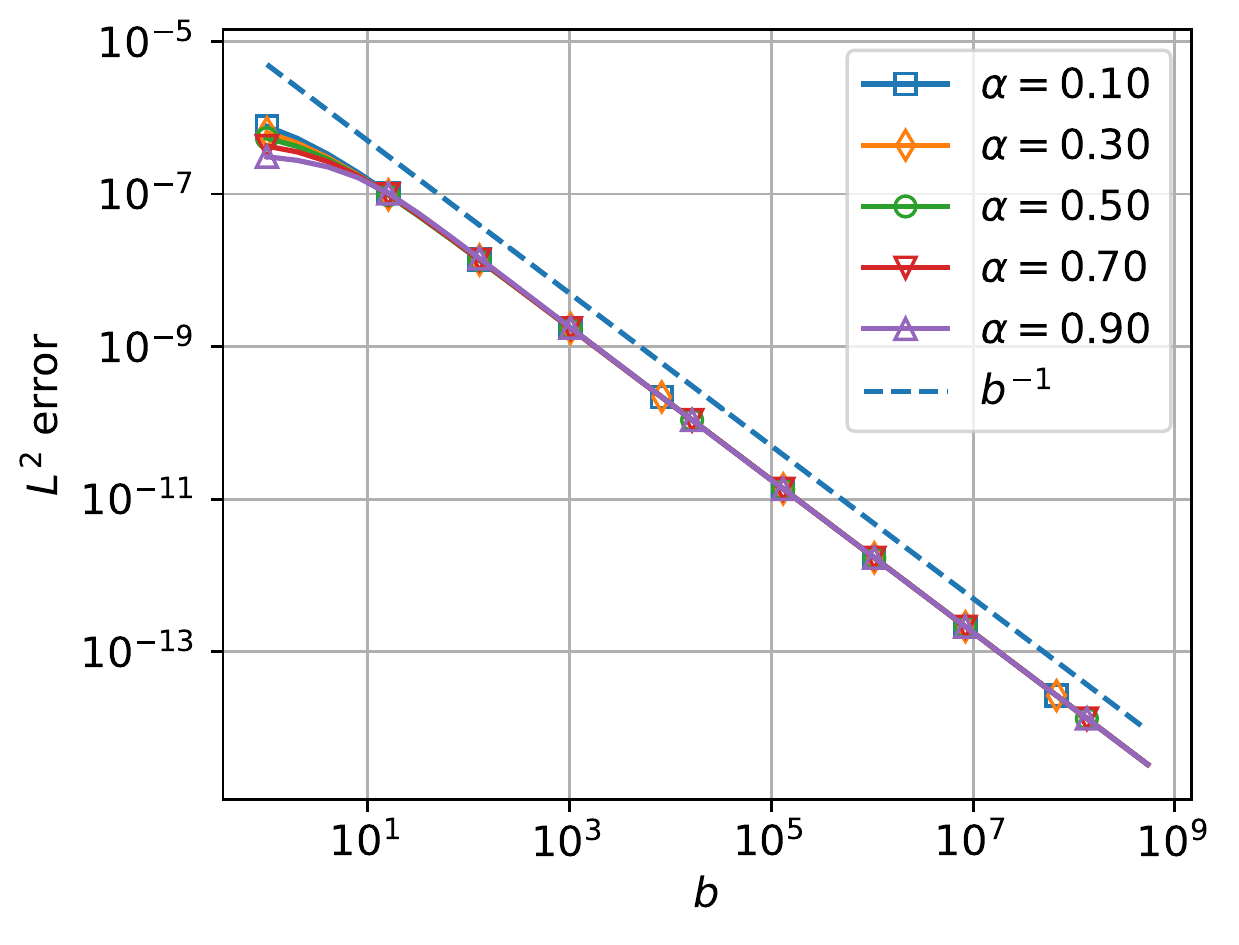}}
    \subfigure[\bf $f=f_2$]{\includegraphics[width=.32\textwidth]{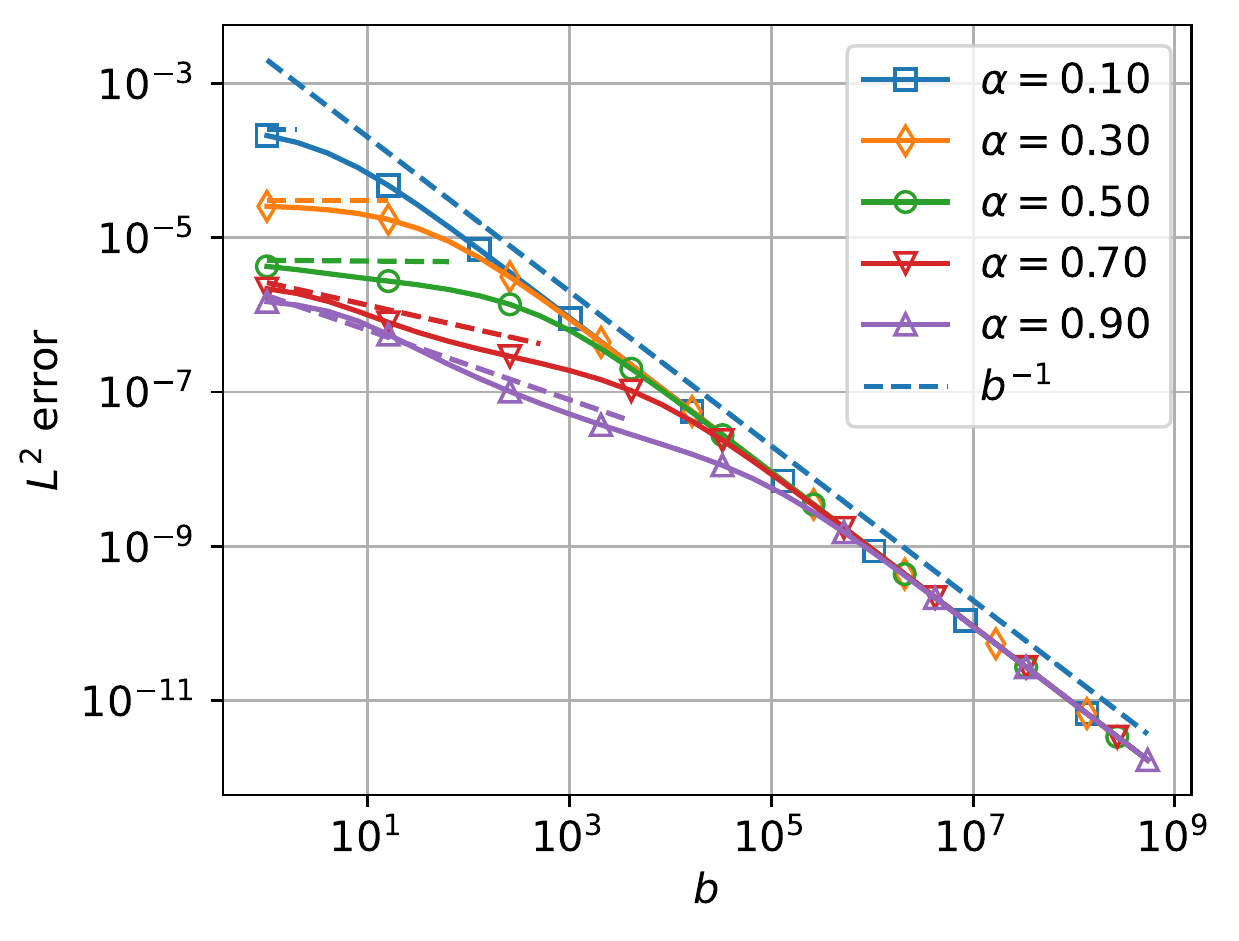}}
    \subfigure[\bf $f=f_3$]{\includegraphics[width=.32\textwidth]{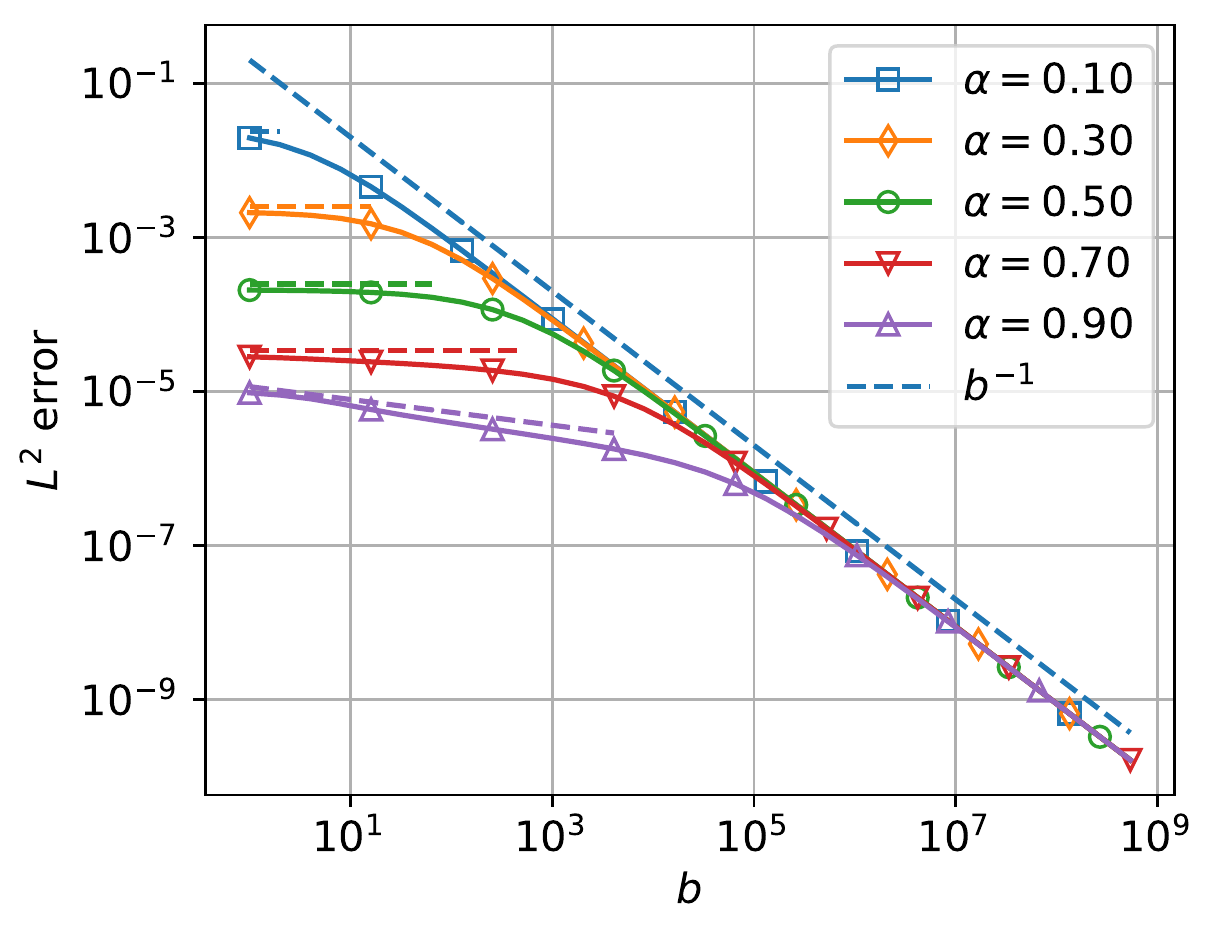}} \\
    \subfigure[\bf $f=f_1$]{\includegraphics[width=.32\textwidth]{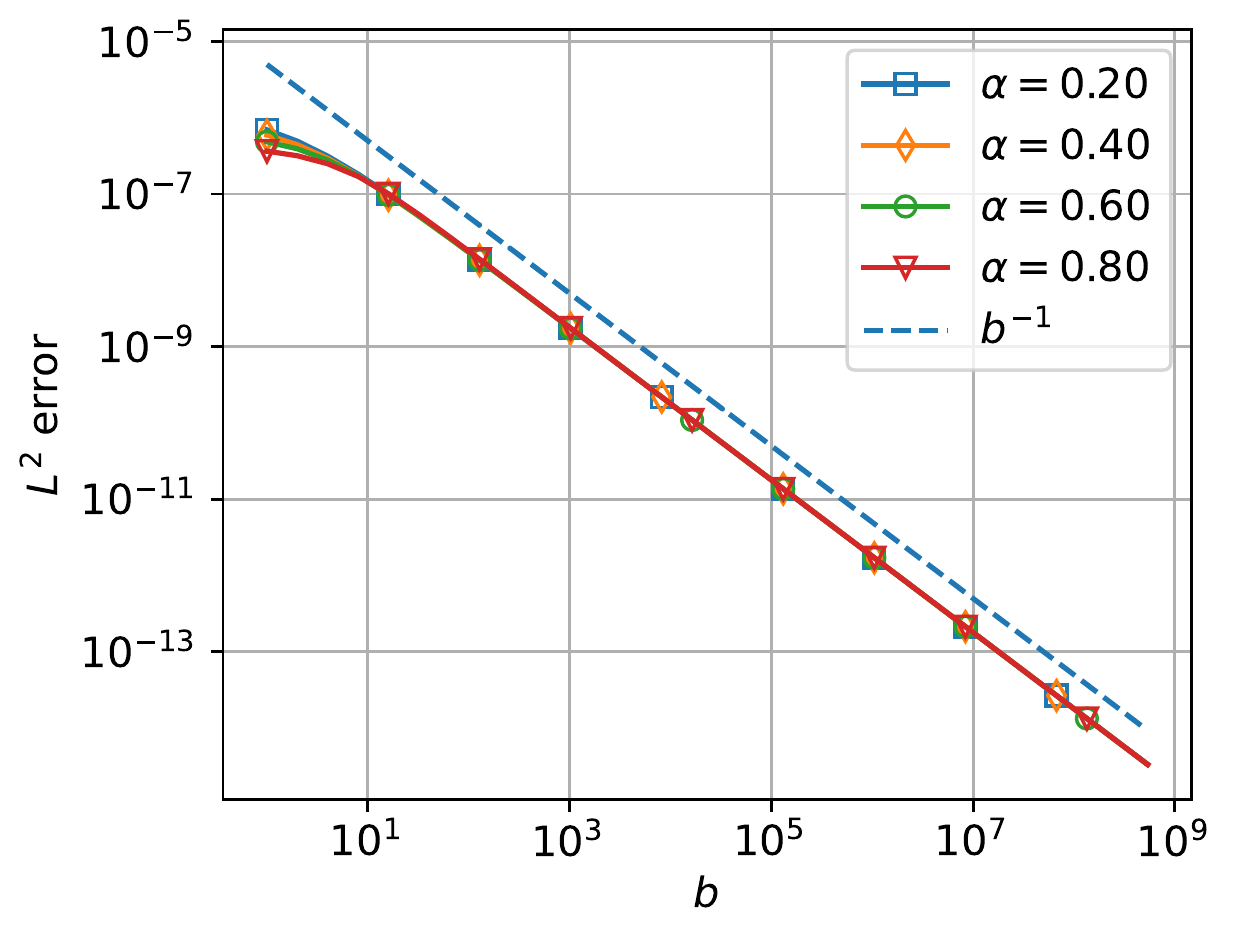}}
    \subfigure[\bf $f=f_2$]{\includegraphics[width=.32\textwidth]{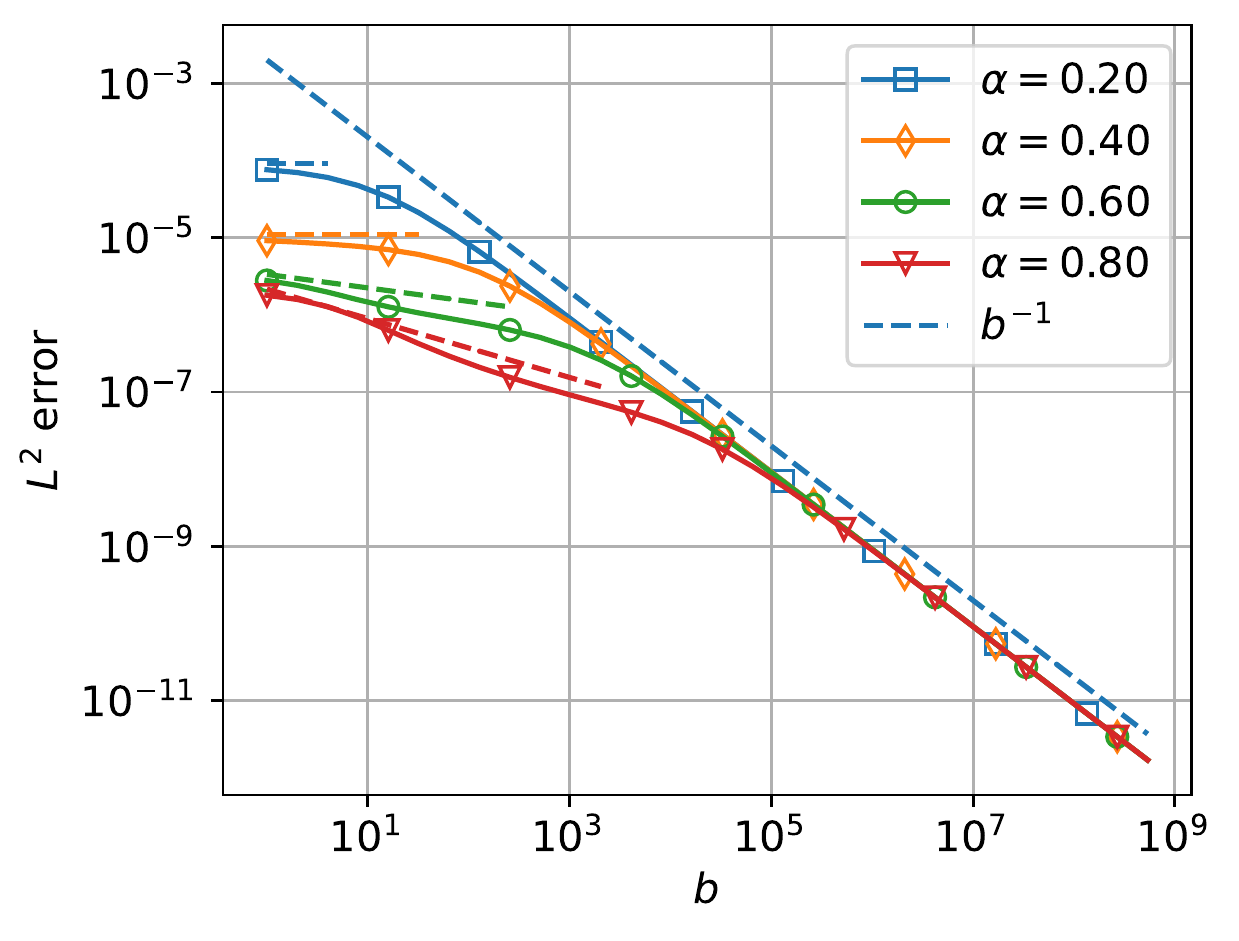}}
    \subfigure[\bf $f=f_3$]{\includegraphics[width=.32\textwidth]{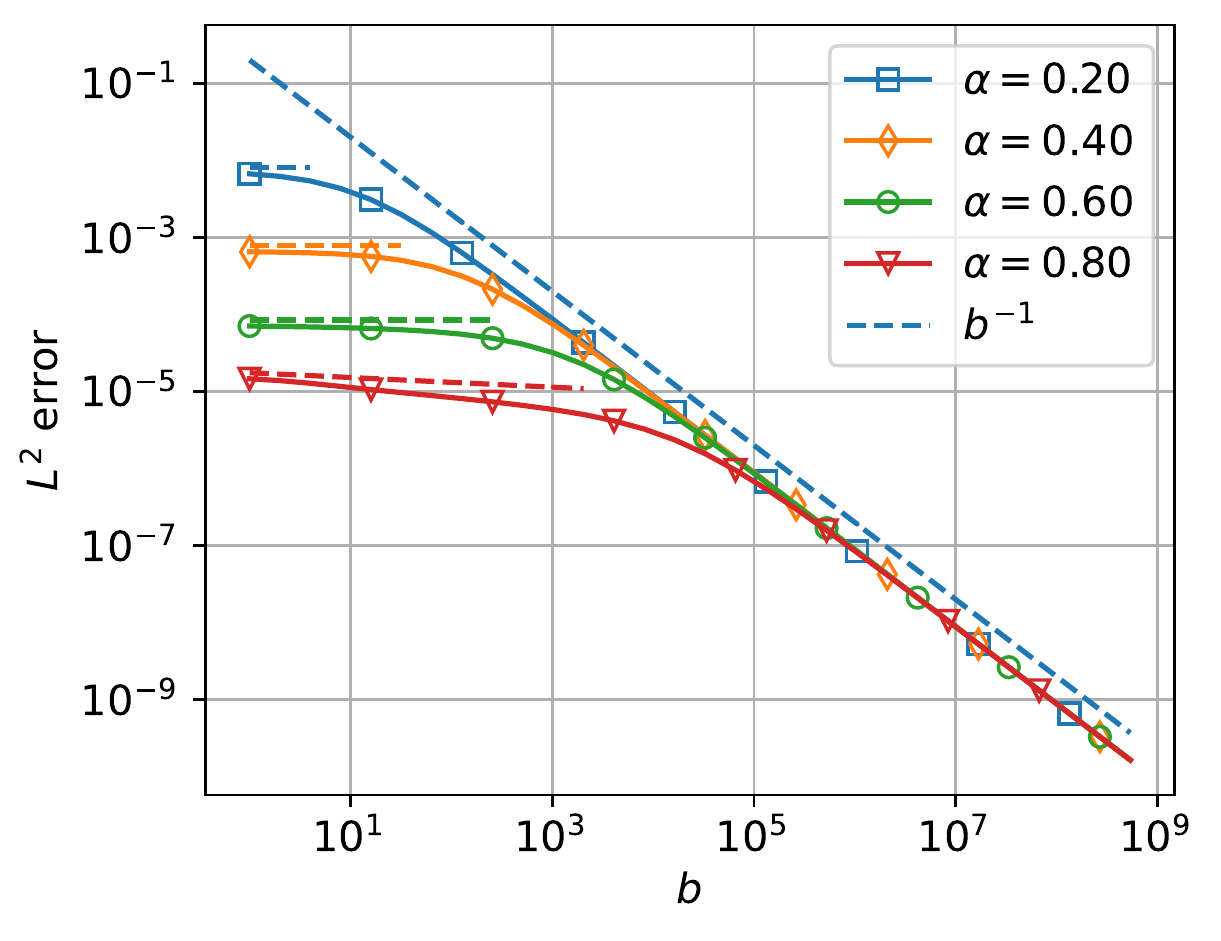}} \\
    \caption{The solid lines are spatial errors with respect to $b$ under different $f$. The dashed lines are theoretical predictions up to a constant multiplier.}\label{fig:rangeb}
\end{figure}

\begin{figure}
    \centering
    \subfigure[\bf $f=f_1$]{\includegraphics[width=.32\textwidth]{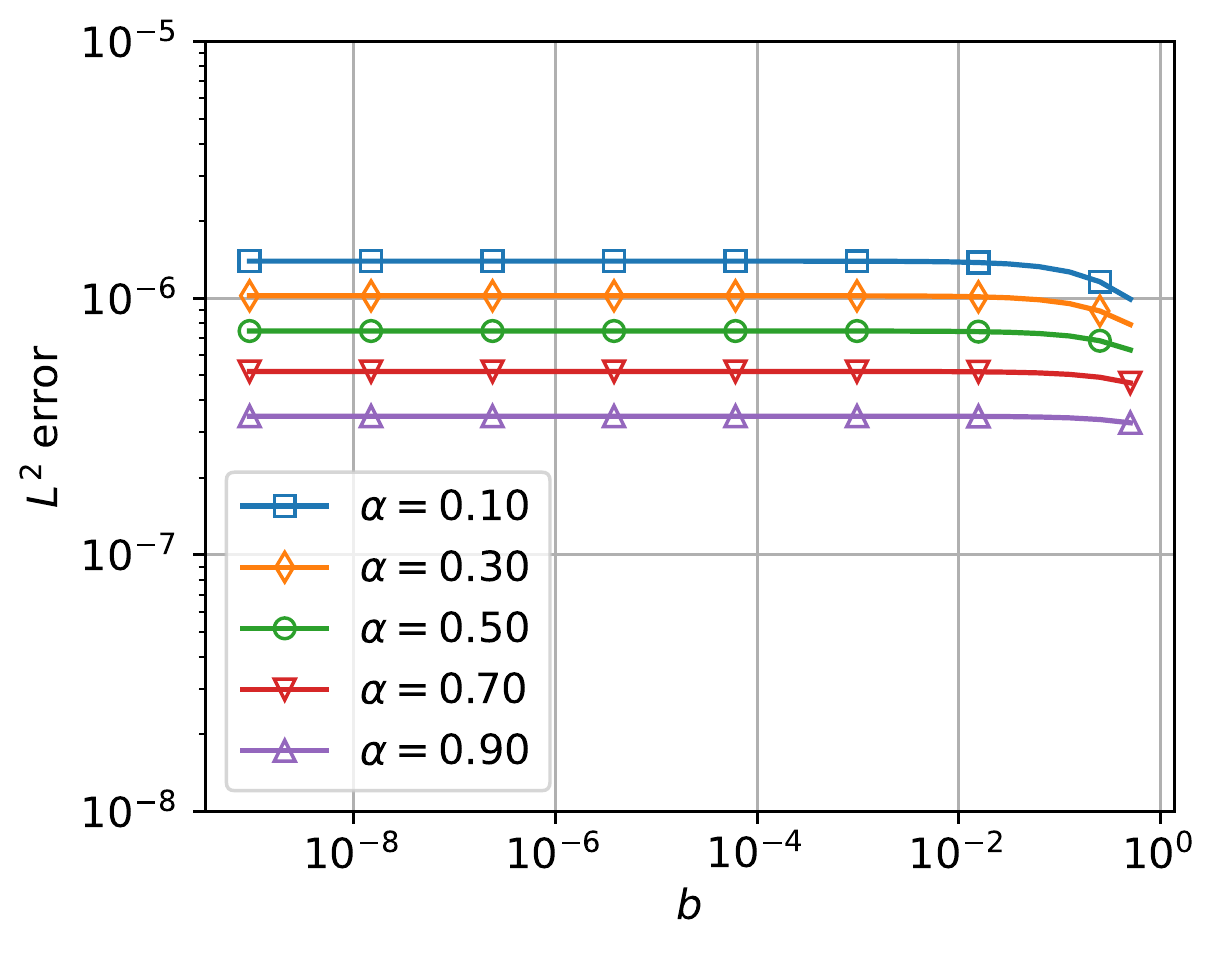}}
    \subfigure[\bf $f=f_2$]{\includegraphics[width=.32\textwidth]{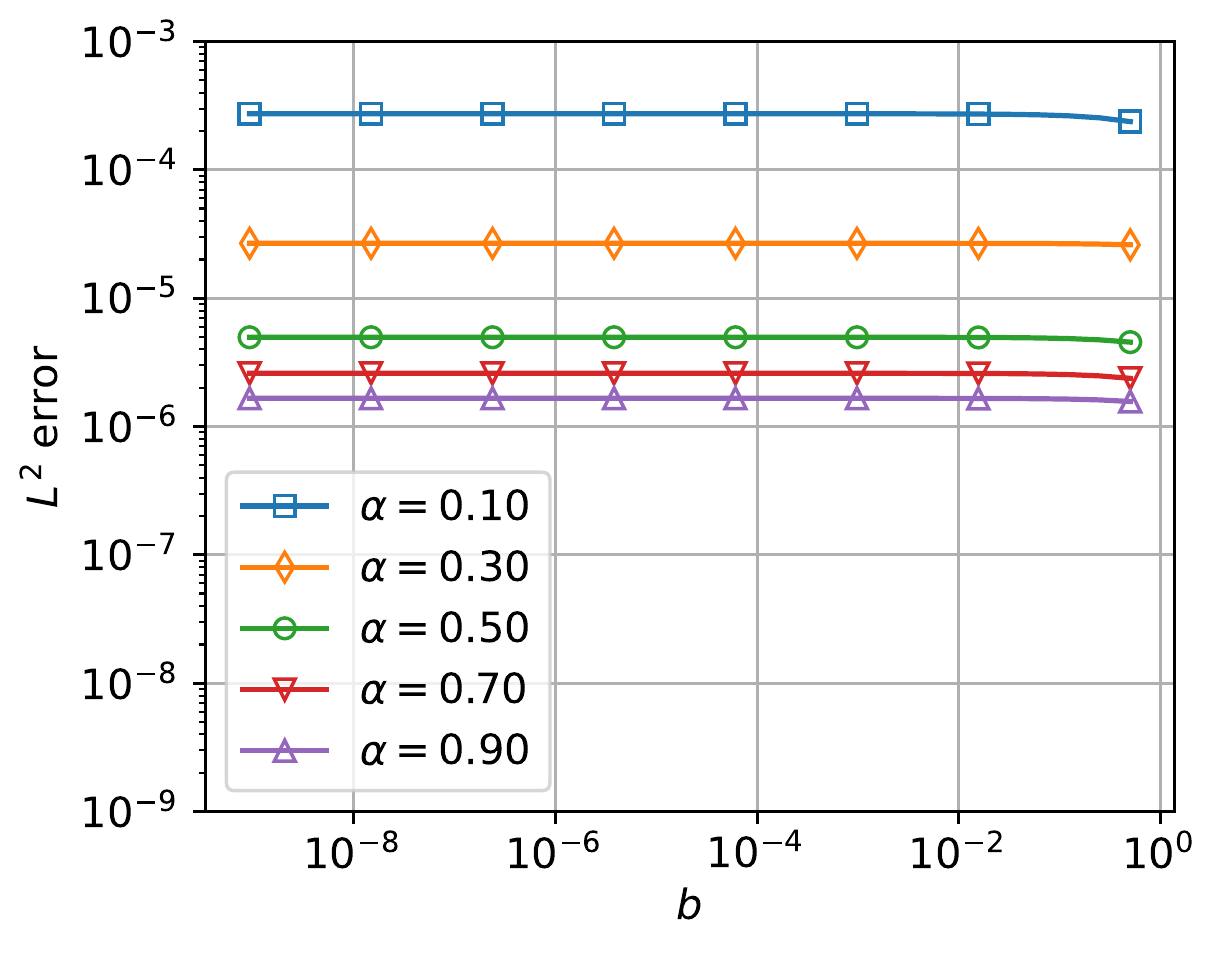}}
    \subfigure[\bf $f=f_3$]{\includegraphics[width=.32\textwidth]{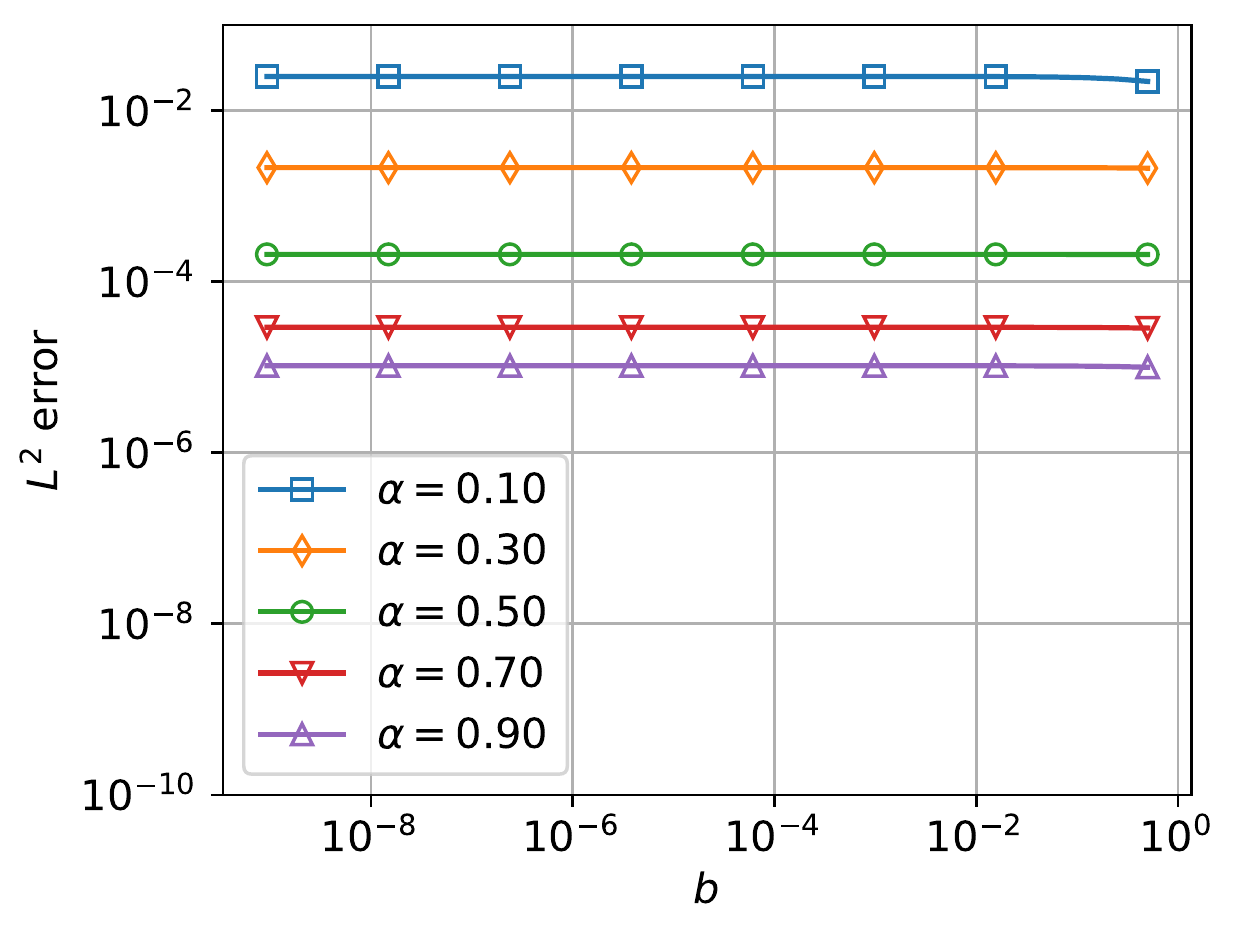}} \\
    \subfigure[\bf $f=f_1$]{\includegraphics[width=.32\textwidth]{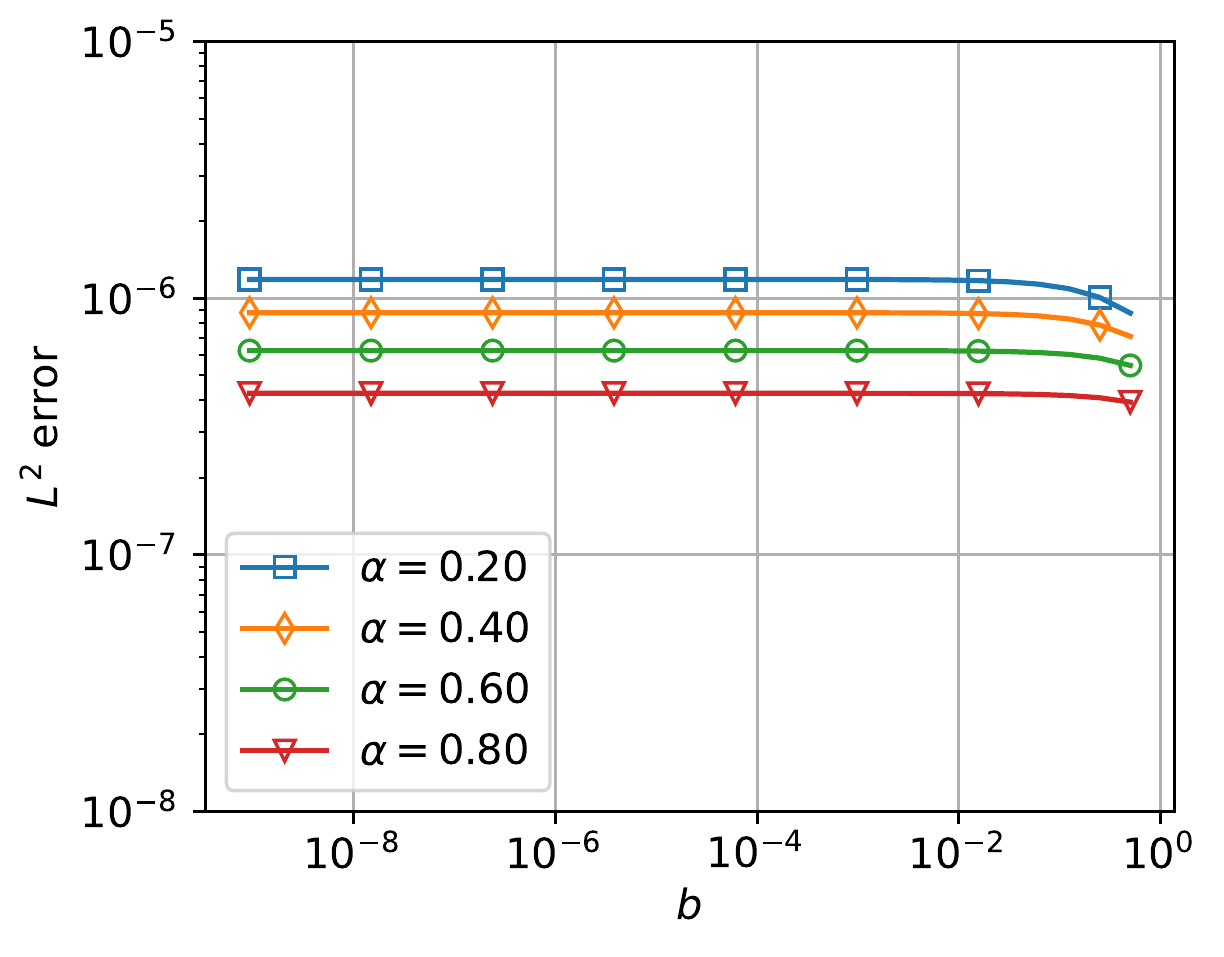}}
    \subfigure[\bf $f=f_2$]{\includegraphics[width=.32\textwidth]{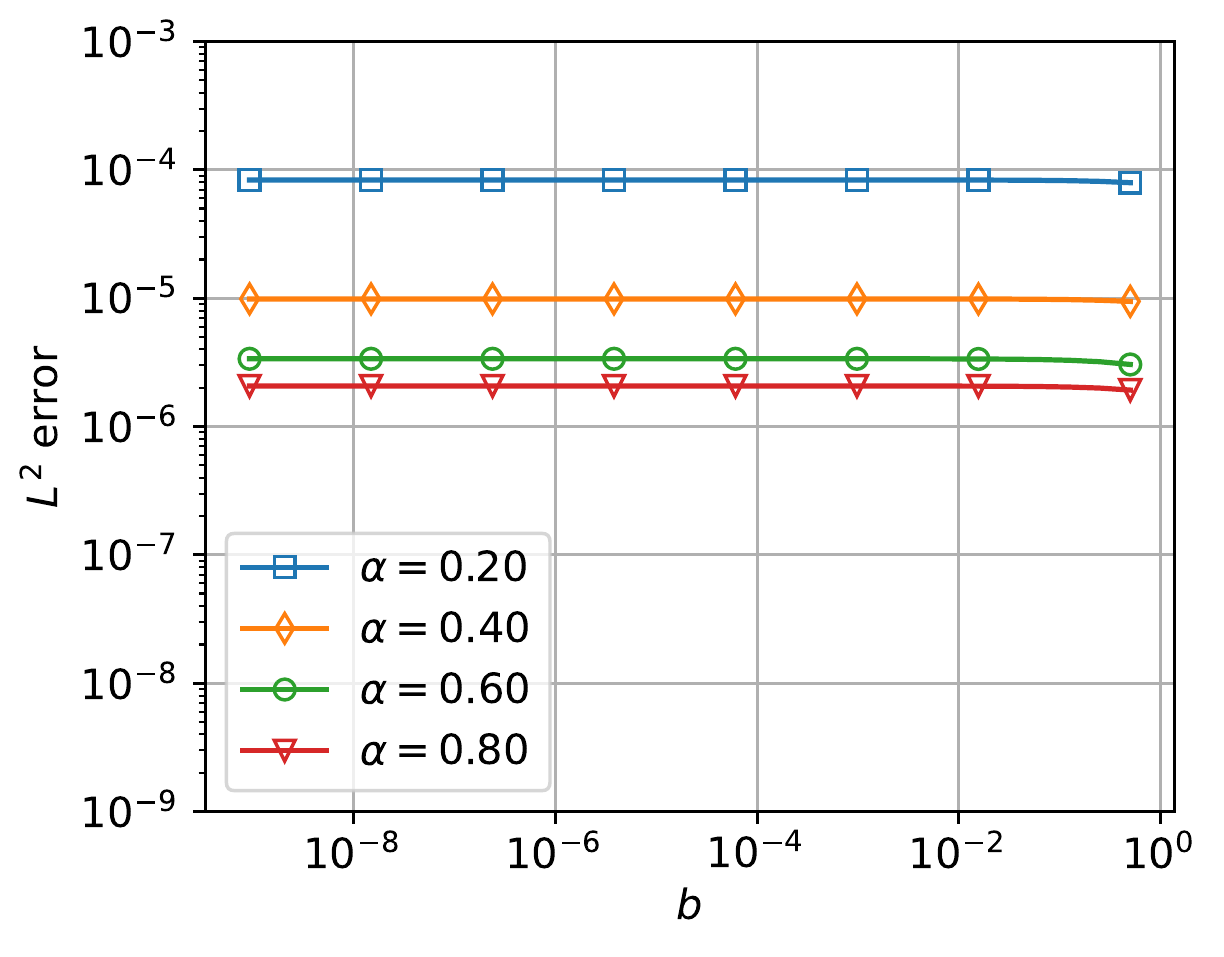}}
    \subfigure[\bf $f=f_3$]{\includegraphics[width=.32\textwidth]{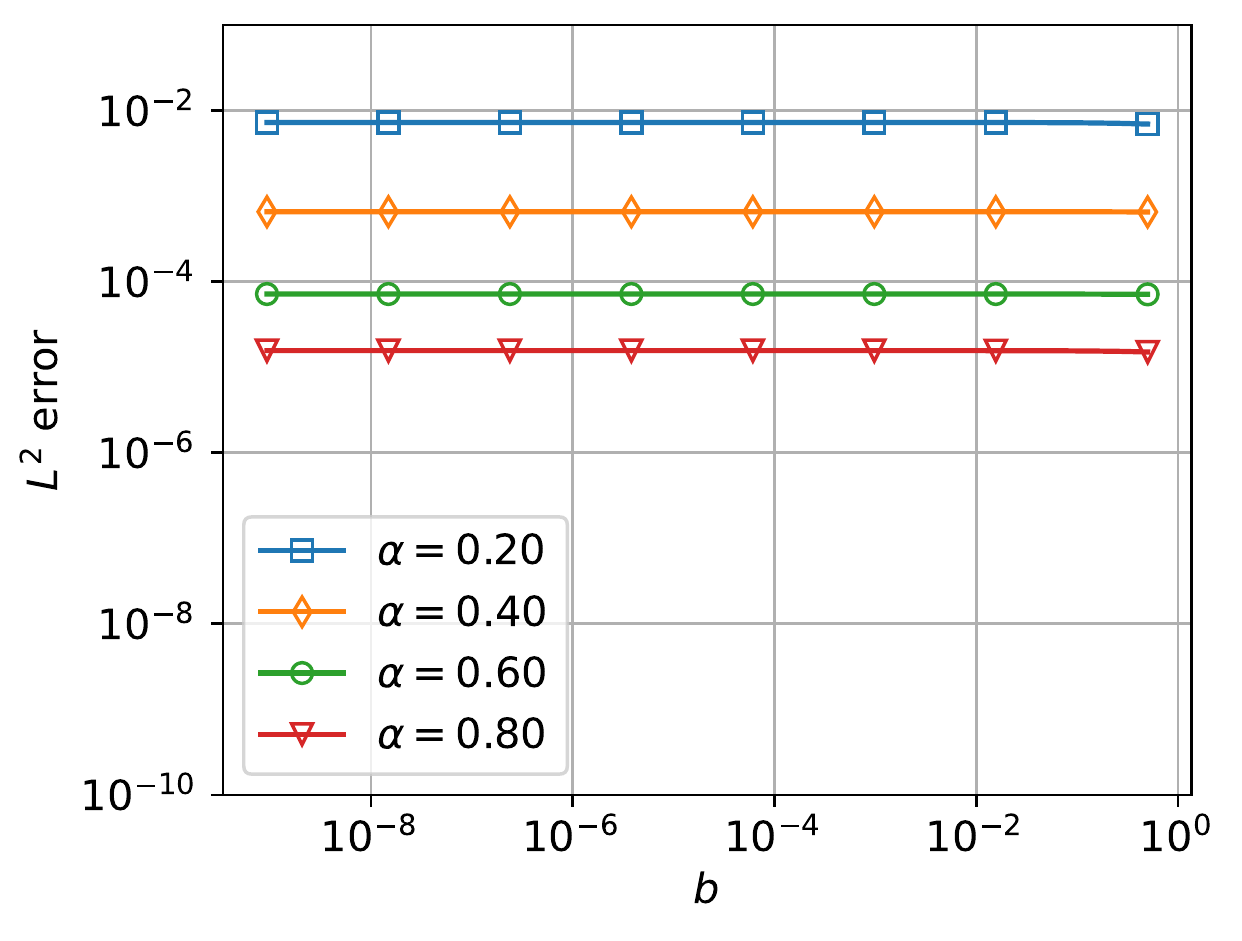}} \\
    \caption{The solid lines are spatial errors with respect to $b \in [2^{-30}, 2^{-1}]$ under different $f$.}\label{fig:rangeb_all}
\end{figure}

\subsection{Numerical results for the quadrature}
In this subsection, we shall verify \Cref{theorem-quadrature}, say, the error $\|u_h-u_{h,\tau}\|$ with respect to the quadrature step size $\tau$. To this end, we take $\cA_1$ and $\cA_3$ with different source terms $f_1,f_2$ and $f_3$ to do the test. The `semidiscrete' solution $u_h$ in all cases are approximated by the numerical solution $U_{h,\tau'}^{M',N'}$ with  $M'=N'=400$, $\tau'=\frac{3}{40}$. Numerical solutions are obtained by fixing  $M\tau=N\tau=30$ and varying $\tau$. For $\cA_1$ we clearly know $\beta=0$ and for $\cA_3$, computations show  $\beta\approx 1.778$ so $\arctan\beta\approx 1.059$. \Cref{fig:quad-laplace-split} and \Cref{fig:quad-gco-split} plot the quadrature errors for different $\alpha$ as functions of $N(M)$ where the solid lines are numerical results and the dashed lines are $\mathcal{O}(e^{-2\pi\min(\kappa_1,\kappa_2)/\tau})$. One can observe that, for a given value of $\alpha$, our theoretical estimates are sharp up to a constant $C(\tau)$ which is insignificant compared with the exponentially decaying term even in the worst case scenario $C(\tau)=\mathcal{O}(\tau^{-1}(1+|\ln \tau|))$.

By examining \Cref{fig:quad-laplace-split} and \Cref{fig:quad-gco-split}, we can see that for $\cA_1$ the quadrature error decays fastest when $\alpha=0.5$; while for $\cA_3$ the fastest decaying lines are corresponding to $\alpha=0.6$. This is consistent  with our theoretical analysis. In fact, appealing to \Cref{theorem-quadrature} we know the quadrature scheme achieves the fastest covergence rate when $\kappa_1=\kappa_2$ regardless of the smoothness of $f$, which gives $\alpha=0.5$ for $\cA_1$ and $\alpha\approx 0.60$ for $\cA_3$.

\begin{figure}
    \centering
    \subfigure[\bf $f=f_1$]{\includegraphics[width=.32\textwidth]{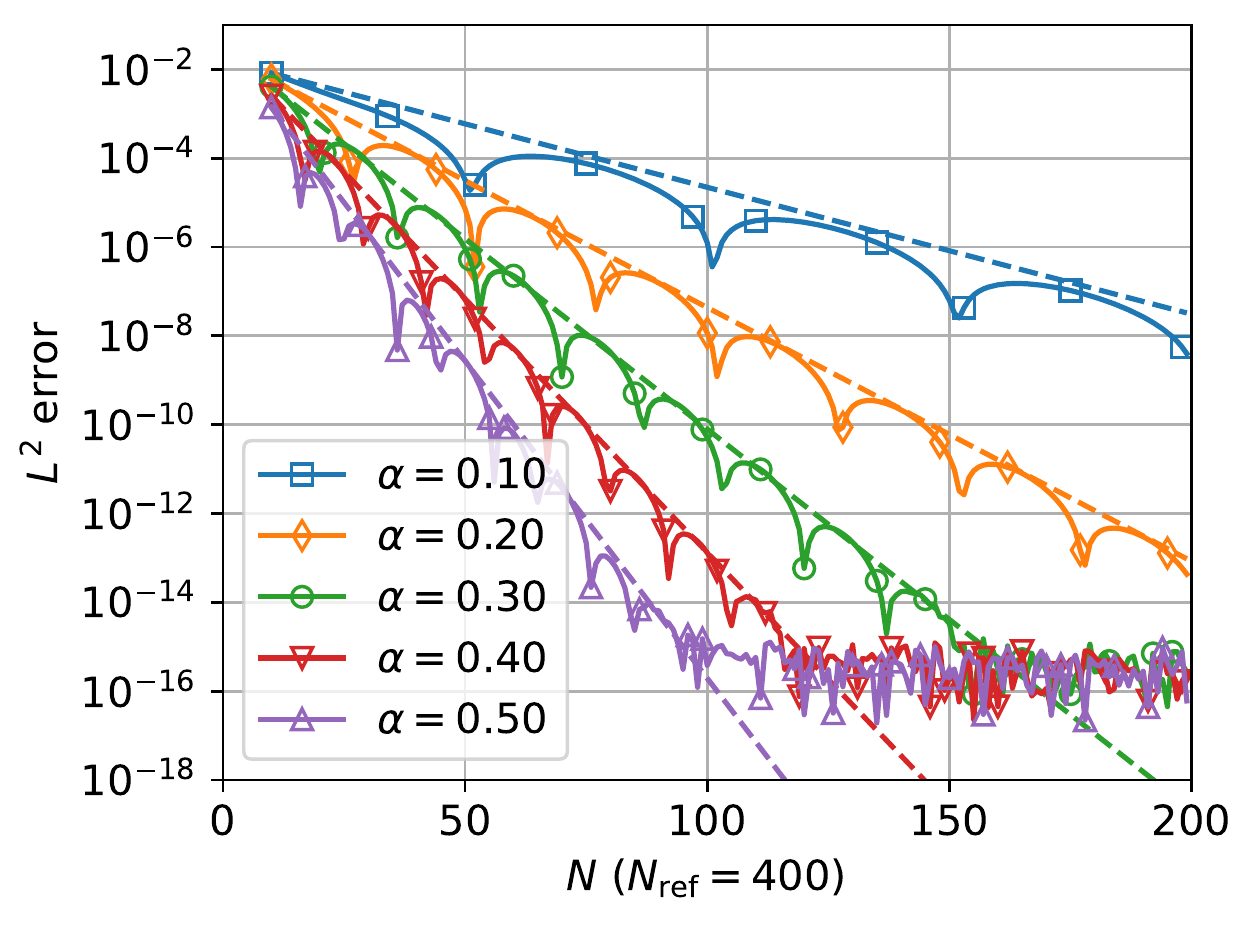}}
    \subfigure[\bf $f=f_2$]{\includegraphics[width=.32\textwidth]{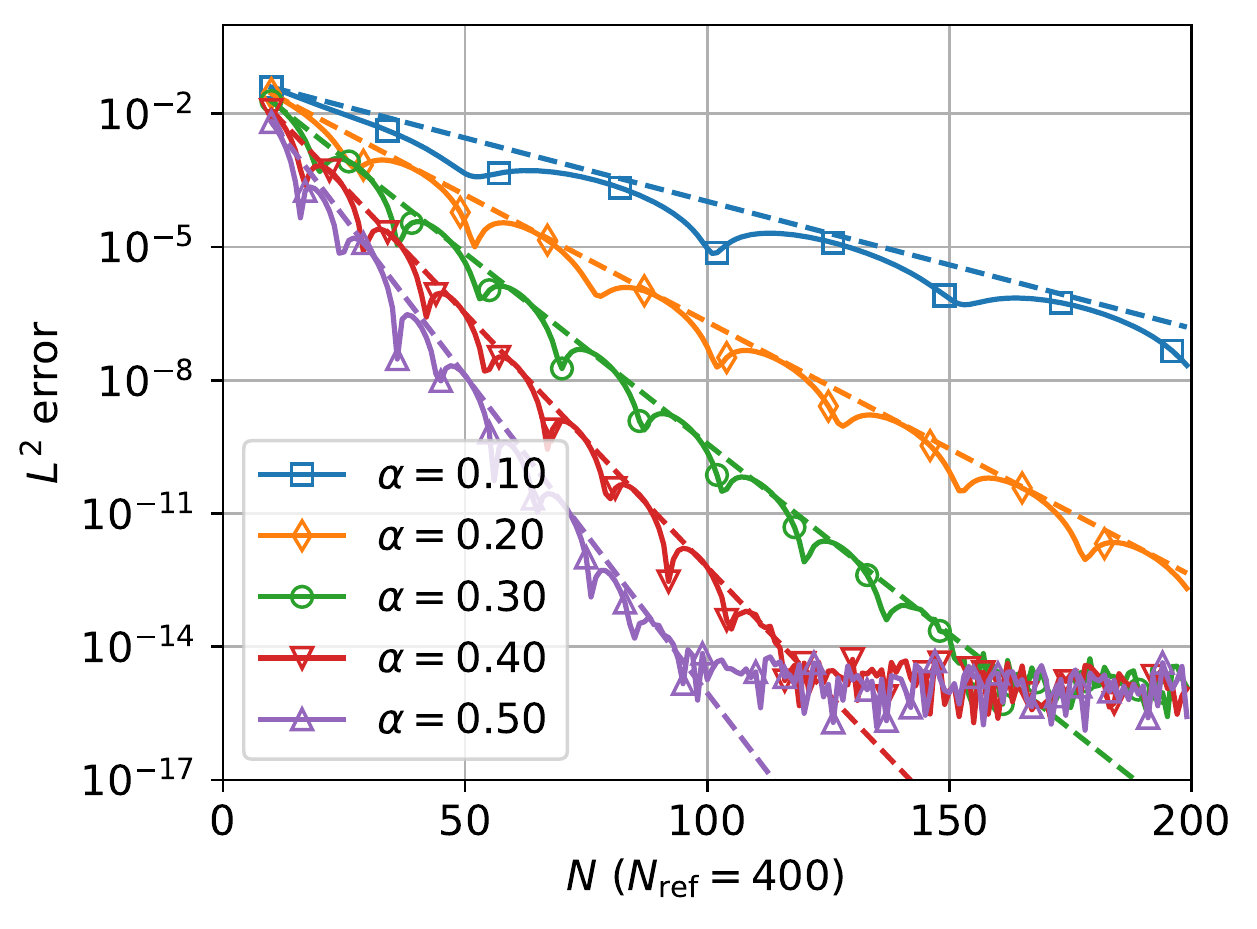}}
    \subfigure[\bf $f=f_3$]{\includegraphics[width=.32\textwidth]{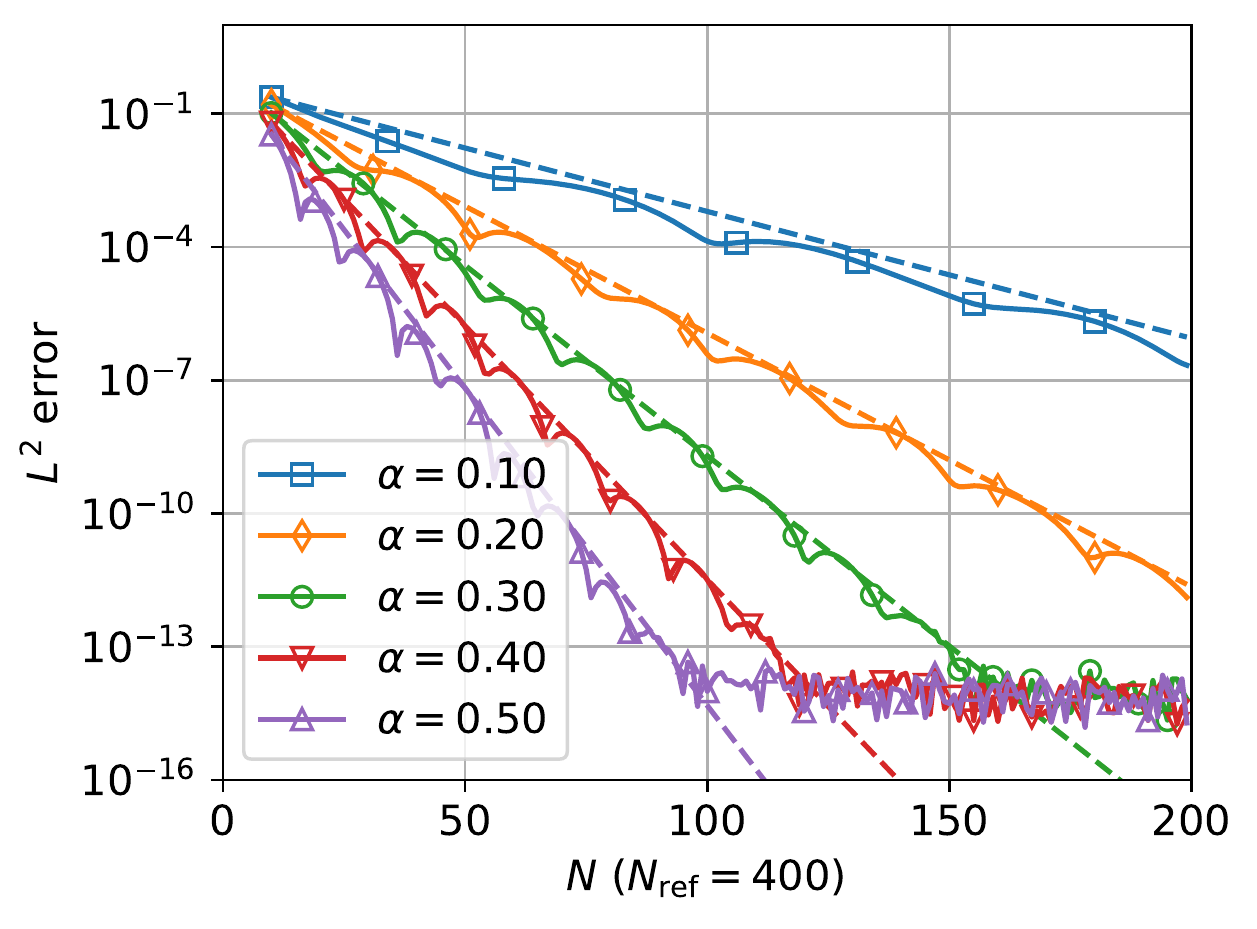}} \\
    \subfigure[\bf $f=f_1$]{\includegraphics[width=.32\textwidth]{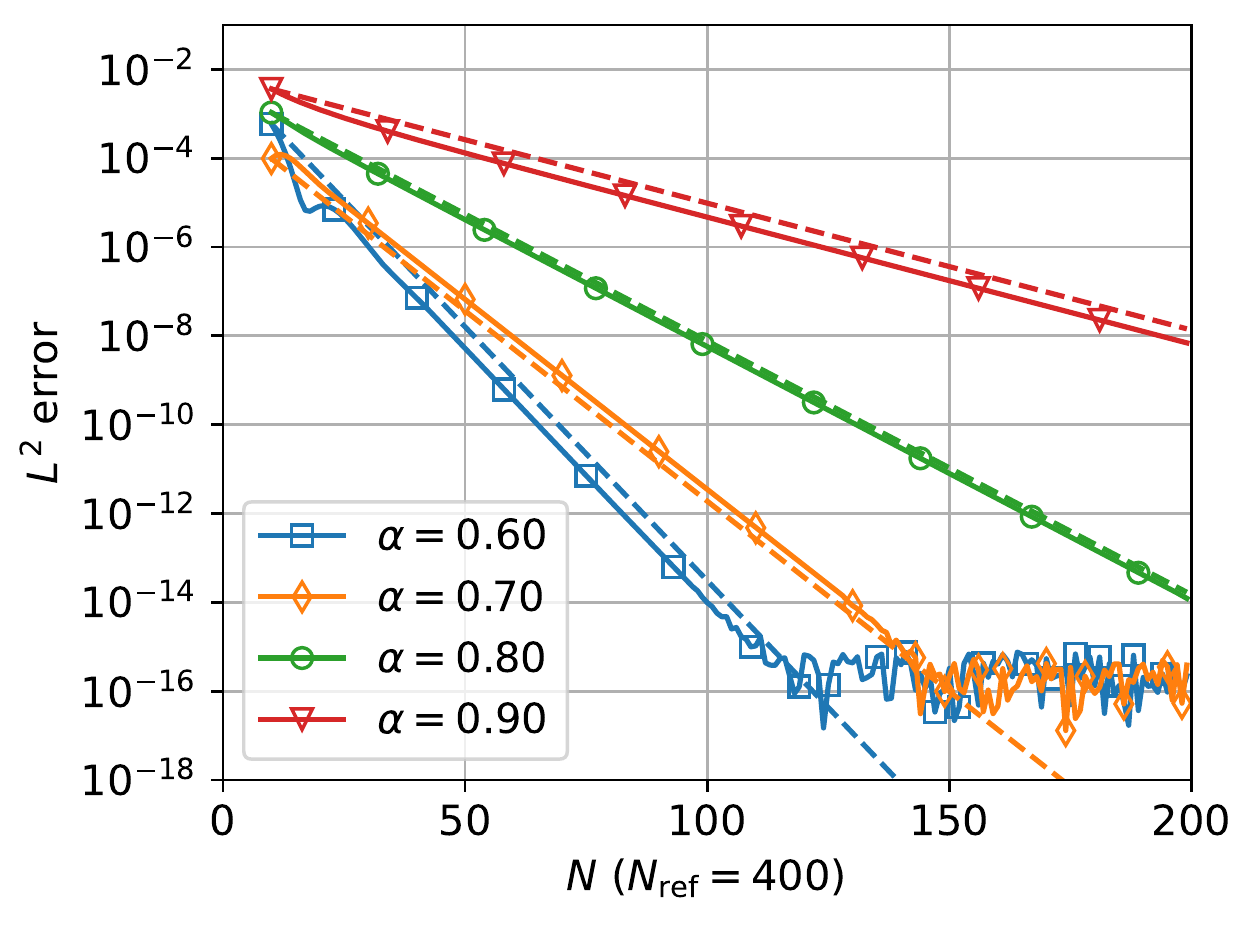}}
    \subfigure[\bf $f=f_2$]{\includegraphics[width=.32\textwidth]{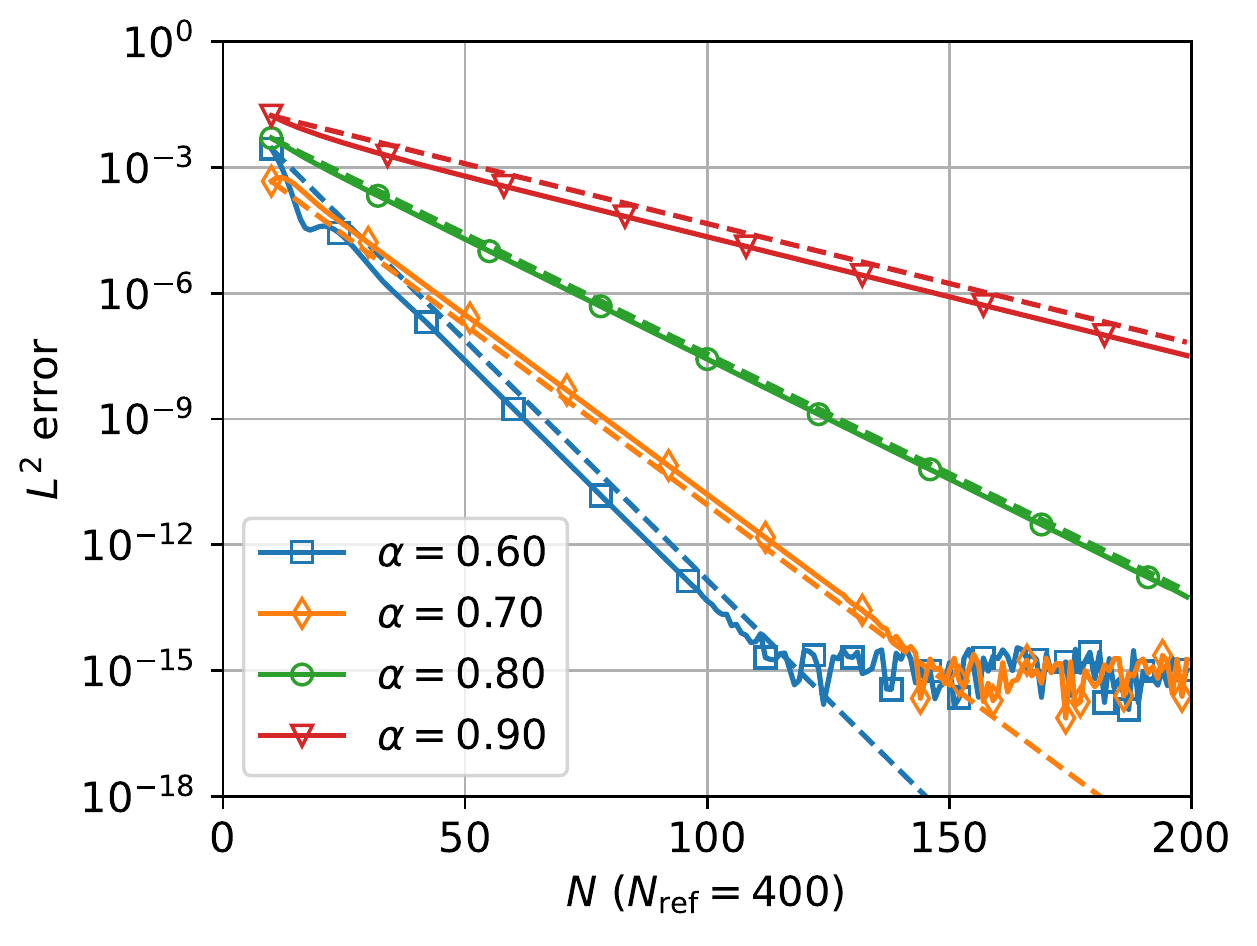}}
    \subfigure[\bf $f=f_3$]{\includegraphics[width=.32\textwidth]{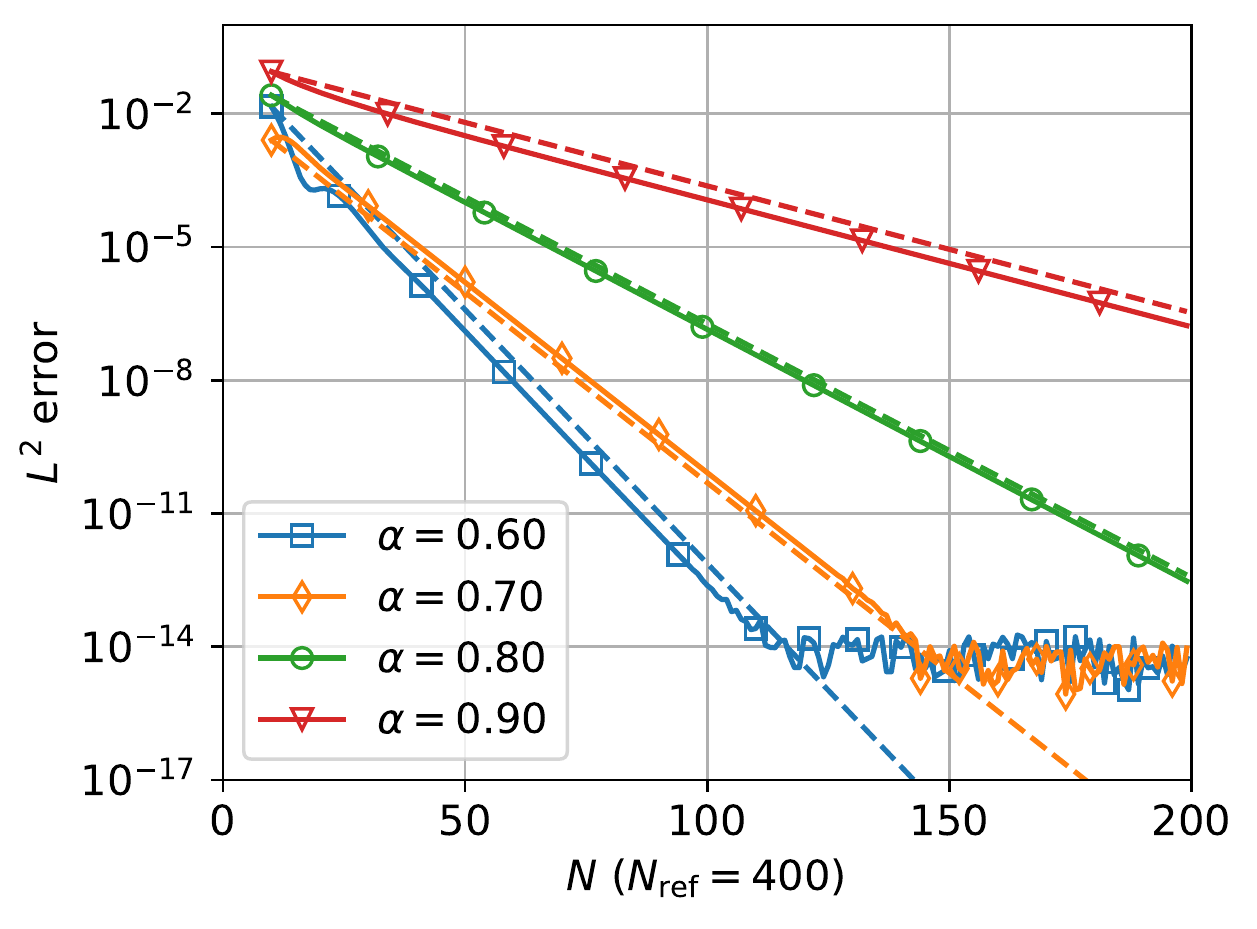}}
    \caption{Quadrature errors for $\cA_1$ under different $f$. The dashed lines are $\mathcal{O}(e^{-\frac{2\pi\min(\kappa_1,\kappa_2)}{\tau}})$ with $\kappa_1=\alpha\pi$, $\kappa_2=(1-\alpha)\pi$.}\label{fig:quad-laplace-split}
\end{figure}
\begin{figure}
    \centering
    \subfigure[\bf $f=f_1$]{\includegraphics[width=.32\textwidth]{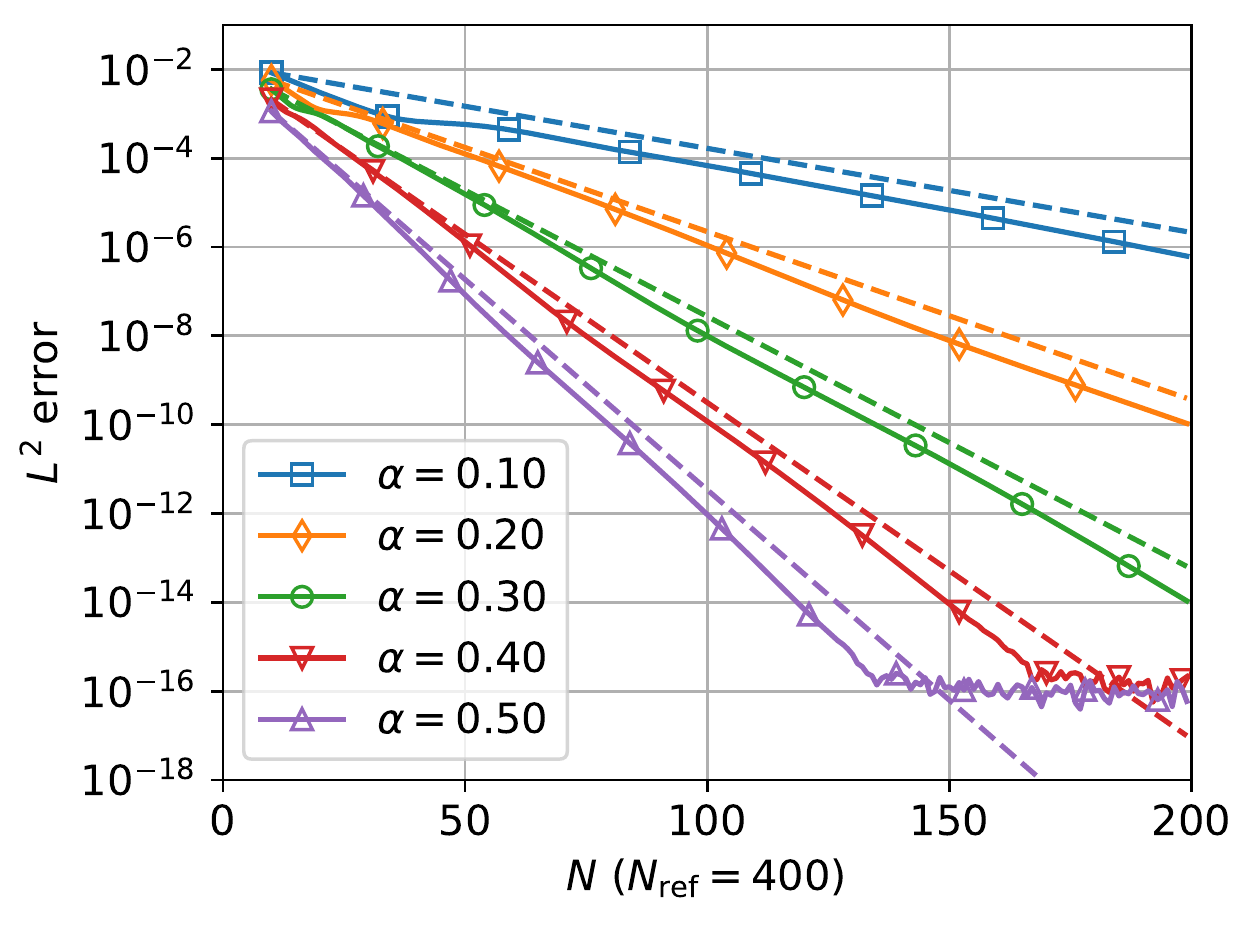}}
    \subfigure[\bf $f=f_2$]{\includegraphics[width=.32\textwidth]{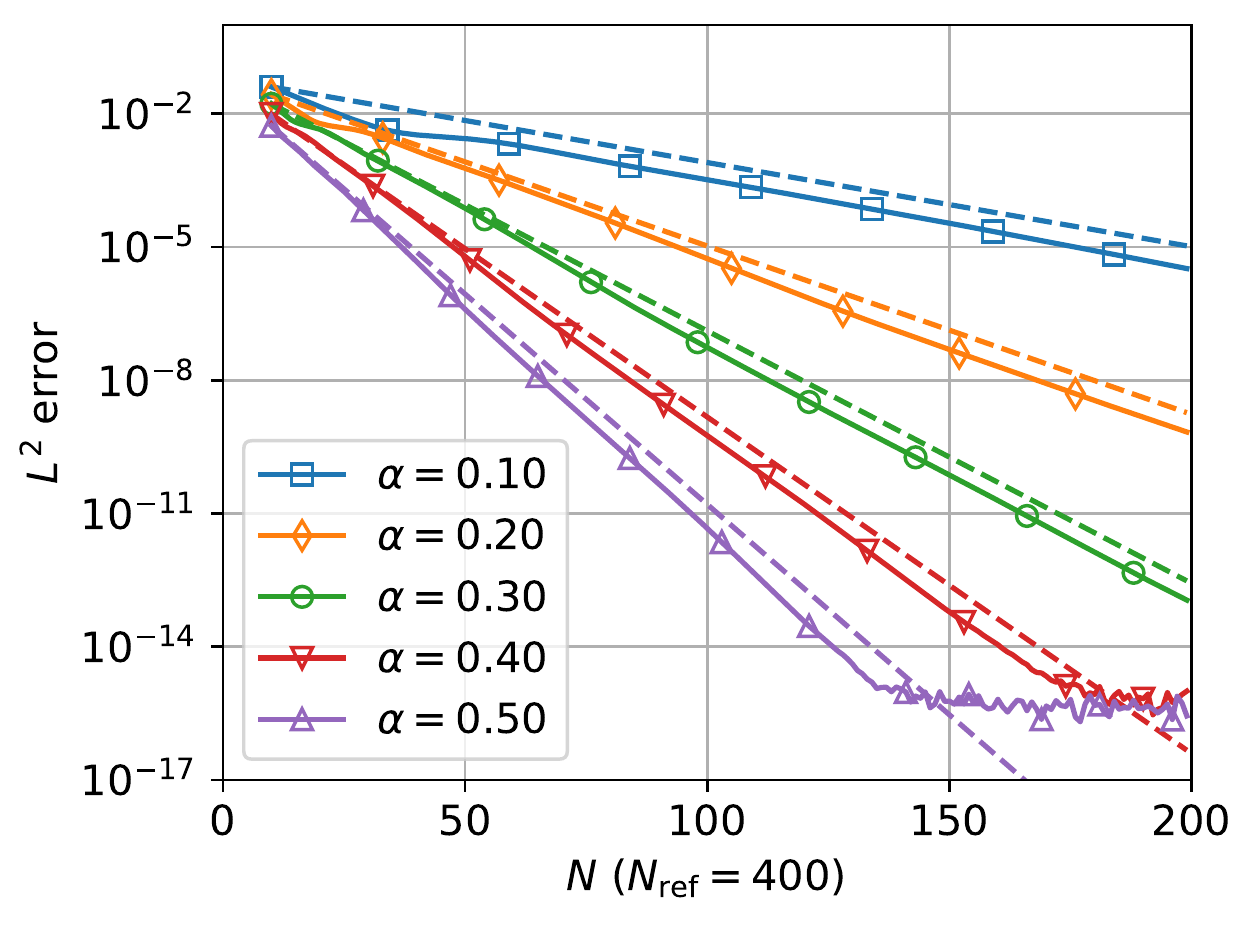}}
    \subfigure[\bf $f=f_3$]{\includegraphics[width=.32\textwidth]{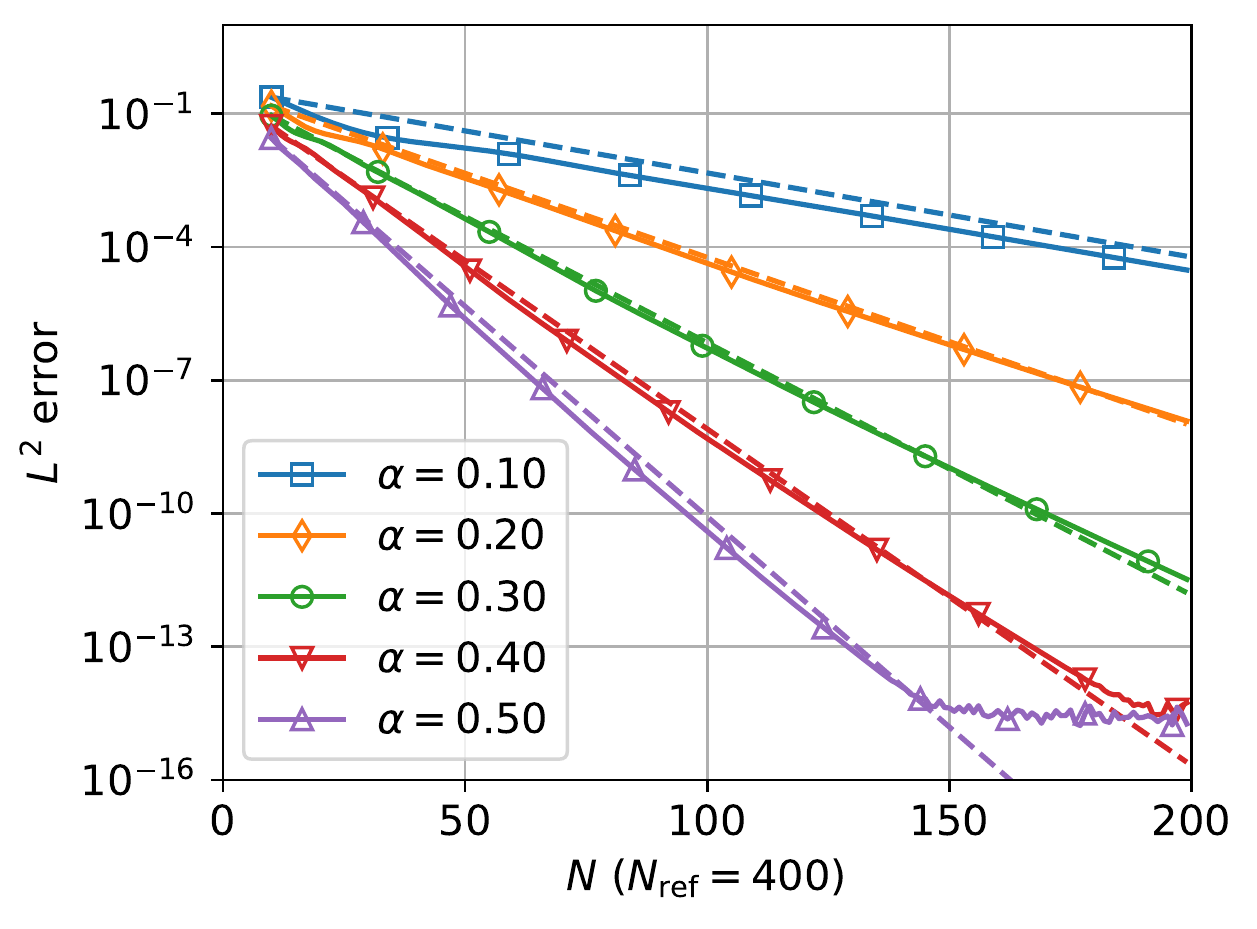}}  \\
    \subfigure[\bf $f=f_1$]{\includegraphics[width=.32\textwidth]{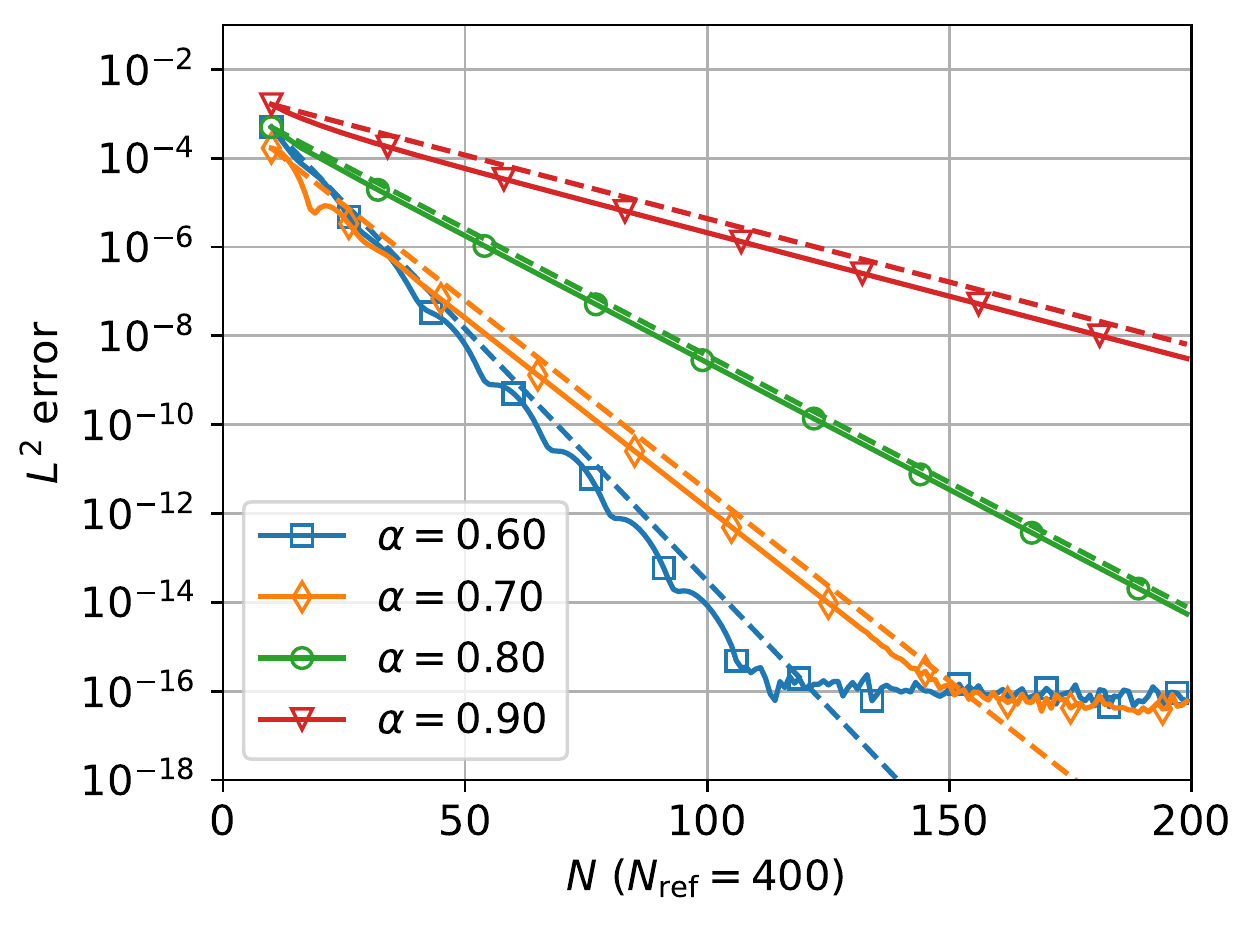}} 
    \subfigure[\bf $f=f_2$]{\includegraphics[width=.32\textwidth]{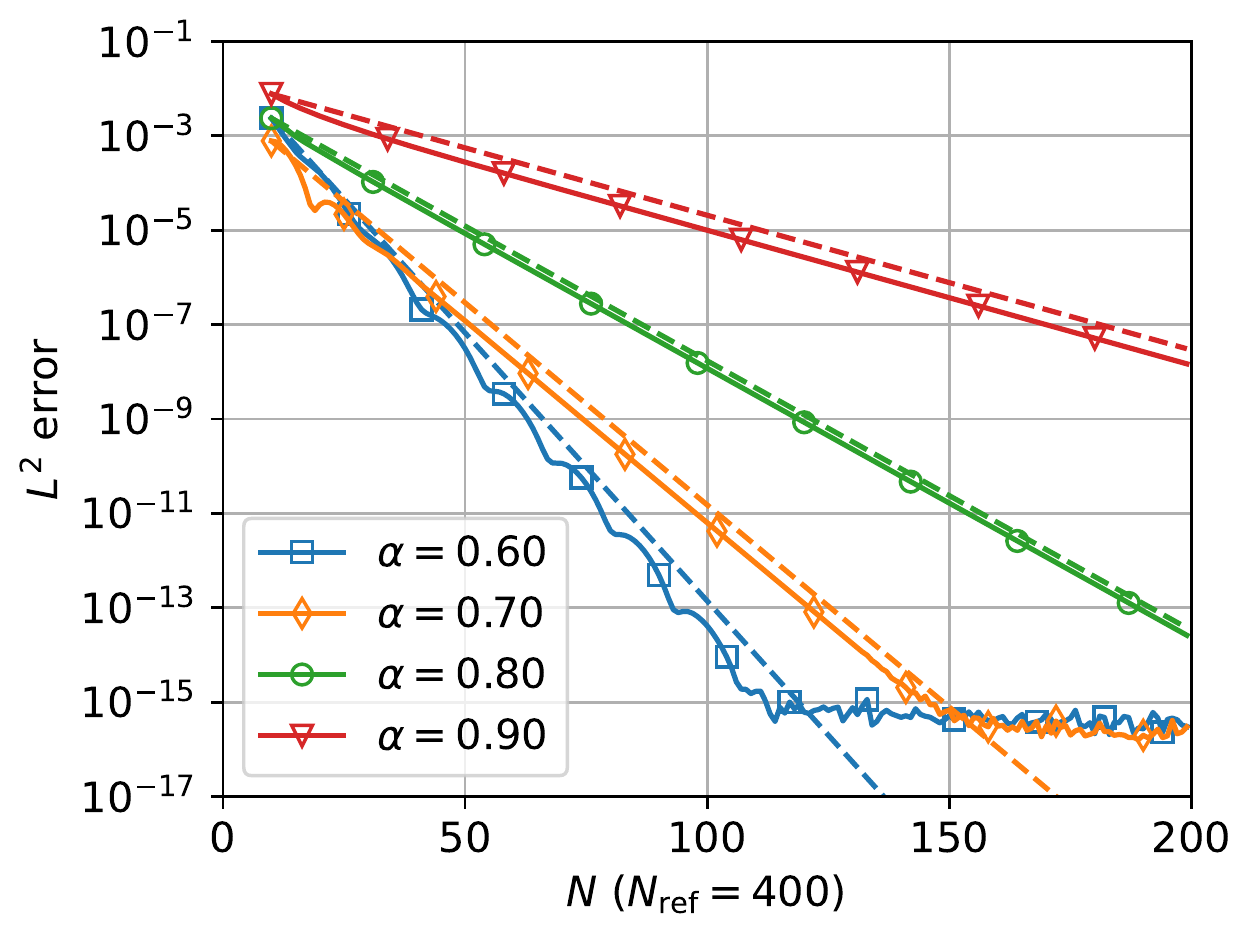}}
    \subfigure[\bf $f=f_3$]{\includegraphics[width=.32\textwidth]{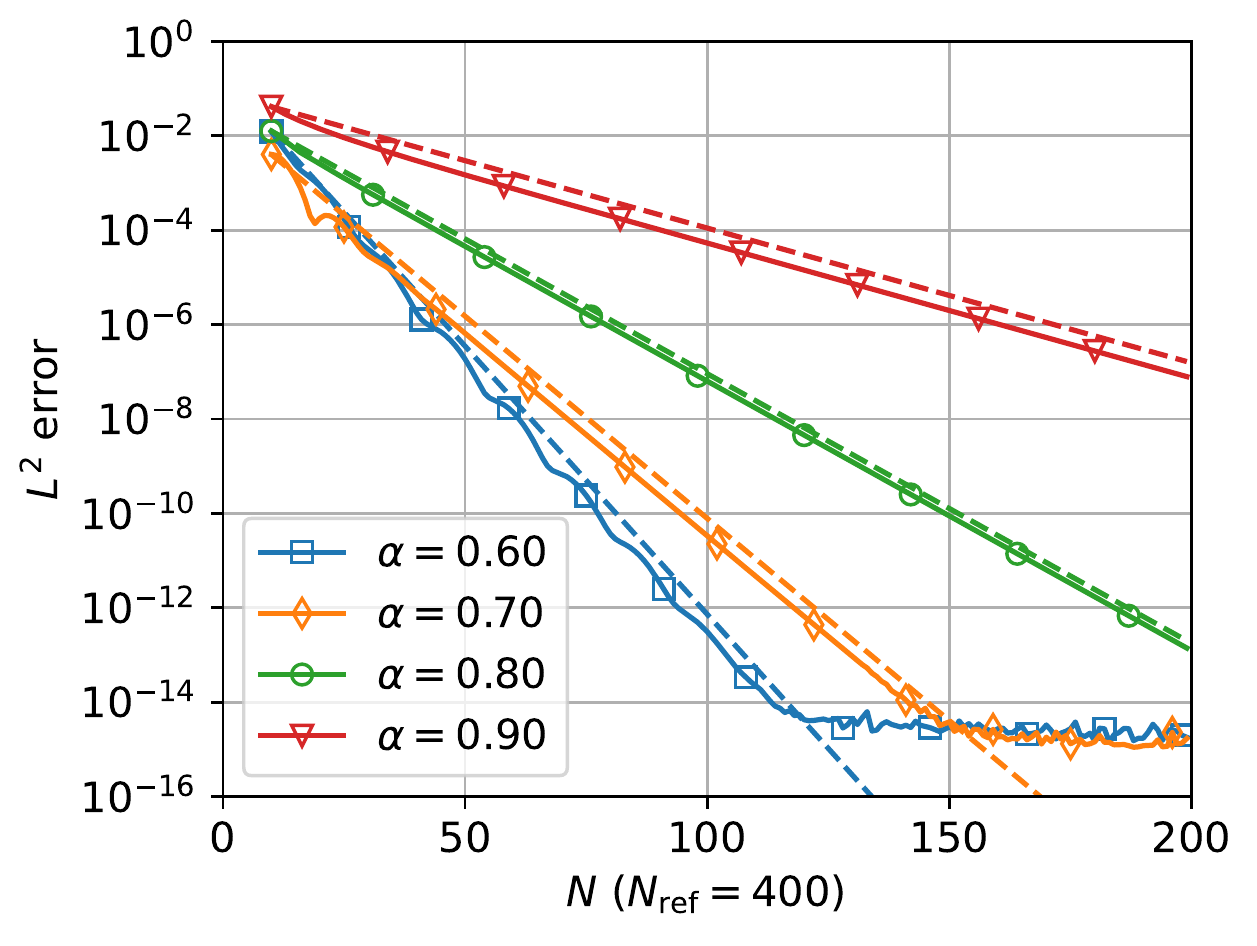}}
    \caption{Quadrature errors for $\cA_3$ under different $f$.  The dashed lines are $\mathcal{O}(e^{-\frac{2\pi\min(\kappa_1,\kappa_2)}{\tau}})$ with $\kappa_1\approx\alpha(\pi-1.059)$, $\kappa_2=(1-\alpha)\pi$.}\label{fig:quad-gco-split}
\end{figure}

\subsection{Application in solving time dependent problem}
As the last example, we consider the following space-fractional Allen-Cahn equation(see e.g., \cite{burrage2012efficient,NEC20083237,Song2016})
\begin{equation}
    \frac{\partial u}{\partial t} + \epsilon^2 (-\Delta)^\alpha u = F'(u), \quad \mbox{with } F(u) = \frac{1}{4}(1-u^2)^2.
\end{equation}
The equation above can be taken as the $L^2$-gradient flow under the free energy functional  $$\mathcal{E}[u]=\int_{\Omega}\frac{\epsilon^2}{2}|(-\Delta)^{\alpha/2}u|^2+F(u)dx.$$
We solve the problem in the periodic domain $(0, 2\pi)\times(0, 2\pi)$ under $\epsilon=0.1$ with the random initial 
data $u(x_1, x_2, 0) = 0.1\times{\texttt{rand}}(x_1, x_2) - 0.05$,  where the function $\texttt{rand}$ returns a random scalar in $[0, 1)$ sampled from the uniform distribution. The space and time step sizes are set to $h=\frac{2\pi}{128}$ and $\Delta t = \frac{1}{128}$, respectively. 
We determine the values of the parameters
$\tau$, $M$ and $N$ by \cref{choice-para} and the following tolerance
\begin{equation}
\frac{C(\tau)}{b} e^{-\sqrt{\pi\min(\kappa_1,\kappa_2)\left[(1+\alpha^{-1})M+N\right]}} =: {\rm tol} < 10^{-10},
\end{equation}
where $b=\frac{1}{\Delta t \epsilon^2}$ and $\kappa_1 = \alpha\pi, \kappa_2 = (1-\alpha)\pi$.
For $\alpha = 0.6$, we choose $(\tau, M, N) = (0.5, 5, 51)$,
and for $\alpha = 0.8$, we choose $(\tau, M, N) = (0.25, 12, 101)$.
One can observe in \cref{fig:allen-cahn-comp} that reducing the fractional power leads to slower merging process and thinner interface. 
This is consistent with the results reported in the previous literature \cite{burrage2012efficient}.

\begin{figure}[h]
    \centering
    \begin{tabular}{ccccc}
     \includegraphics[trim={320, 80, 320, 80}, clip, width=.2\textwidth]{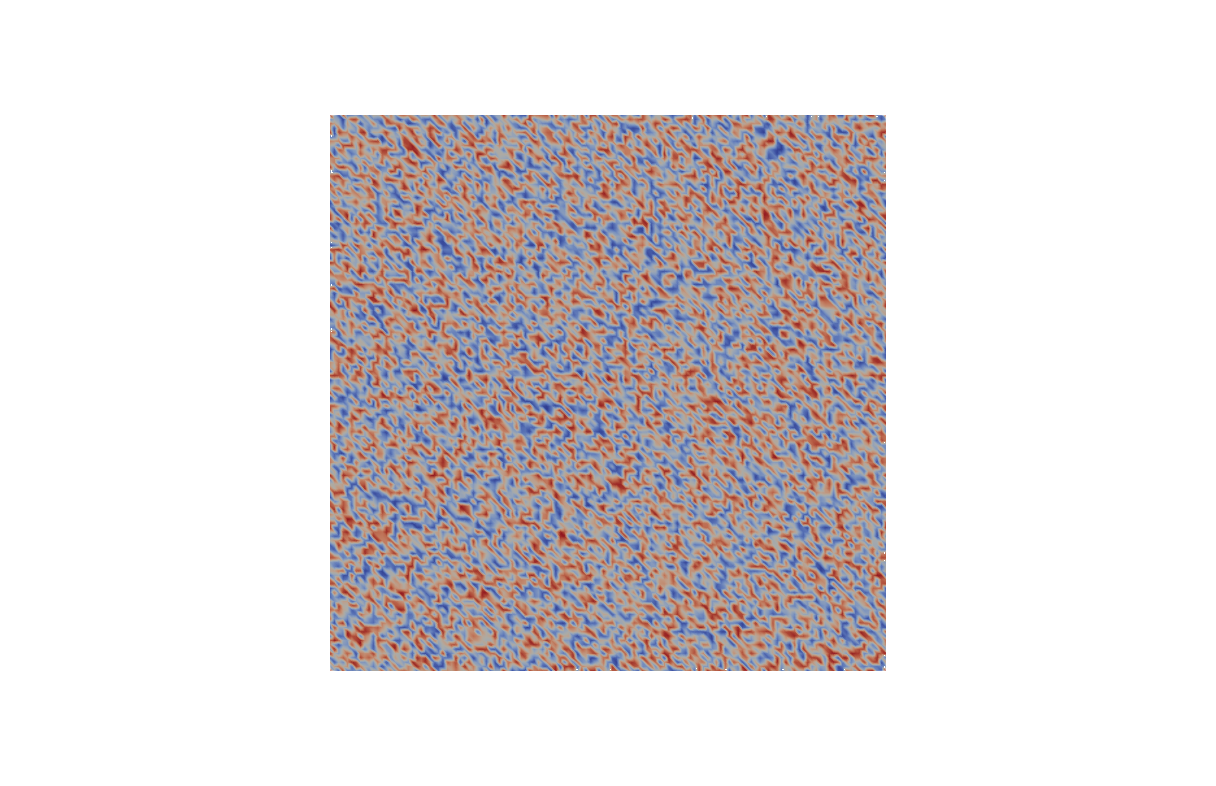}
    &\includegraphics[trim={320, 80, 320, 80}, clip, width=.2\textwidth]{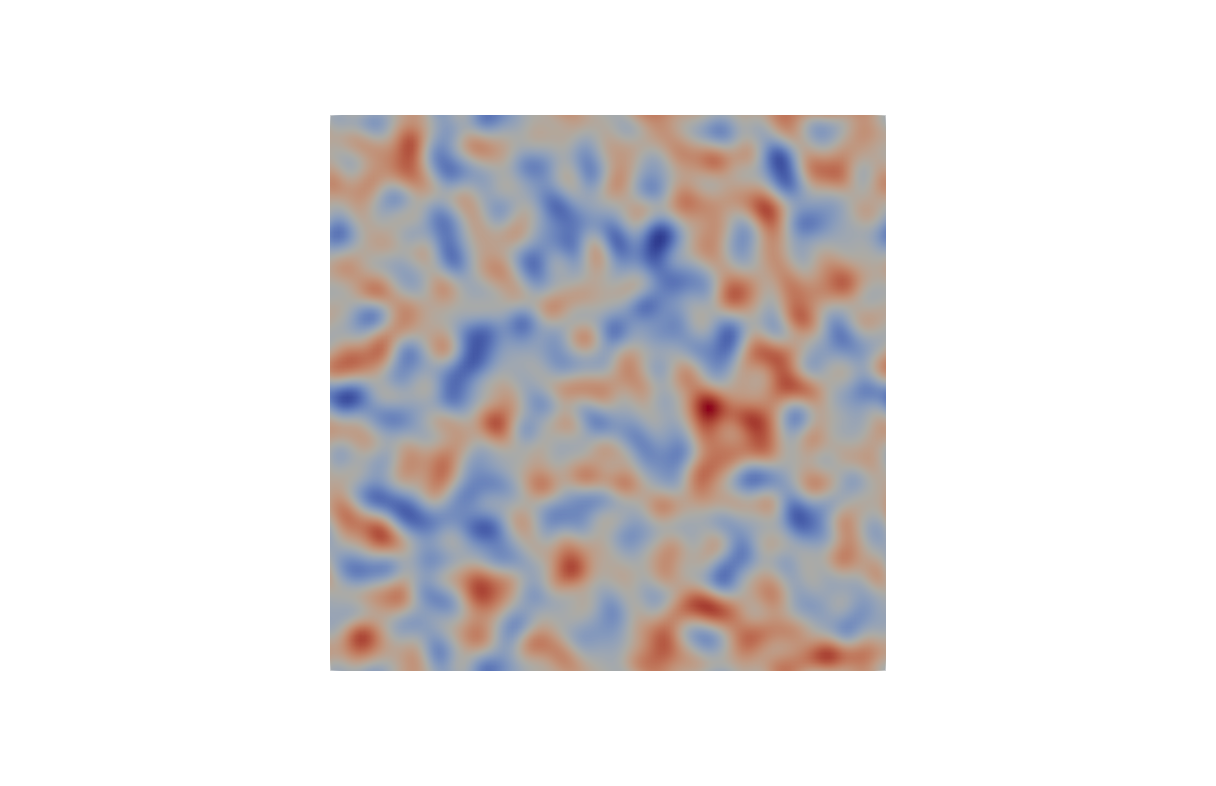}
    &\includegraphics[trim={320, 80, 320, 80}, clip, width=.2\textwidth]{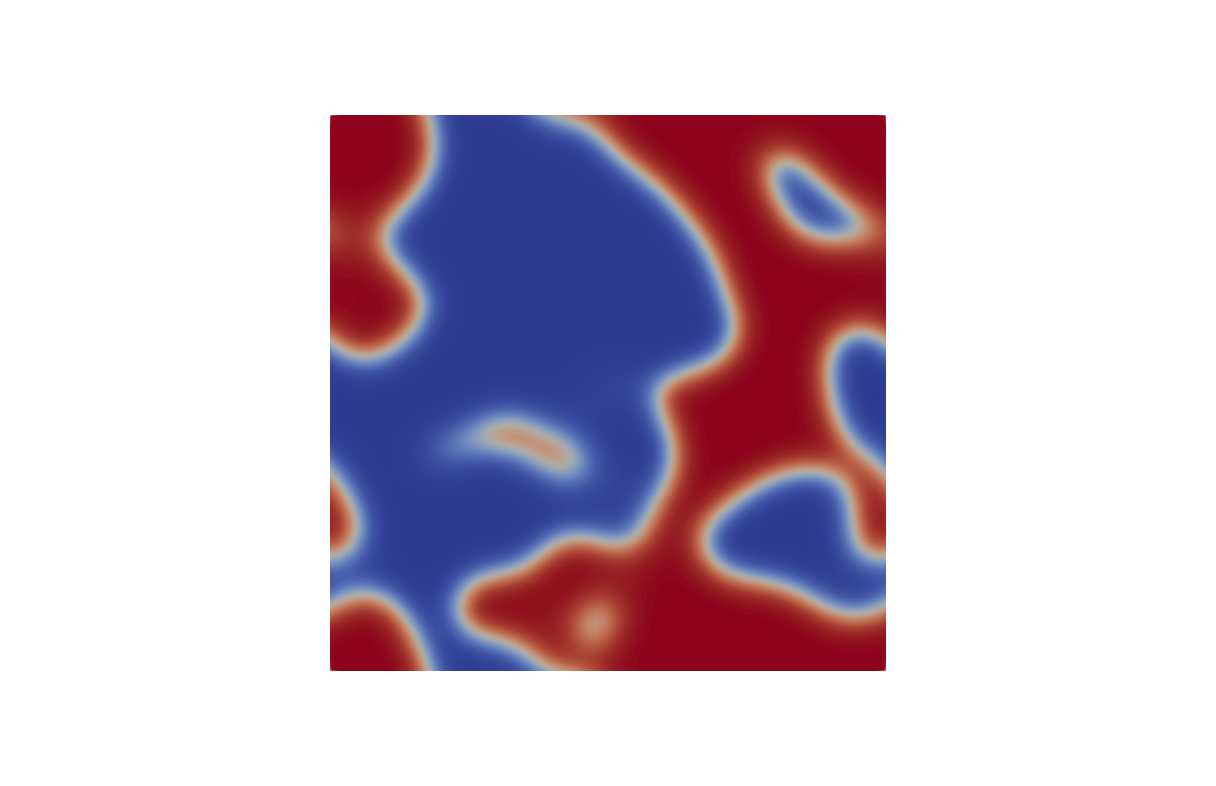}
    &\includegraphics[trim={320, 80, 320, 80}, clip, width=.2\textwidth]{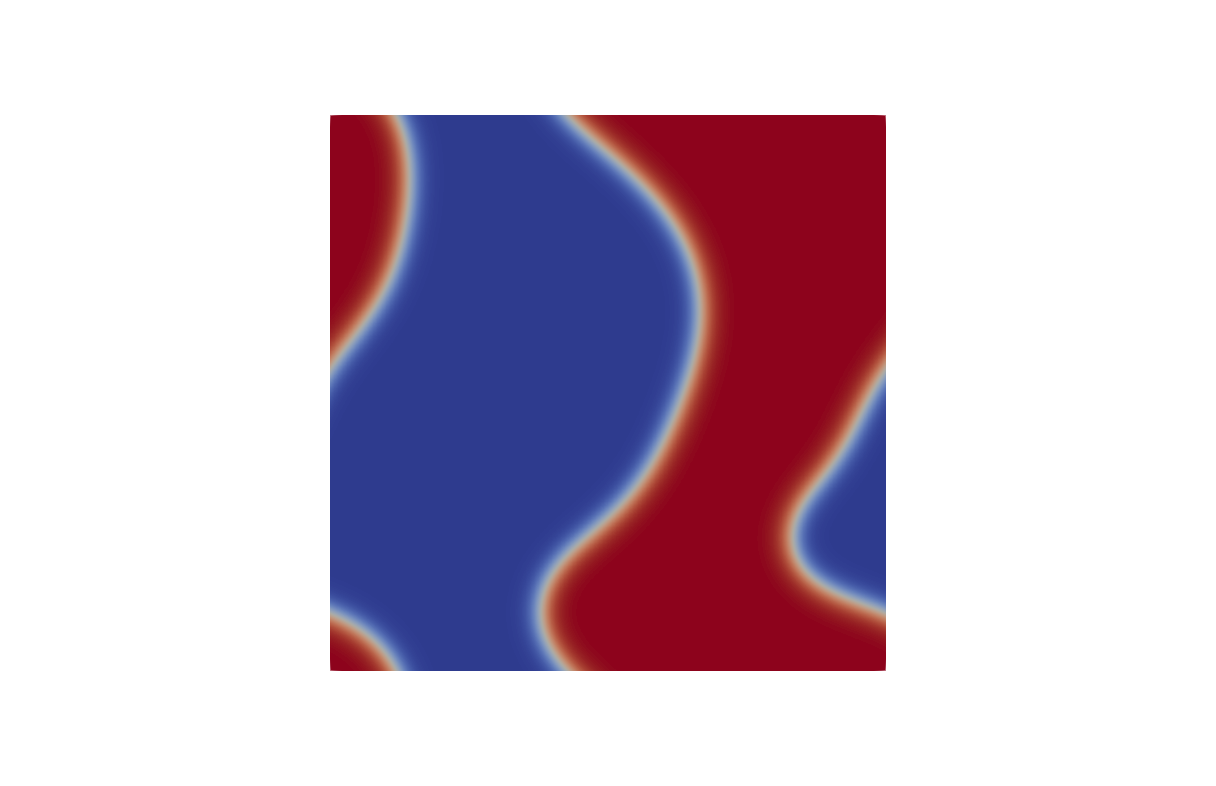} \\
     \includegraphics[trim={320, 80, 320, 80}, clip, width=.2\textwidth]{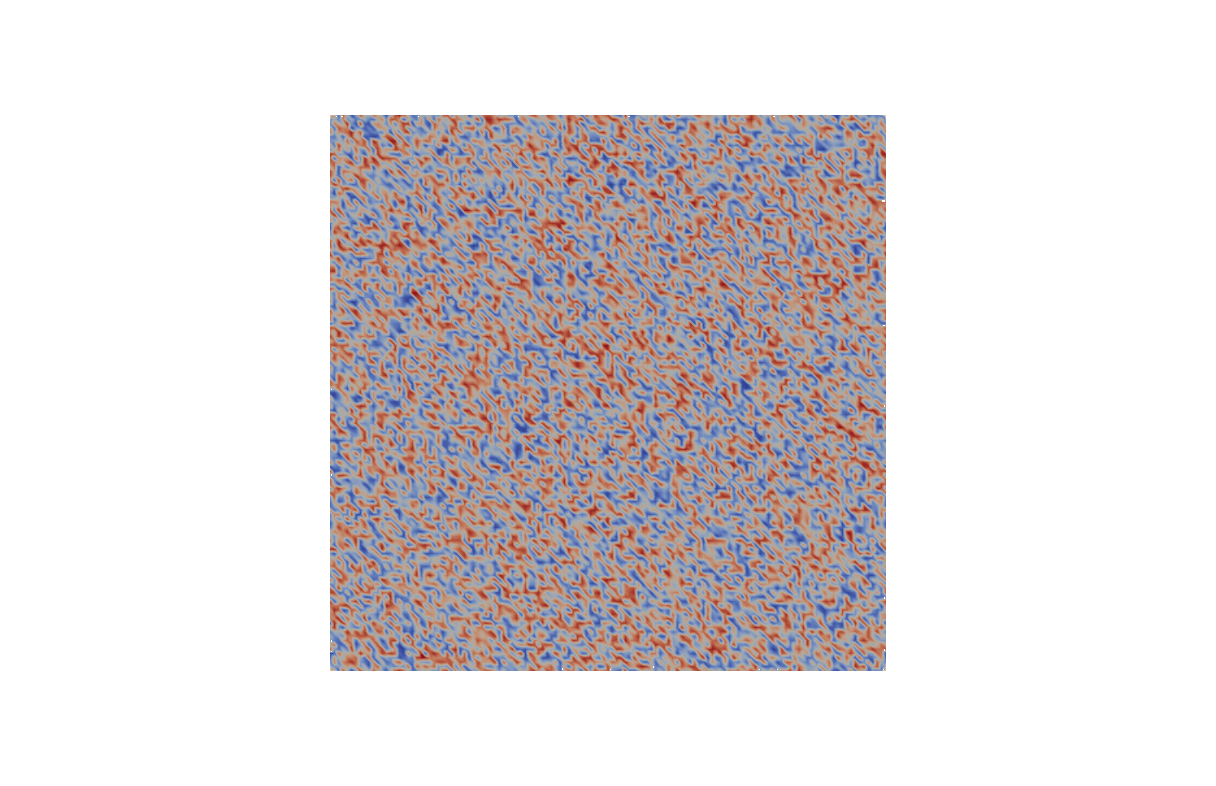}
    &\includegraphics[trim={320, 80, 320, 80}, clip, width=.2\textwidth]{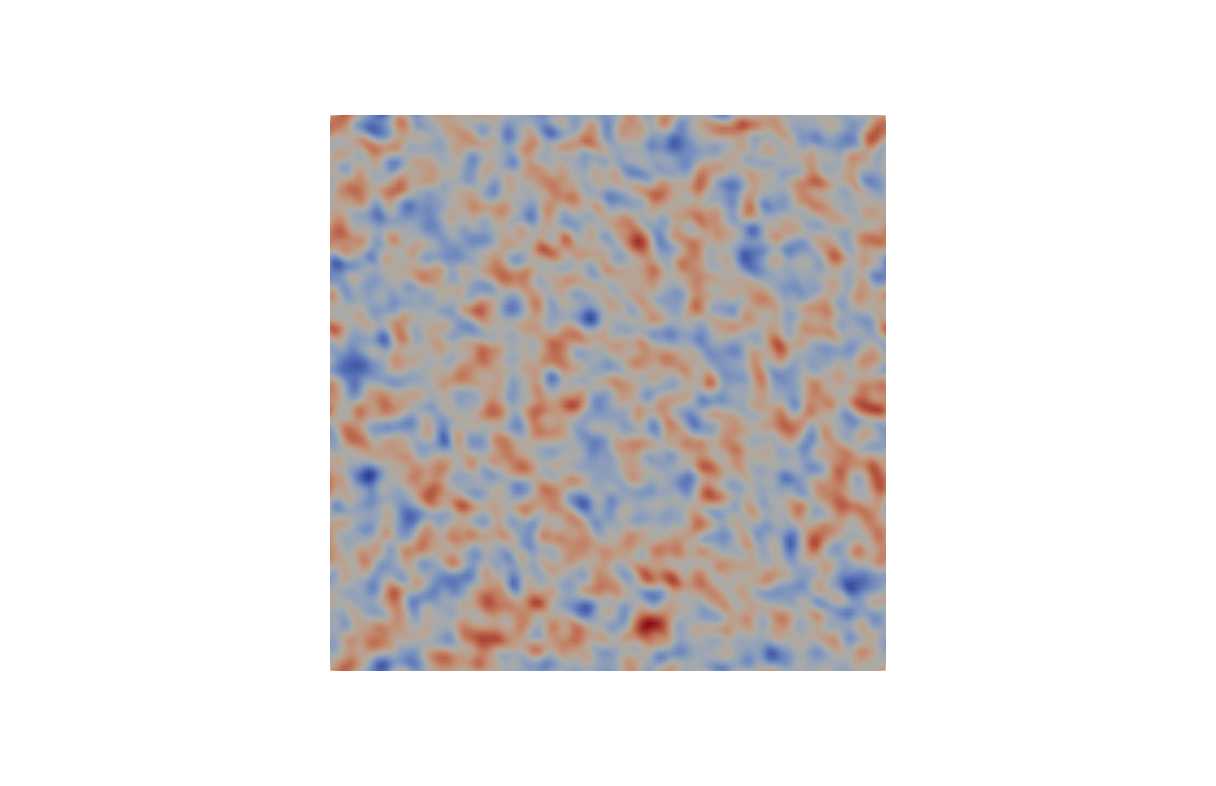}
    &\includegraphics[trim={320, 80, 320, 80}, clip, width=.2\textwidth]{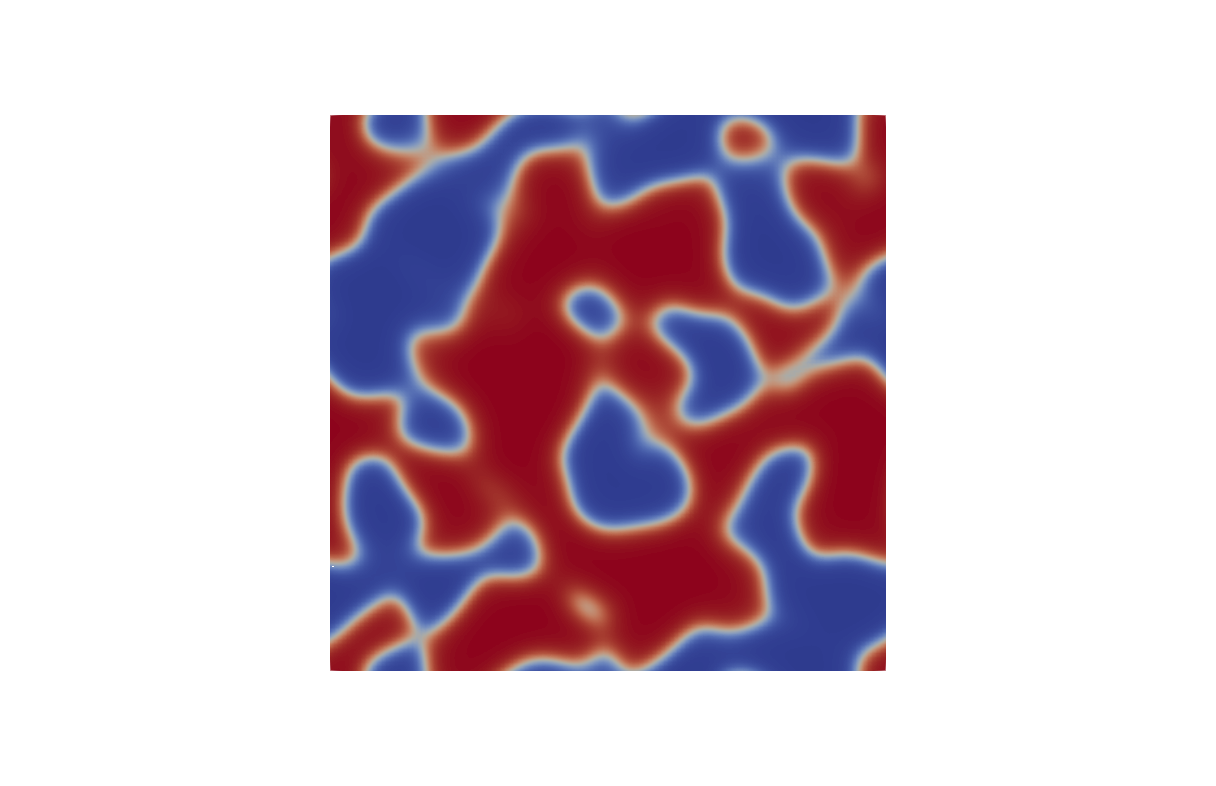}
    &\includegraphics[trim={320, 80, 320, 80}, clip, width=.2\textwidth]{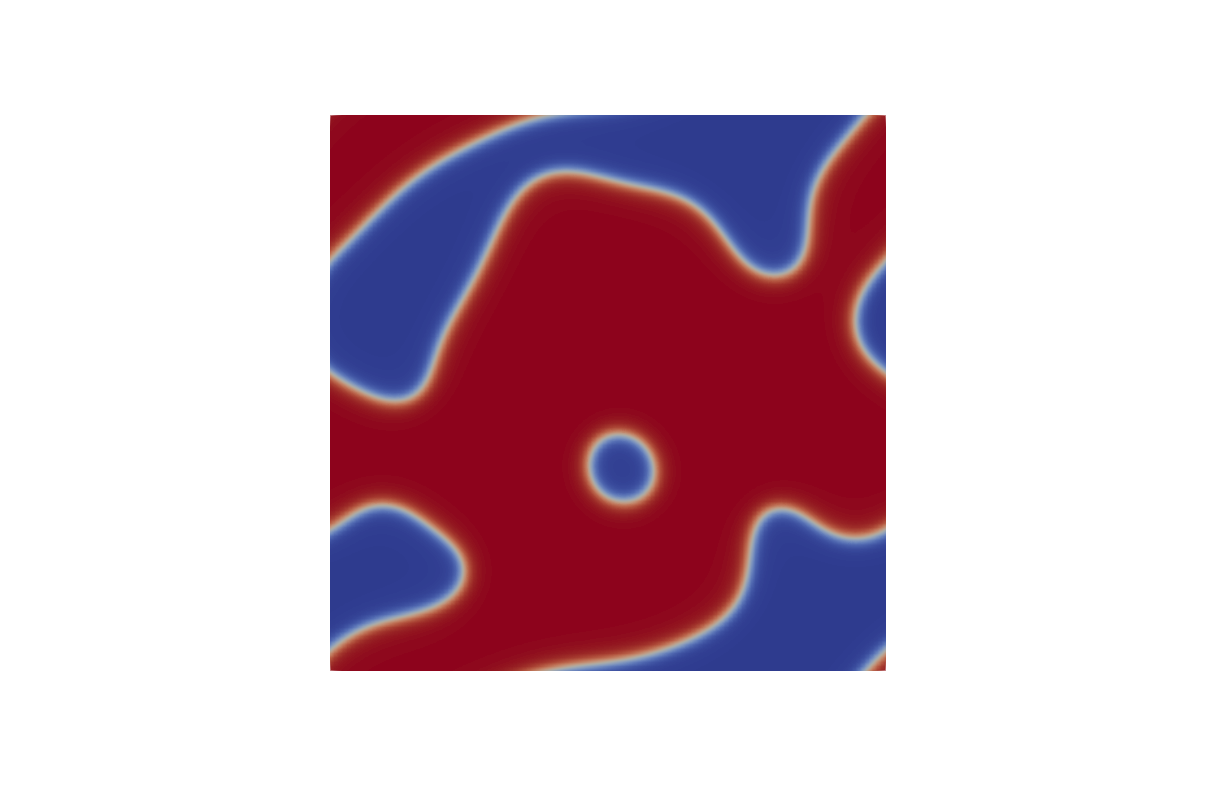} \\
     \includegraphics[trim={320, 80, 320, 80}, clip, width=.2\textwidth]{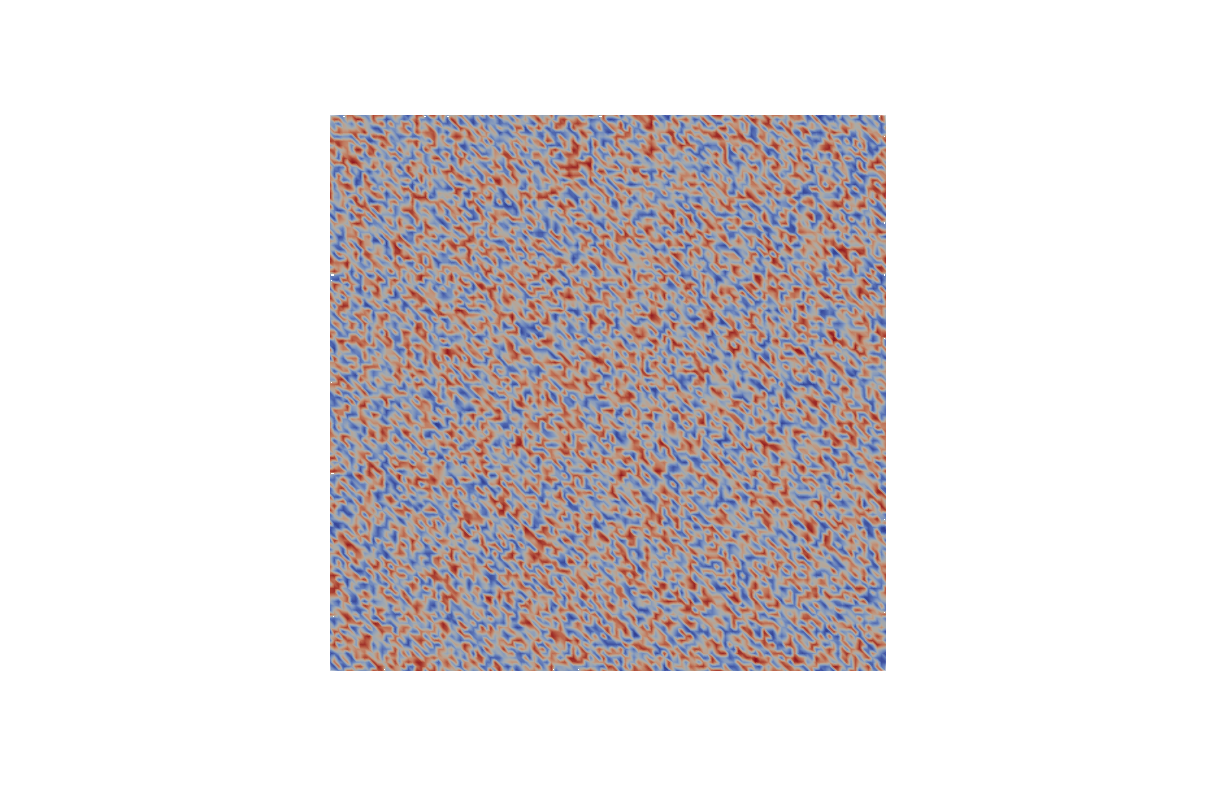}
    &\includegraphics[trim={320, 80, 320, 80}, clip, width=.2\textwidth]{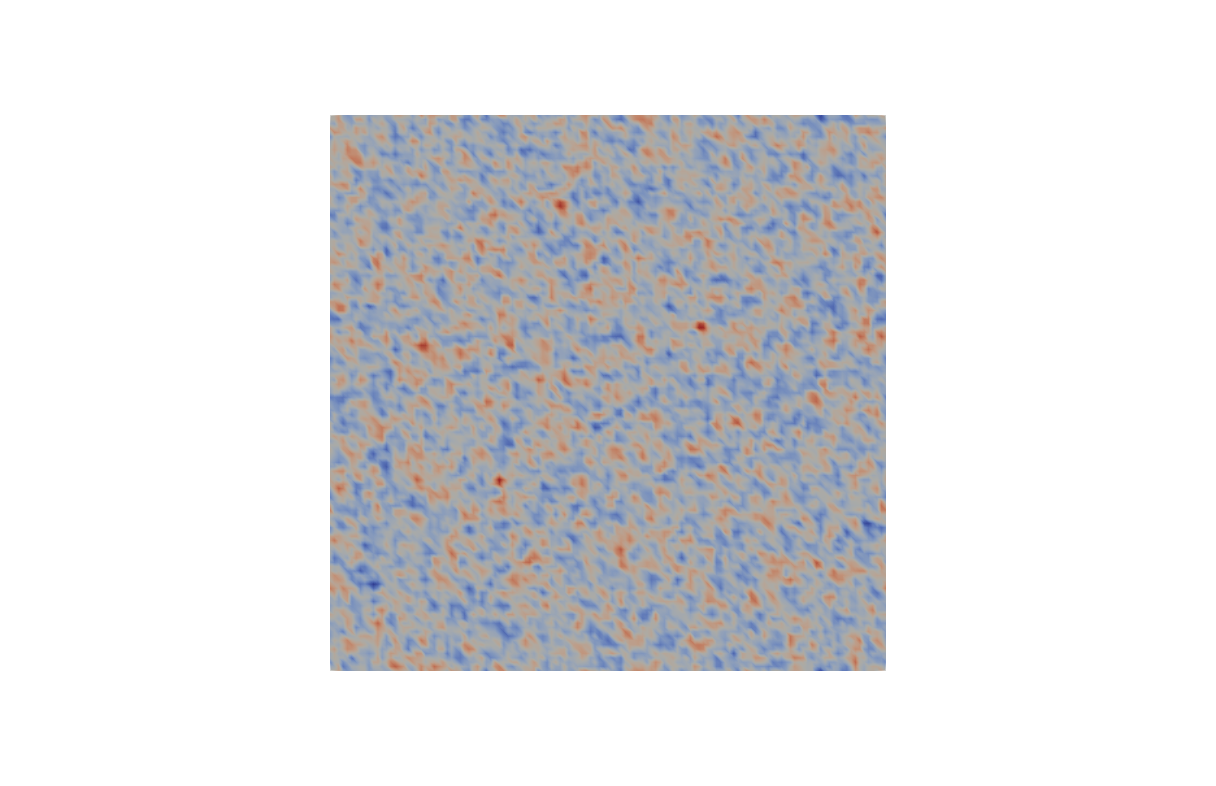}
    &\includegraphics[trim={320, 80, 320, 80}, clip, width=.2\textwidth]{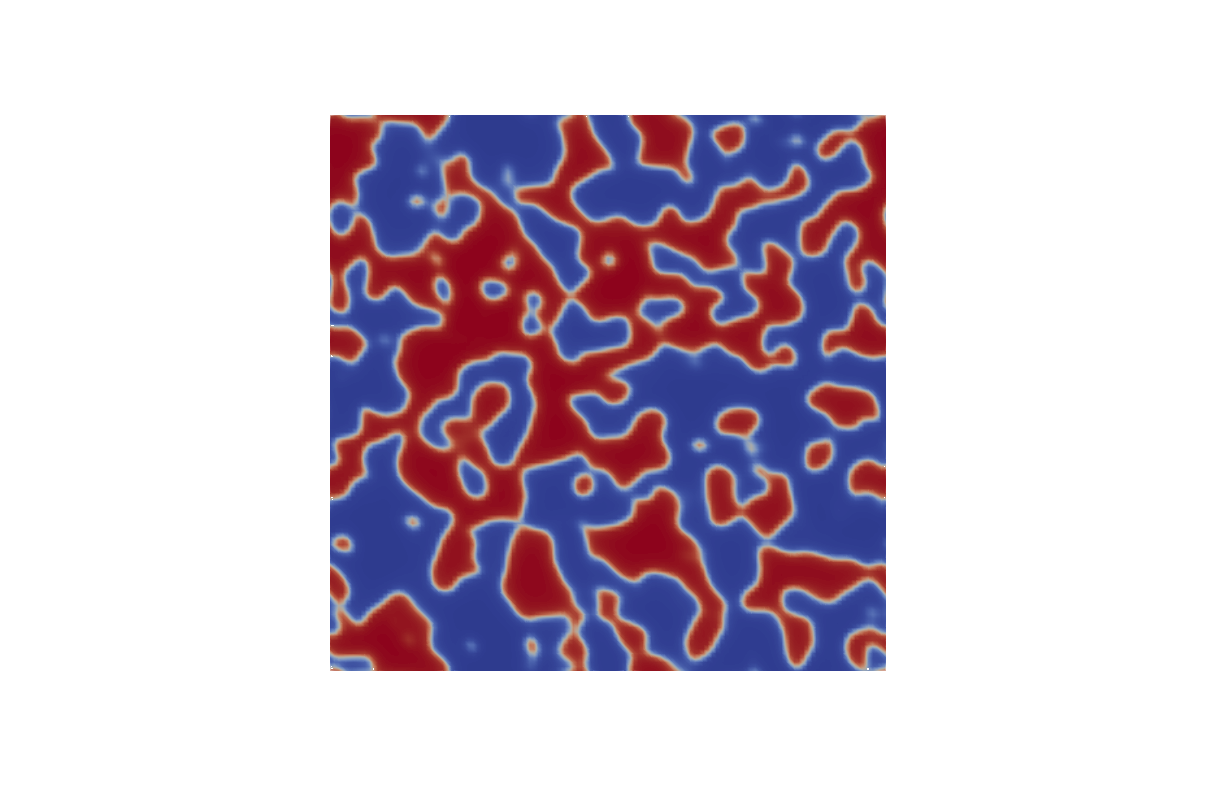}
    &\includegraphics[trim={320, 80, 320, 80}, clip, width=.2\textwidth]{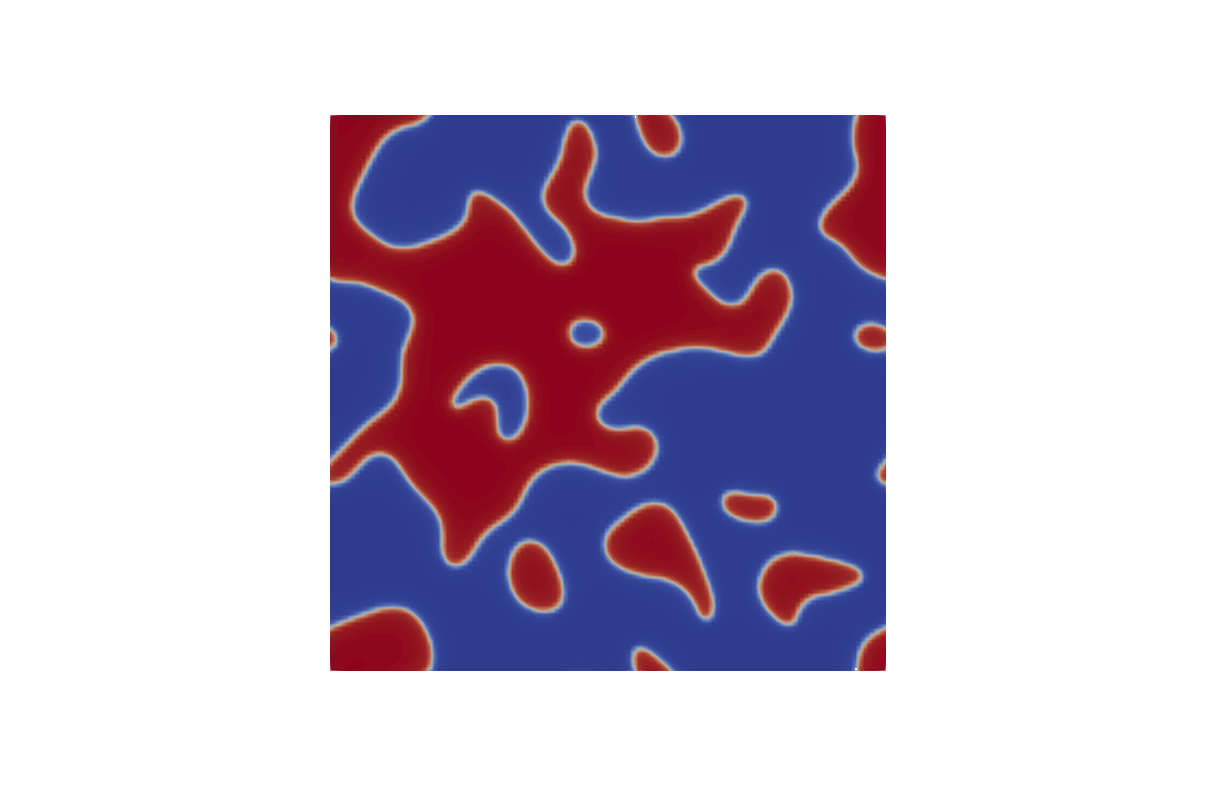} \\
      $t = 0$ & $t=1$ & $t=10$ & $t=50$
    \end{tabular}
    \caption{Allen-Cahn diffusion under different $\alpha$. Plots from top to bottom show the states for $\alpha=1,\, 0.8$ and $0.6$.}\label{fig:allen-cahn-comp}
\end{figure}

\section{Conclusion}
In this paper we have presented a quadrature scheme to numerically solve steady state diffusion equations involving fractional power of regularly accretive operator. Fully discrete error estimates have been demonstrated which show  that the quadrature error decays exponentially with respect to the step size $\tau$. The cost of the scheme is in solving some elliptic problems that can be executed in parallel. By balancing the truncation errors and quadrature error, the scheme is root-exponentially convergent with respect to the number of solves.  

\section*{Acknowledgements}
The authors would like to thank the anonymous referees for their valuable comments and suggestions.
The work of Beiping Duan was supported by National Natural Science Foundation of China(grant No. 12201418).
The work of Zongze Yang was supported in part by a grant from the Research Grants Council of the Hong Kong Special Administrative Region,
China (GRF Project No. PolyU15300920), and an internal grant of The Hong Kong Polytechnic University (Project ID: P0036728, Work Programme: W18K).

\bibliographystyle{siamplain}
\bibliography{references}
\end{document}